
\documentclass[reqno,12pt]{amsart}
\usepackage[latin1]{inputenc}
\usepackage{amsmath,amssymb,amsthm,ascmac}
\usepackage[mathscr,mathcal]{eucal}
\usepackage{latexsym}
\usepackage{enumerate}
\usepackage[dvips]{graphicx}
\usepackage{mathrsfs}
\usepackage{here}
\usepackage{comment}
\usepackage{mathtools}
\usepackage{nccmath}
\usepackage{amsfonts}
\usepackage[margin=35truemm]{geometry}
\usepackage{xcolor}
\usepackage{pdfpages}
\usepackage{caption}

\allowdisplaybreaks

\topmargin=-5mm
\textheight=22cm

\addtolength{\footskip}{10mm}

\theoremstyle{definition}
\newtheorem{thm}{Theorem}[section]

\newtheorem{defi}[thm]{Definition}
\newtheorem{cor}[thm]{Corollary}
\newtheorem{lem}[thm]{Lemma}
\newtheorem{remark}[thm]{Remark}

\newtheorem{ex}[thm]{Example}


\def\R{\mathbb R}
\def\Z{\mathbb Z}
\def\N{\mathbb N}

\def\e{\varepsilon}
\def\coloneq{\mathrel{\mathop:}=}
\def\k{\textcolor{black}{\kappa}}

\renewcommand{\theequation}{\arabic{section}.\arabic{equation}}
\makeatletter
\@addtoreset{equation}{section}


\title[Weak Gibbs measures for $(\k/\beta, \beta)$-shifts]{\large Weak Gibbs measures for the natural extension of $(\k/\beta, \beta)$-shifts\\}
\author[M. YAMASHITA]{MIKI YAMASHITA}
\date{}

\begin{document}

\maketitle

\pagestyle{headings}
\thispagestyle{empty}

\def\bx{\boldsymbol{x}}
\def\by{\boldsymbol{y}}
\def\bz{\boldsymbol{z}}
\def\bp{\boldsymbol{p}}
\def\bq{\boldsymbol{q}}
\def\bj{\boldsymbol{j}}

\def\a{\alpha} 
\def\b{\beta} 
\def\A{\mathcal{A}} 
\def\s{\sigma}
\def\[{\lbrack} 
\def\]{\rbrack}
\def\Xab{X^{\alpha, \beta}} 
\def\Sab{\Sigma^{\alpha, \beta}} 
\def\Lab{\mathcal{L}^{\alpha, \beta}} 
\def\Tab{T_{\alpha, \beta}} 
\def\Pa{\mathcal{P}^a} 
\def\Pb{\mathcal{P}^b} 
\def\P{\mathcal{P}} 
\def\pth{\textsf{pth}} 
\def\vtx{\textsf{vtx}} 
\def\Zab{z^{\a, \b}} 
\def\M{2\Zab(b_1)+7} 
\def\Gab{\mathcal{G}^{\a, \b}} 
\def\Vab{\mathcal{V}^{\a, \b}}
\def\Eab{\mathcal{E}^{\a, \b}}
\def\Ea{\textbf{E1}\,\,}
\def\Eb{\textbf{E2}\,\,}
\def\Ec{\textbf{E3}\,\,}
\def\Ed{\textbf{E4}\,\,}
\def\B{\mathcal{B}}

\noindent\textit{Abstract.} 
In this paper we consider the weak Gibbs measures for $(\a, \b)$-shifts. 
In the case of $\a=0$, Pfister and Sullivan have given a necessary and sufficient condition on $\b$ such that any equilibrium measure for a function of bounded total oscillations is a weak Gibbs measure in the natural extension of a $\b$-shift \cite{PS1}. 
So it is natural to ask what happens when $\a>0$. 
However, their proof cannot be applied to general $(\a, \b)$-shifts in a similar way. 
In this paper we consider the case of $\a=\textcolor{black}{\k}/\b$ and give a criterion for the weak Gibbs property of equilibrium measures for $(\textcolor{black}{\k}/\b, \b)$-shifts.\\


\noindent\textit{Key words.} $(\a, \b)$-shifts, symbolic dynamics, equilibrium measures, weak Gibbs measures. 

\section{introduction}
In this paper let us consider the equilibrium measures for the natural extension of $(\a, \b)$-shifts. 
Let $0\le\a<1 , \b >1$ fixed. 
For $t\in\R$, let $\lfloor t\rfloor\coloneq\max\{i\in\Z:i\le t\}$.
The map $\Tab:[0,1)\to[0,1)$ is defined by
\begin{align}\label{Tab}
\Tab(x)=\b x+\a -\lfloor\b x+\a\rfloor=\b x+\a\,(\!\!\!\!\!\mod 1).
\end{align}
Such maps are called {\it$(\a, \b)$-transformations}. 
We define the $(\a, \b)$-shift $\Xab$ by the closure of all $(\a, \b)$-expansions obtained by $\Tab$ 
and denote as $\Sab$ the natural extension of $\Xab$.
For more than half a century dynamical properties of $\Xab$ and $\Sab$ have been investigated in many works \cite{CLR,FP,RS,Ho2,CY,Re,Pa1,Pa2}.

Haydn and Ruelle \cite{HR} proved that if an expansive homeomorphism satisfies specification, then the equivalence of equilibrium measures and Gibbs measures holds under a certain condition of potential functions.
However it is known that for $\a\in[0, 1)$ the set of all $\b>1$ for which $\Xab$ satisfies the specification has Lebesgue measure $0$ \cite{Sc, Bu}.

Yuri introduced the notion of weak Gibbs measures for piecewise $C^0$-invertible systems with finite range structure \cite{Yu1, Yu2, Yu3}. 
Later several versions of weak Gibbs measures have been defined for other dynamical systems \cite{BV, PS2, PS3}.
Pfister and Sullivan showed that if $L$ is a lattice and $X\subset A^L$ is a shift space satisfying the decoupling condition and $\varphi$ has bounded total oscillations, then any equilibrium measure for $\varphi$ is a weak Gibbs measure for $\psi=\varphi-p(\varphi)$ \cite{PS3}. Carapezza et al. gave a characterization of specification for $\Xab$ \cite{CLR}. Using this characterization we can check that the decoupling condition implies the specification for $\Sab$.


Recently, Pfister and Sullivan have given a necessary and sufficient condition on $\b$ \textcolor{black}{for the case $\a=0$} such that any equilibrium measure for a function of bounded total oscillations is a weak Gibbs measure in the natural extension of a $\b$-shift \cite{PS1}. 
So it is natural to ask what happens when $\a>0$. 

Let $a=(a_i), b=(b_i)\in\N_0^\N$ be the $(\a, \b)$-expansions of $0$ and $1$, respectively. When $\a=0$ we have $a_i=0$ for $i\in\N$, and if an admissible word $w=w_1\cdots w_n0\cdots0$ satisfies $w_n\neq0$, then $\widehat{w}=w_1\cdots w_{n-1}(w_n-1)0\cdots0$ is also admissible and it can be followed by any words.
Moreover the number of the inverse image of the transformation $w\mapsto\widehat{w}$ is bounded from above (\cite[Lemma 2.2]{PS1}).
This property plays an essential role in the proof of Pfister and Sullivan.
In the general case, it is known that $a$ has the following structure;
$$a=(a_1,\cdots, a_{n_1}, b_1,\cdots, b_{m_1}, a_{n_1+m_1+1},\cdots, a_{n_2}, b_1,\cdots, b_{m_2},\cdots),$$
for some sequence $\{n_i\}$ and $\{m_i\}$ satisfying $n_{i+1}\ge n_i+m_i+1$ for all $i\in\N$, and each word  $b_1\cdots b_{m_i-k}$ for $1\le k\le m_i-1$ cannot be followed by other words (see \cite{CLR}). This means that it is extremely difficult to change an admissible word into a word which can be followed by any words.  

In this paper we consider the case when $\a=\textcolor{black}{\k}/\b$, that is, $a=(0, \textcolor{black}{\k, \k, \k,\cdots})$. Then we will show that by replacing at most three characters we can change any admissible word $w$ into a word $\widetilde{w}$ which can be followed by any words (see Remark~\ref{w^coor} and Lemma~\ref{PS2.2}). 
This enables us to give a necessary and sufficient condition on $\b$ such that any equilibrium measure for a function of bounded total oscillations is a weak Gibbs measure in the natural extension of $(\textcolor{black}{\k}/\b, \b)$-shift.

Our proof of the main theorem is based on the method developed  in \cite{PS1}. 
This paper is organized as follows. 

\color{black}
In \S\ref{S.pre}, we provide some definitions and notation. 
The main result is stated in \S\ref{S.main}.
In \S\ref{S.prop}, for each word $w$ we construct a transformation $w\mapsto\widetilde{w}$, which plays a key role in this paper. To investigate its properties, in \S\ref{S.graph}, we introduce a labeled graph that  describes the shift space $\Xab$.
Finally, we prove the main theorem in \S\ref{S.proof}.

\color{black}
\section{\textcolor{black}{Setting and main result}}
\subsection{$(\a, \b)$-shift}\label{S.pre}

Let $0\le\a<1, \b >1$ fixed and $\Tab$ be the transformation defined by \eqref{Tab}. 
For $t\in\R$, write $\lceil t\rceil\coloneq\min\{i\in\Z:i\ge t\}$.
Let $\lambda=\lceil\a+\b\rceil -1$ and $\A=\{0, 1,\cdots, \lambda\}$. We set the partition
$$J_0=\bigg[0, \frac{1-\a}{\b}\bigg), J_k=\bigg[\frac{k-\a}{\b}, \frac{k+1-\a}{\b}\bigg),\cdots, J_\lambda=\bigg[\frac{\lambda-\a}{\b}, 1\bigg)$$
of $[0, 1)$. For $x\in[0, 1)$, the sequence $i_{\a, \b}(x)\in\{0, 1,\cdots, \lambda\}^\N$ is defined by $i_{\a, \b}(x)_n=k$ if and only if $\Tab^{n-1}(x)\in J_k$ for all $n\in\N$. Then we have
\begin{align*}
x&=\frac{i_{\a, \b}(x)_1-\a}{\b}+\frac{i_{\a, \b}(x)_2-\a}{\b^2}+\cdots\\
&=\sum_{n=1}^\infty\frac{i_{\a, \b}(x)_n-\a}{\b^n}.
\end{align*}
The sequence $i_{\a, \b}(x)$ is called an {\it$(\a, \b)$-expansion} of $x$.
Now we define 
$$\Xab\coloneq\text{cl}\{i_{\a, \b}(x)\in \A^\N : x\in\[0, 1)\}$$
where cl$(X)$ is closure of $X$ in the product space $\A^\N$ of discrete sets $\A$. Let $\s:\A^\N\to\A^\N$ be the left shift operator. Then we have $\s(\Xab)\subset\Xab$. We call $(\Xab, \s)$ the {\it(one-sided)$(\a, \b)$-shift}.  

Let $\Lab$ be the language of the shift $\Xab$ and $\Lab_n$ be the set of the words of length $n$.
In this paper we write $\e$ as the empty word, and so $\Lab_0= \{\e\}$.  
A word is written as $w=w_1\cdots w_n$ and the length of a word $w$ is denoted by $|w|$.

\begin{defi}
The natural extension $\Sab$ of $\Xab$ is
$$\Sab=\{x\in \A^\Z:\text{for all}\,\,k\in\Z, (x_k, x_{k+1},\cdots)\in\Xab\}.$$
This is also called the {\it$(\a, \b)$-shift}. \textcolor{black}{The language of $\Sab$ is also $\Lab$. }
\end{defi}


 
For sequences $(x_1, x_2,\cdots)$ and $(y_1, y_2,\cdots)$ in $\A^\N$ the {\it lexicographical order} is defined by $(x_1, x_2,\cdots)\prec(y_1, y_2,\cdots)$ if and only if for the smallest index $i$ with $x_i\ne y_i, x_i<y_i$. 
It is well known that the lexicographical order and the expansion of the end points characterize the $(\a, \b)$-shift $\Xab$:
$$\Xab=\{x\in \A^\N:a\preceq \s^k x\preceq b\,\,\text{for all}\,\, k\in\Z_+\}$$
where $a=i_{\a, \b}(0)$ and $b=\lim\limits_{x\uparrow1}i_{\a, \b}(x)$. In particular, $a=(a_1, a_2,\cdots)$ and $b=(b_1, b_2,\cdots)$ satisfy that $a_1=0$, $b_1=\lambda$, $a\preceq \s^k a\preceq b$ and $a\preceq \s^k b\preceq b$ for all $k\in\Z_+$.

\begin{cor}\cite[Corollary 3.3]{CLR}\label{CLR3.3}
A word $w_1\cdots w_n$ is in $\Lab$ if and only if
$$a_1\cdots a_{n-k+1}\preceq w_k\cdots w_n \preceq b_1\cdots b_{n-k+1}$$
for all $k\in\N$.
\end{cor}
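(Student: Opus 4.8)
The statement to prove is Corollary~\ref{CLR3.3}, the lexicographic characterization of admissible words. Although the paper cites this as \cite[Corollary 3.3]{CLR}, I will sketch a self-contained proof from the lexicographic description of $\Xab$ stated just above it.

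\medskip

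\textbf{Plan of proof.} The plan is to derive the finite-word criterion from the known two-sided/one-sided characterization
$$\Xab=\{x\in\A^\N : a\preceq\s^k x\preceq b\ \text{for all}\ k\in\Z_+\},$$
together with the basic self-consistency properties $a\preceq\s^k a\preceq b$ and $a\preceq\s^k b\preceq b$ for all $k$. Recall that $w_1\cdots w_n\in\Lab$ means exactly that there exists some $x\in\Xab$ with $x_1\cdots x_n = w_1\cdots w_n$.

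First I would prove necessity, which is the easy direction. Suppose $w_1\cdots w_n\in\Lab$ and extend it to $x\in\Xab$. Fix $k\in\{1,\dots,n\}$ and set $m=n-k+1$. By definition of $\Xab$ we have $a\preceq\s^{k-1}x\preceq b$; comparing the first $m$ coordinates gives $a_1\cdots a_m\preceq w_k\cdots w_n\preceq b_1\cdots b_n$, \emph{except} that a lexicographic inequality on full sequences does not immediately restrict to an inequality on prefixes when the prefixes happen to be equal — but in that case the prefix inequality $a_1\cdots a_m\preceq w_k\cdots w_n$ holds with equality, so it is still true as a (non-strict) prefix comparison. Thus necessity follows directly.

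Second, and this is the main work, I would prove sufficiency: given a word $w=w_1\cdots w_n$ satisfying $a_1\cdots a_{n-k+1}\preceq w_k\cdots w_n\preceq b_1\cdots b_{n-k+1}$ for all $k\in\{1,\dots,n\}$, construct $x\in\Xab$ extending it. The natural idea is to append the tail of $b$: set $x = w_1\cdots w_n\, b_{n+1}b_{n+2}\cdots$ and verify $a\preceq\s^j x\preceq b$ for every $j\ge 0$. For $j\ge n$ this is immediate since $\s^j x = \s^{j-n}b$ and $a\preceq\s^i b\preceq b$ for all $i$. For $0\le j\le n-1$, write $k=j+1$ and $m=n-k+1$; one must check $a\preceq w_k\cdots w_n b_{n+1}\cdots$ and $w_k\cdots w_n b_{n+1}\cdots\preceq b$. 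For the upper bound: if $w_k\cdots w_n\prec b_1\cdots b_m$ strictly, we are done regardless of the tail; if $w_k\cdots w_n = b_1\cdots b_m$, then $\s^j x = b_1\cdots b_m b_{n+1}b_{n+2}\cdots$, and here the subtle point is that after matching $b_1\cdots b_m$ we continue with $b_{n+1}\cdots$ rather than $b_{m+1}\cdots$; one needs $b_{n+1}b_{n+2}\cdots\preceq b_{m+1}b_{m+2}\cdots$, which is exactly the statement $\s^{m}b\succeq\s^{n}b$ — hmm, this is \emph{not} generally true. So appending the tail of $b$ is the wrong move when equality with a prefix of $b$ occurs deep inside $w$.

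The fix, and the key idea I would actually use, is a more careful extension: rather than always appending $b$'s tail, append the tail of $b$ \emph{shifted appropriately}, or argue by a limiting/density argument. Concretely, I would instead show $w$ is a prefix of some point of the form $i_{\a,\b}(x)$ or a limit of such, using that $\Xab$ is closed and that the set of admissible sequences is exactly those satisfying the lexicographic conditions at all shifts. The cleanest route: define $x$ by appending after $w_1\cdots w_n$ the sequence $a_1 a_2 a_3\cdots$ (the tail of $a$) if the lower constraints are tight, or more robustly, invoke that the sequence $w_1\cdots w_n a a a\cdots$ — no. The genuinely correct construction: take $x$ to be the coordinatewise-largest sequence in $\A^\N$ beginning with $w_1\cdots w_n$ and satisfying $\s^k x\preceq b$ for all $k$; such a maximal sequence exists by a standard greedy argument and automatically satisfies $\s^k x\succeq a$ because any sequence failing the lower bound can be increased. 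Verifying that the greedy/maximal choice is well-defined and lies in $\Xab$ is the main obstacle, and it is handled exactly as in the proof of the lexicographic characterization of $\Xab$ itself (Parry-type argument). This is why the paper simply cites \cite{CLR}: Corollary~\ref{CLR3.3} is a routine consequence of their Theorem characterizing $\Xab$, obtained by this greedy-extension argument, and I would present it in that form, with the prefix-comparison bookkeeping as the only genuinely new (and elementary) ingredient.
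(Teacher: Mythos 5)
The paper does not prove this corollary at all --- it is imported verbatim from \cite{CLR} --- so there is no internal proof to compare with; the only question is whether your sketch would stand on its own. Your necessity argument is fine (modulo the harmless typo $b_1\cdots b_n$ for $b_1\cdots b_{n-k+1}$), and your diagnosis of why the naive extension $w_1\cdots w_n b_{n+1}b_{n+2}\cdots$ fails is exactly right. The problem is that your sketch stops at what you yourself call ``the main obstacle.'' You take $x$ lexicographically maximal (not ``coordinatewise'' maximal) among sequences with prefix $w$ satisfying $\s^k x\preceq b$ for all $k$ --- existence is fine, since this set is closed and nonempty (it contains $w\,000\cdots$) --- and then assert that $x$ ``automatically'' satisfies $\s^k x\succeq a$ ``because any sequence failing the lower bound can be increased.'' That is precisely the step that needs proof, and your own computation two lines earlier shows why such splicing claims are not automatic: raising a coordinate can destroy an upper constraint at an earlier shift. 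Deferring it to ``as in the Parry-type argument'' makes your proof, at its crux, the same appeal to the literature that the paper itself makes.

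To close the gap one must exhibit the increase and check it stays in the constraint set. If $\s^j x\prec a$, let $i\ge 1$ be the first index with $x_{j+i}\neq a_i$; then $x_{j+i}<a_i$, and $j+i>n$ by the hypothesis on $w$ (so the prefix $w$ is untouched). Put $x'=x_1\cdots x_{j+i-1}\,a_i a_{i+1}\cdots$, so $x'\succ x$ and $\s^j x'=a$, whence $\s^k x'=\s^{k-j}a\preceq b$ for $k\ge j$. For $k<j$ the only nontrivial case is $x_{k+1}\cdots x_{j+i-1}=b_1\cdots b_{j+i-1-k}$; then the overlap gives $b_{j-k+1}\cdots b_{j-k+i-1}=a_1\cdots a_{i-1}$, and comparing $a\preceq\s^{j-k}b$ beyond these $i-1$ agreeing symbols yields $a_i a_{i+1}\cdots\preceq b_{j-k+i}b_{j-k+i+1}\cdots$, which is exactly the inequality needed for $\s^k x'\preceq b$. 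This contradicts maximality of $x$, so the maximal $x$ lies in $\Xab$ and sufficiency follows. With this verification (or an equivalent one) inserted, your approach is correct; without it, the proposal is an outline whose decisive step is only asserted.
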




\subsection{\textcolor{black}{Main result}}\label{S.main}
For a word $w$, we set
\begin{align*}
k_1(w)&=\max\{0\}\cup\{1\le k\le |w|:w_{|w|-k+1}\cdots w_{|w|}=a_1\cdots a_k\},\\
k_2(w)&=\max\{0\}\cup\{1\le k\le |w|:w_{|w|-k+1}\cdots w_{|w|}=b_1\cdots b_k\}.
\end{align*}
Then for $w$, 
\begin{align}\label{k}
k_1(w)\neq k_2(w) \quad\text{or}\quad k_1(w)=k_2(w)=0
\end{align}
since $a_1\neq b_1$. 
Let $\Pa\coloneq\{\e, a_1, a_1a_2, a_1a_2a_3,\cdots\}, \Pb\coloneq\{\e, b_1, b_1b_2, b_1b_2b_3,\cdots\}$ and $\P\coloneq\Pa\cup\Pb$.
For any word $w\in\Lab$ there are two ways of decomposition of $w$ into
$$w=v^as^a(w)=v^bs^b(w)$$
where $s^a(w)\in\Pa$ and $s^b(w)\in\Pb$ are the largest suffixes of $w$. 
It follows that $|s^a(w)|=k_1(w)$ and $|s^b(w)|=k_2(w)$. 
Furthermore there is a unique decomposition of $w$ into
\begin{eqnarray}
w=vs(w)\quad\text{where}\,\,s(w)=
   \begin{cases}
      s^a(w) & \,\,\text{if}\,\,\,k_1(w)>k_2(w), \\
      s^b(w) & \,\,\text{if}\,\,\,k_1(w)<k_2(w), \\
      \e & \,\,\text{if}\,\,\,k_1(w)=k_2(w)=0.
   \end{cases}\label{decom}
\end{eqnarray}
(Recall that \eqref{k} holds.)
For $u\in\Pb$ we set 
\begin{align}\label{zab}
z^{\a, \b}(u)\coloneq
\begin{cases}
   0 \quad\text{if}\,\,\,u=\e, \\
   \max\{k\in\N:ua_{k_1(u)+1}\cdots a_{k_1(u)+k}\in\Pb\}\cup\{0\} \quad\text{if}\,\,\,u\neq\e
 \end{cases}
\end{align}
and
$$\bar{z}^{\a, \b}(n)\coloneq\max\{z^{\a, \b}(u):u\,\,\text{prefix of $b$}, |u|\le n\}.$$
By setting
\begin{eqnarray*}
z^{\a, \b}(w)\coloneq
  \begin{cases}
  0 & \quad\text{if}\,\,\, k_1(w)\ge k_2(w),\\
  z^{\a, \b}(s^b(w)) & \quad\text{if}\,\,\,k_1(w)<k_2(w),
  \end{cases}
\end{eqnarray*}
the definition \eqref{zab} is extended to any word $w\in\Lab$.

\begin{defi}
An invariant probability measure $\nu$ is called a {\it weak Gibbs measure for a continuous function} $\psi$, if for any $\delta>0$ there exists $N_\delta\in\N$ such that for $m\ge N_\delta$,
\begin{align}\label{weakGibbs}
\sup_{x\in\Sab}\left|\frac{1}{m}\ln\nu([x_1\cdots x_{m}])-\frac{1}{m}\sum_{l=1}^{m}\psi(\s^l(x))\right|\le\delta,
\end{align}
where $[x_1\cdots x_{m}]=\{y\in\Sab:y_1\cdots y_{m}=x_1\cdots x_{m}\}$.
\end{defi}

\begin{remark}
 If we set $\psi=\varphi-p(\varphi)$ (the pressure $p(\varphi)$ will be defined later), then \eqref{weakGibbs} is equivalent to the condition that for $x\in\Sab$
$$e^{-\delta m}\le\frac{\nu([x_1\cdots x_{m}])}{\exp(-mp(\varphi)+\sum\nolimits_{l=1}^{m}\varphi(\s^l x))}\le e^{\delta m}.$$
Pfister and Sullivan \cite{PS2} proved that if $\nu$ is a weak Gibbs for $\psi$, then $p(\varphi)=0$.
\end{remark}

Our main results are following.
\begin{thm}\label{main}
Let \textcolor{black}{$\b>2$, $\k\in\N_0$ with $\k<\lceil\k/\b+\b\rceil -2$}, $\a=\textcolor{black}{\k}/\b$ and $\varphi\in C(\Sab)$ with bounded total oscillations (see \S\ref{bto} for a definition).
\begin{enumerate}
     \item If $\nu$ is an equilibrium measure for $\varphi$ and if
     $$\lim_{n\to\infty}\frac{\bar{z}^{\a, \b}(n)}{n}=0,$$
     then $\nu$ is a weak Gibbs measure for $\psi=\varphi-p(\varphi)$.
     \vspace{0.2cm}
     \item If $\nu$ is an equilibrium measure for $\varphi$ and if
     \begin{align}\label{nweakGibbs}
     \limsup_{n\to\infty}\frac{\bar{z}^{\a, \b}(n)}{n}>0,
     \end{align}
     then $\nu$ is not a weak Gibbs measure for $\psi=\varphi-p(\varphi)$.
\end{enumerate}
\end{thm}

\color{black}
In the case when $\kappa=0$, $(\kappa/\b, \b)$-shifts are $\b$-shifts. 
Thus for $\b>2$ Theorem~2.12 in \cite{PS1} can be consideblack as a corollary of Theorem~\ref{main}.
Indeed, since $a=i_{0, \b}(0)=(0,0,\cdots)$, we can rewrite the definition \eqref{zab} as
\begin{align*}
z^\b(u)\coloneq z^{0, \b}(u)=
\begin{cases}
   0 \quad\text{if}\,\,\,u=\e, \\
   \max\{k\in\N:u0^k\in\Pb\}\cup\{0\} \quad\text{if}\,\,\,u\neq\e,
 \end{cases}
\end{align*}
for $u\in\Pb$, where $0^k=\underbrace{0 \cdots \cdots 0}_{k}$.
We also write $\Sigma^\b\coloneq\Sigma^{0, \b}$ and $\bar{z}^\b(n)\coloneq\bar{z}^{0, \b}(n)$ for $n\in\N$. Then we obtain the following corollary for $\b$-shifts:

\begin{cor}[{\cite[Theorem 2.12]{PS1}}]
Let $\b>2$ and $\varphi\in C(\Sigma^\b)$ have bounded total oscillations. If $\nu$ is an equilibrium measure for $\varphi$, then $\nu$ is a weak Gibbs measure for $\psi=\varphi-p(\varphi)$ if and only if $\lim_{n\to\infty}\bar{z}^\b(n)/n=0.$
\end{cor}

\color{black}
\begin{ex}
Carapezza et al. established some criterion for the specification property of $(\a, \b)$-shifts (\cite{CLR}). For $(1/\b, \b)$-shifts their criterion implies that $(\Sigma_{1/\b, \b}, \s)$ satisfies the specification property if and only if there exists $M>0$ such that $\bar{z}^{\a, \b}(n)<M$ for all $n\in\N$. Thus by Theorem~\ref{main}  if $(\Sigma_{1/\b, \b}, \s)$ has this property, then any equilibrium measure for $\varphi$ is a weak Gibbs measure for $\psi=\varphi -p(\varphi)$. 
\end{ex}

\begin{ex}
By using arguments as in \cite{OS} we can choose a parameter $\b$ satisfying the condition \eqref{nweakGibbs}.
Set $n_k=2k-1+2^k$ for $k\ge 0$. Let $\b_0>3$ be the larger root of $\b+1/\b =4$ and $I_0=(\b_0, 4)$. We define
\begin{align*}
b_j=
\begin{cases}
   4\quad\text{if}\;\;j=1,\\
   0\quad\text{if}\;\;j=n_k+2, k\ge 0,\\
   2\quad\text{if}\;\;j=n_k+1, k\ge 1,\\
   1\quad\text{otherwise}.
\end{cases}
\end{align*}
That is, 
\begin{align*}
b&=(b_1, b_2, \cdots)\\
&=(4, 0, 1, 2, 0, \underbrace{1, 1}_{2^1}, 2, 0, \underbrace{1, 1, 1, 1}_{2^2}, 2, 0, \underbrace{1, \cdots, 1}_{2^3}, 2, \cdots, 0, \underbrace{1, \cdots, 1}_{2^k}, 2, \cdots).
\end{align*}
Fix $n\in\N$. Set $\mathcal{N}_n=\{\b>3:T_{1/\b, \b}^{k-1}(1)\in J_{b_{k}}\;\text{for all}\; 1\le k\le n\}$ (see Figure~\ref{trans}).
For $x\in\mathcal{N}_n$, we define $P_1(x)=x+1/x -b_1$ and $P_k(x)=xP_{k-1}(x)+1/x -b_k$ for $1\le k\le n$. 
Then we have $T_{1/\b, \b}^{k-1}(1)=P_k(\b)$ for $1\le k\le n$. 
Since $P_1'(x)=1-1/x^2$ and $P_k'(x)=P_{k-1}(x)+xP_{k-1}'(x)-1/x^2$ for $2\le k\le n$, we get
$$P_n'(x)=x^{n-1}\left((1-\frac{1}{x^2})x+\sum_{j=1}^{n-1}\frac{P_j(x)-1/x^2}{x^j}\right)\ge\frac{5}{2}x^{n-1}$$
and $P_n(x)$ is strictly increasing for $x>3$.

We prove $\bigcap_{n\ge 1}\mathcal{N}_n\neq\emptyset$. 
Set
$$J_0(\b)=\left[0, \frac{1-1/\b}{\b}\right), J_k(\b)=\left[\frac{k-1/\b}{\b}, \frac{k+1-1/\b}{\b}\right)\quad\text{for}\;k=1, 2, 3.$$
For $j\ge 1$ we define 
$$I_j=\left\{\b\in I_{j-1}:\begin{array}{l}
P_{n_{j-1}+2}(\b)\in J_0(\b), P_{n_{j-1}+i}(\b)\in J_1(\b)\;\text{for}\;2< i\le n_j-n_{j-1},\\
P_{n_j+1}(\b)\in J_2(\b)\end{array}\right\},$$
where $I_0=(3, 4)$.
By induction we can show that if we set $I_j=[\gamma, \eta)$, then we have $P_{n_j+1}(I_j)=[\frac{2-1/\gamma}{\gamma}, \frac{3-1/\eta}{\eta})\neq\emptyset$ because $P_n$ is expanding and  satisfies $P_{n_j+2}(\gamma)=0$, $P_{n_i+3}(\gamma)=1/\gamma$ for $i\ge 3$. 
Since $I_j\subset \mathcal{N}_{n_{j}+1}$, we have $\bigcap_{n\ge 1}\mathcal{N}_n\supset\bigcap_{j\ge 1}I_j\neq \emptyset$.
\end{ex}

\color{black}

\begin{figure}[htbp]
\vspace*{-1cm}
\captionsetup{labelfont={color=black}, textfont={color=black}}
\centering
\includegraphics[width=80mm]{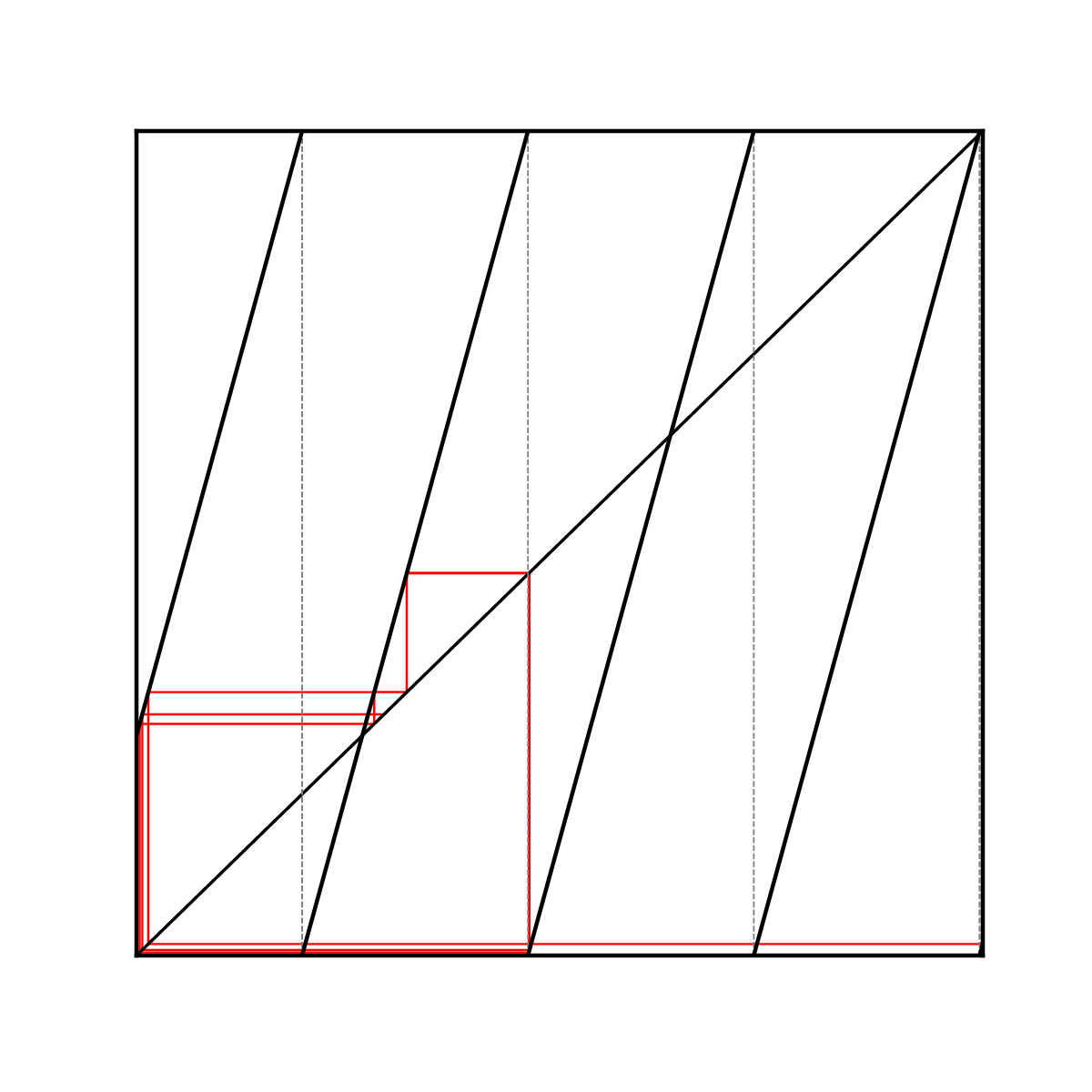}
\vspace{-0.5cm}
\caption{Graph of $T_{1/\b, \b}$ for $\b=3.747135\in\mathcal{N}_9$.}\label{trans}
\end{figure}

\section{\textcolor{black}{Mappings to words which can be freely concatenated}}\label{S.prop}
In this chapter, we introduce a transformation $w\mapsto\widetilde{w}$ of $\Lab$ such that $\widetilde{w}$ can be followed by all words in $\Lab$. 
\subsection{\textcolor{black}{Graphical description}}\label{S.graph}
In this section we consider a labeled graph $\Gab$, which can describe the $(\a, \b)$-shifts $\Xab$. This construction follows \cite{CLR}. 
\color{black}
The {\it follower set} $F(w)$ of the word $w\in\Lab$ is defined by
$$F(w)=\s^{|w|}([w])=\{x\in\A^\N:wx\in\Xab\}.$$
\color{black}
The following fact is an easy consequence of Corollary~\ref{CLR3.3}.
\begin{lem}
For $w\in\Lab$,
$$F(w)=\{x\in\Xab:\s^{k_1(w)}(a)\preceq x\preceq \s^{k_2(w)}(b)\}.$$
\end{lem}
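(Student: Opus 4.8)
The plan is to prove the identity $F(w)=\{x\in\Xab:\sigma^{k_1(w)}(a)\preceq x\preceq\sigma^{k_2(w)}(b)\}$ by showing the two inclusions, both of which follow by unwinding the lexicographic characterization from Corollary~\ref{CLR3.3}. First I would fix $w=w_1\cdots w_n\in\Lab$ and $x=(x_1,x_2,\dots)\in\A^\N$, and observe that $x\in F(w)$ means precisely that the concatenation $wx=w_1\cdots w_n x_1 x_2\cdots$ lies in $\Xab$. By Corollary~\ref{CLR3.3} applied to every finite prefix of $wx$ (equivalently, by the lexicographic description $\Xab=\{y:a\preceq\sigma^k y\preceq b\text{ for all }k\in\Z_+\}$ directly, since $wx$ is an infinite sequence), this is equivalent to the conjunction of three families of conditions: (i) $a\preceq\sigma^k(wx)\preceq b$ for $0\le k\le n-1$, i.e. conditions "internal to" or "straddling" the block $w$; (ii) $a\preceq\sigma^k(wx)$ means, for $k\ge n$, $a\preceq\sigma^{k-n}(x)$, and similarly $\sigma^{k-n}(x)\preceq b$, which is exactly the statement $x\in\Xab$.

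The key reduction is to analyze the straddling conditions in (i). For $0\le k\le n-1$, write $\sigma^k(wx)=w_{k+1}\cdots w_n x_1 x_2\cdots$. The upper constraint $w_{k+1}\cdots w_n x_1 x_2\cdots\preceq b$ is automatically implied, together with $x\preceq b$ coordinatewise, by the fact that $w\in\Lab$ forces $w_{k+1}\cdots w_n\preceq b_1\cdots b_{n-k}$ (Corollary~\ref{CLR3.3}): if this inequality is strict it already gives the result, and if $w_{k+1}\cdots w_n=b_1\cdots b_{n-k}$ then we need $x\preceq\sigma^{n-k}(b)$, but since $n-k$ ranges over $\{1,\dots,k_2(w)\}$ for exactly those $k$ with $w_{k+1}\cdots w_n$ a suffix equal to a prefix of $b$, the binding constraint among all these is the one with $n-k=k_2(w)$ (because $\sigma^{j}(b)\preceq b$ for all $j$, so the deepest suffix gives the strongest bound). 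This is where the definition of $k_2(w)$ does its work. Symmetrically, the lower constraints $a\preceq\sigma^k(wx)$ reduce, using $a_1\cdots a_{n-k}\preceq w_{k+1}\cdots w_n$, to the single binding condition $\sigma^{k_1(w)}(a)\preceq x$, using $a\preceq\sigma^j(a)$ for all $j$. Combining: $wx\in\Xab$ iff $x\in\Xab$ and $\sigma^{k_1(w)}(a)\preceq x\preceq\sigma^{k_2(w)}(b)$.

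For the reverse direction I would check that the stated conditions suffice: given $x\in\Xab$ with $\sigma^{k_1(w)}(a)\preceq x\preceq\sigma^{k_2(w)}(b)$, one verifies $a\preceq\sigma^k(wx)\preceq b$ for all $k\ge0$. For $k\ge n$ this is $x\in\Xab$. For $0\le k\le n-1$, the upper bound: if $w_{k+1}\cdots w_n\prec b_1\cdots b_{n-k}$ strictly, done; otherwise $w_{k+1}\cdots w_n=b_1\cdots b_{n-k}$ with $n-k\le k_2(w)$, and then $\sigma^k(wx)=b_1\cdots b_{n-k}x$; since $x\preceq\sigma^{k_2(w)}(b)$ and $n-k\le k_2(w)$, one shows $b_1\cdots b_{n-k}x\preceq b$ by comparing with $b_1\cdots b_{n-k}\sigma^{n-k}(b)=b$ (using that a common prefix $b_1\cdots b_{n-k}$ is shared and $x\preceq\sigma^{n-k}(b)$ follows from $x\preceq\sigma^{k_2(w)}(b)\preceq\sigma^{n-k}(b)$... ) — here one must be slightly careful and instead argue directly that $x\preceq\sigma^{n-k}(b)$ whenever $n-k\le k_2(w)$, which holds because $b_1\cdots b_{k_2(w)}$ is a suffix of $w$ so $b_{n-k+1}\cdots b_{k_2(w)}$ is a prefix of $b$, giving $\sigma^{n-k}(b)\succeq$ the relevant bound. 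The lower bound is symmetric with $k_1$ and $a$.

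The main obstacle I anticipate is the bookkeeping in the straddling case: correctly identifying that among all overlap-induced constraints $x\preceq\sigma^{n-k}(b)$ (for the various $k$ with a nontrivial $b$-suffix of $w_{k+1}\cdots w_n$) the strongest is exactly $x\preceq\sigma^{k_2(w)}(b)$, and dually for $a$ and $k_1(w)$. This rests on the monotonicity facts $a\preceq\sigma^j(a)$ and $\sigma^j(b)\preceq b$ together with the self-similar prefix structure of $a$ and $b$, and one should state these as a short preliminary observation before assembling the two inclusions. Everything else is a routine transcription of Corollary~\ref{CLR3.3}.
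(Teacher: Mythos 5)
The paper offers no written proof of this lemma (it is stated as "an easy consequence of Corollary~\ref{CLR3.3}"), and your plan is exactly the intended elaboration: translate $x\in F(w)$ into $wx\in\Xab$, use the lexicographic characterization, split the constraints into those with shift $k\ge |w|$ (giving $x\in\Xab$) and the straddling ones, and show that among the straddling constraints the binding ones are precisely $\s^{k_1(w)}(a)\preceq x$ and $x\preceq\s^{k_2(w)}(b)$. The forward inclusion needs only the constraints at the two shifts $k=|w|-k_1(w)$ and $k=|w|-k_2(w)$, read off after cancelling the common prefix; the reverse inclusion needs the "binding constraint" reduction, and your overall bookkeeping of that reduction is sound.

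One detail in your justification is wrong as written, though the conclusion it is meant to support is true. In the reverse direction you claim that if $n-k\le k_2(w)$ and $w$ ends in $b_1\cdots b_{n-k}$, then "$b_{n-k+1}\cdots b_{k_2(w)}$ is a prefix of $b$". What actually follows from the two suffix identities is that $b_1\cdots b_{n-k}$ is the length-$(n-k)$ suffix of $b_1\cdots b_{k_2(w)}$, i.e. $b_{k_2(w)-(n-k)+1}\cdots b_{k_2(w)}=b_1\cdots b_{n-k}$; the statement you wrote (that the \emph{complementary} block is a prefix) fails in general, e.g. for $b$ beginning $3,1,3,0,\dots$ with $k_2(w)=3$ and $n-k=1$. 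The correct short argument for the fact you need, namely $\s^{k_2(w)}(b)\preceq\s^{n-k}(b)$, is: since $\s^{k_2(w)-(n-k)}(b)\preceq b$ and these two sequences agree in their first $n-k$ coordinates (by the suffix identity above), deleting that common prefix preserves the order and gives $\s^{k_2(w)}(b)\preceq\s^{n-k}(b)$; the dual statement $\s^{n-k}(a)\preceq\s^{k_1(w)}(a)$ is proved the same way from $a\preceq\s^{k_1(w)-(n-k)}(a)$. With that substitution your argument goes through and matches the (implicit) proof the paper has in mind.
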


It is known that the $(\a, \b)$-shifts $\Xab$ can be described by a graph whose vertices are given by follower sets 
(see e.g., \cite{Ho2}). In this paper we use the labeled graph $\Gab=(\mathcal{V}^{\a, \b}, \mathcal{E}^{\a, \b})$ introduced by Carapezza, L\'{o}pez and Robertson (\cite{CLR}) as follows : we define the set of vertices by 
$$\mathcal{V}^{\a, \b}=\{[k_1(w), k_2(w)]:w\in\Lab\}\subset\Z_+^2.$$
The set $\mathcal{E}^{\a, \b}$ of edges consists of four types of edges defined in the following way.
\begin{enumerate}
     \item[\Ea\!\!.] 
     If $[j, k]\in\mathcal{V}^{\a, \b}$ and $a_{j+1}=b_{k+1}$ then there is a unique edge from $[j, k]$ to $[j+1, k+1]$ labeled by $a_{j+1}=b_{k+1}$.
     \vspace{0.2cm}
     \item[\Eb\!\!.] 
     If $[j, k]\in\mathcal{V}^{\a, \b}$ and $a_{j+1}<b_{k+1}$ then there is a unique edge from $[j, k]$ to $[j+1, 0]$ labeled by $a_{j+1}$.
     \vspace{0.2cm}
     \item[\Ec\!\!.] 
     If $[j, k]\in\mathcal{V}^{\a, \b}$ and $a_{j+1}<b_{k+1}$ then there is a unique edge from $[j, k]$ to $[0, k+1]$ labeled by $b_{k+1}$.
     \vspace{0.2cm}
     \item[\Ed\!\!.] 
     If $[j, k]\in\mathcal{V}^{\a, \b}$ and $a_{j+1}<c<b_{k+1}$ then there is a unique edge from $[j, k]$ to $[0, 0]$ labeled by $c$.
\end{enumerate}

The following lemmas have been shown by Carapezza, L\'{o}pez and Robertson (see \cite{CLR} for more details).
\begin{lem}[{\cite[Lemma 3.9]{CLR}}] 
Let $\gamma$ be a path in $\Gab$ beginning at $[0,0]$ and ending at $[i, j]$. If $w$ is the associated word over $\A$
then $i=k_1(w)$ and $j=k_2(w)$.
\end{lem}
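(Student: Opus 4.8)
The plan is to induct on the length $n$ of the path $\gamma$. For $n=0$ the path is the single vertex $[0,0]$ and the associated word is the empty word $\e$; since the defining sets in $k_1(\e)$ and $k_2(\e)$ are empty, $k_1(\e)=k_2(\e)=0$, which matches $[0,0]$. For the inductive step I would write $\gamma=\gamma' e$, where $\gamma'$ is a path from $[0,0]$ to a vertex $[j,k]$ with associated word $w$ and $e$ is a single edge from $[j,k]$ to $[j',k']$ with label $c$, so the word associated to $\gamma$ is $w'\coloneq wc$. By the induction hypothesis $j=k_1(w)$, $k=k_2(w)$; one also carries along the invariant $w\in\Lab$, using the preceding lemma $F(w)=\{x:\s^{k_1(w)}(a)\preceq x\preceq \s^{k_2(w)}(b)\}$ together with the fact that the label of any edge leaving $[j,k]$ lies in $[a_{j+1},b_{k+1}]$ (this is exactly what \Ea--\Ed encode), so that $wc$ is still admissible by Corollary~\ref{CLR3.3}.

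The computational heart is the following elementary ``overlap'' observation, which I would record first: since a suffix of $wc$ of length $\ge 2$ is a suffix of $w$ of length one shorter followed by $c$, the longest suffix of $wc$ that is a prefix of $a$ has length $j+1$ if $c=a_{j+1}$ and has length $\le j$ (in particular it is not $j+1$ or larger) if $c\ne a_{j+1}$; symmetrically for $b$ with $k$ in place of $j$. Applying this in the four edge types:

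In case \Ea ($c=a_{j+1}=b_{k+1}$, target $[j+1,k+1]$) the overlap facts give $k_1(wc)=j+1=j'$ and $k_2(wc)=k+1=k'$ at once. In case \Eb ($c=a_{j+1}<b_{k+1}$, target $[j+1,0]$) we get $k_1(wc)=j+1$ immediately, and it remains to rule out $k_2(wc)=m\ge 1$. If $m\ge1$ then $c=b_m$ and $w$ has a suffix of length $m-1$ equal to $b_1\cdots b_{m-1}$, so $m-1\le k_2(w)=k$; the subcase $m=k+1$ forces $c=b_{k+1}$, which is excluded, and in the subcase $m\le k$ the length-$(k+1)$ suffix of $wc$ is $b_1\cdots b_k c$, whose last $m$ symbols must coincide with $b_1\cdots b_m$, forcing $\s^{k-m+1}(b)=(b_1,\dots,b_{m-1},b_{k+1},\dots)$ while $b=(b_1,\dots,b_{m-1},b_m,\dots)$ with $b_m=c$; the admissibility inequality $\s^{k-m+1}(b)\preceq b$ then gives $b_{k+1}\le b_m=c$, contradicting $c<b_{k+1}$. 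Case \Ec is the mirror image, using $a\preceq\s^{\,\ell}(a)$ to force the first coordinate to reset to $0$, and case \Ed runs both resets simultaneously. This closes the induction.

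I expect the overlap facts and the $w\in\Lab$ bookkeeping to be routine; the only genuinely delicate point is the ``reset to $0$'' step in cases \Eb--\Ed, where one must invoke the lexicographic extremality of $a$ and $b$ (the inequalities $a\preceq\s^{\ell}(a)$ and $\s^{\ell}(b)\preceq b$) rather than reasoning purely about string overlaps — a purely combinatorial argument is insufficient, for instance when the symbol $a_1$ reappears as a small label inside $w$.
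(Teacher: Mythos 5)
The paper does not prove this statement at all: it is quoted verbatim from \cite[Lemma 3.9]{CLR}, so there is no in-paper argument to compare yours against; I can only judge the proposal on its own terms (and against the obvious route one would take in \cite{CLR}, namely induction along the path). On those terms your argument is correct and complete in outline. The base case is trivial, the overlap observation ($k_1(wc)=k_1(w)+1$ if $c=a_{k_1(w)+1}$, and $k_1(wc)\le k_1(w)$ otherwise, and symmetrically for $k_2$) is exactly right, and the genuinely delicate point — showing in cases \Eb, \Ec, \Ed that the ``reset'' coordinate is exactly $0$ — is handled correctly: assuming a suffix $b_1\cdots b_m$ of $wc$ with $1\le m\le k_2(w)$ forces $\s^{k_2(w)-m+1}(b)$ to begin with $b_1\cdots b_{m-1}b_{k_2(w)+1}$, and $\s^{\ell}(b)\preceq b$ then yields $b_{k_2(w)+1}\le b_m=c$, contradicting $c<b_{k_2(w)+1}$ (mirror argument with $a\preceq\s^{\ell}(a)$ for $k_1$). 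Your closing remark is also apt: the lexicographic extremality of $a$ and $b$ under shifts is indispensable here. One small economy: the invariant $w\in\Lab$ that you carry along (via the follower-set lemma and Corollary~\ref{CLR3.3}) is not actually needed for the stated conclusion — the inductive step uses only the definitions of $k_1,k_2$ and the inequalities $a\preceq\s^{\ell}(a)$, $\s^{\ell}(b)\preceq b$ — so you could drop that bookkeeping; keeping it is harmless but it quietly requires the same shift-extremality argument again, so it is not quite as ``routine'' as the rest.
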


\begin{lem}[{\cite[Lemma 3.10]{CLR}}]\label{CLR3.10}
A word $w$ over $\A$
 is in $\Lab$ if and only if 
 a path beginning at $[0, 0]$ with label $w$ is in $\Gab$.
\end{lem}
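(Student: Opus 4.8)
I would prove the equivalence by induction on $|w|$, the base case $|w|\le 1$ (the empty path sitting at $[0,0]$, resp.\ the out-edges of $[0,0]$, which carry every label in $\A$) being immediate from the edge rules. The main tool is the preceding lemma (\cite[Lemma 3.9]{CLR}): any path from $[0,0]$ with label $u$ ends at the vertex $[k_1(u),k_2(u)]$. I would also record at the outset the following reading of \Ea--\Ed: from a vertex $[j,k]\in\Vab$ the labels of the out-edges are exactly the letters $c\in\A$ with $a_{j+1}\le c\le b_{k+1}$, and each such $c$ labels precisely one out-edge (rule \Ea when $a_{j+1}=b_{k+1}$; rules \Eb, \Ec for the two extreme labels and \Ed for the intermediate ones when $a_{j+1}<b_{k+1}$).

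For the implication $w\in\Lab\Rightarrow$ (a path exists) I would use that every prefix of $w$ is again in $\Lab$ and build the path letter by letter. Assuming a path from $[0,0]$ labelled $w_1\cdots w_{m-1}$ has been constructed, it ends at $[j,k]:=[k_1(w_1\cdots w_{m-1}),k_2(w_1\cdots w_{m-1})]$ by \cite[Lemma 3.9]{CLR}; then $a_1\cdots a_j$ and $b_1\cdots b_k$ are the largest suffixes of $w_1\cdots w_{m-1}$ in $\Pa$, $\Pb$, and Corollary~\ref{CLR3.3} applied to $w_1\cdots w_m\in\Lab$ at the positions $m-j$ and $m-k$ gives $a_1\cdots a_{j+1}\preceq a_1\cdots a_j w_m$ and $b_1\cdots b_k w_m\preceq b_1\cdots b_{k+1}$, i.e.\ $a_{j+1}\le w_m\le b_{k+1}$ (trivial when $j=0$ or $k=0$ since $a_1=0$, $b_1=\lambda$). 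By the opening observation there is a unique out-edge of $[j,k]$ labelled $w_m$, which I append; after $|w|$ steps this produces a path from $[0,0]$ with label $w$.

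For the converse, suppose a path from $[0,0]$ labelled $w=w_1\cdots w_n$ exists and put $w'=w_1\cdots w_{n-1}$; by induction $w'\in\Lab$, and by \cite[Lemma 3.9]{CLR} the initial segment ends at $[j,k]=[k_1(w'),k_2(w')]$, so the final edge being an out-edge of $[j,k]$ forces $a_{j+1}\le w_n\le b_{k+1}$. I would then verify the Corollary~\ref{CLR3.3} conditions for $w$: for each $l$, starting from $a_1\cdots a_{n-l}\preceq w_l\cdots w_{n-1}\preceq b_1\cdots b_{n-l}$ (valid since $w'\in\Lab$), if an inequality is strict it survives appending $w_n$, and the only remaining case for the right inequality is $w_l\cdots w_{n-1}=b_1\cdots b_{n-l}$, which forces $n-l\le k_2(w')=k$ and hence $b_1\cdots b_{n-l}=b_{k-(n-l)+1}\cdots b_k$; comparing $\s^{k-(n-l)}(b)\preceq b$ past the first $n-l$ (agreeing) coordinates yields $b_{k+1}\le b_{n-l+1}$, so $w_n\le b_{k+1}\le b_{n-l+1}$ and the right inequality holds. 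The left inequality is symmetric, using $k_1(w')$ and $\s^{j-(n-l)}(a)\succeq a$; the case $l=n$ is automatic because $0=a_1\le w_n\le\lambda=b_1$. Hence $w\in\Lab$ by Corollary~\ref{CLR3.3}.

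The step I expect to be the main obstacle is exactly this lexicographic bookkeeping in the converse: passing from the single inequality $a_{j+1}\le w_n\le b_{k+1}$ read off the graph to the whole family of inequalities in Corollary~\ref{CLR3.3}. That is where one must exploit that $a$, $b$ are shift-minimal and shift-maximal and that a prefix of $b$ (resp.\ $a$) occurring as a suffix of $w'$ of length $<k_2(w')$ (resp.\ $<k_1(w')$) recurs as an interior factor of $b_1\cdots b_{k_2(w')}$ (resp.\ $a_1\cdots a_{k_1(w')}$); everything else is routine.
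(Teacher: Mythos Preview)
The paper does not give its own proof of this lemma; it is quoted verbatim from \cite[Lemma 3.10]{CLR} and used as a black box. Your argument is correct and is essentially the natural one: read off from the edge rules that the out-labels at $[j,k]\in\Vab$ are precisely $\{c\in\A:a_{j+1}\le c\le b_{k+1}\}$, then induct on $|w|$ using Lemma~3.9 of \cite{CLR} to identify the terminal vertex at each stage, with Corollary~\ref{CLR3.3} supplying the admissibility criterion. The delicate point you flagged---recovering all the Corollary~\ref{CLR3.3} inequalities for $w$ from the single inequality $a_{j+1}\le w_n\le b_{k+1}$ in the converse---is handled correctly: when $w_l\cdots w_{n-1}=b_1\cdots b_{n-l}$ one has $n-l\le k_2(w')=k$ by maximality of $k_2$, so this block is simultaneously the tail $b_{k-(n-l)+1}\cdots b_k$ of $b_1\cdots b_k$, and then $\sigma^{k-(n-l)}b\preceq b$ with agreement on the first $n-l$ coordinates forces $b_{k+1}\le b_{n-l+1}$; the $a$-side is symmetric via $\sigma^m a\succeq a$. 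This is exactly the mechanism behind the proof in \cite{CLR}, so there is nothing to compare beyond noting that the paper itself defers to that reference.
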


By Lemma \ref{CLR3.10}, every word $w=w_1\cdots w_n\in\Lab$ corresponds to a path $\pth(w)$ in $\Gab$ starting at $[0, 0]$ as follows:
\begin{align*}
\pth(w):&[0, 0]\xrightarrow{w_1}[k_1(w_1), k_2(w_1)]\xrightarrow{w_2}[k_1(w_1w_2), k_2(w_1w_2)]\\
&\hspace{1cm}\xrightarrow{w_3}\cdots\xrightarrow{w_n}[k_1(w), k_2(w)].
\end{align*}
Write $\vtx(w)=[k_1(w), k_2(w)]$. 
\textcolor{black}
{
\begin{remark}
In the case when $\a=0$ we will use the following modified graph $\mathcal{G}^\b=(\mathcal{V}^{\b}, \mathcal{E}^{\b})$ instead of $\Gab$: the set of verticies $\mathcal{V}^{\b}$ is defined by 
$$\mathcal{V}^{\b}=\{[k_1(w), k_2(w)]:w\in\mathcal{L}^{0, \b}, k_2(w)>0\}\cup\{[0, 0]\}\subset\Z_+^2$$ 
and $\mathcal{E}^\b$ is the set of all edges satisfying \Ea\!\!, \Ec\!\!,  \Ed and 
\begin{enumerate}
\item[\Eb\!\!'.] If $[j, k]\in\mathcal{V}^{\a, \b}$ and $a_{j+1}<b_{k+1}$ then there is a unique edge from $[j, k]$ to $[0, 0]$ labeled by $a_{j+1}$.
\end{enumerate}
For $w\in\mathcal{L}^{0, \b}$ we set 
\begin{align*}
k_1'(w)=
\begin{cases}
k_1(w)&k_2(w)>0,\\
0&k_2(w)=0
\end{cases}
\end{align*}
and $\vtx'(w)=[k_1'(w), k_2(w)]$. Then we have $\mathcal{V}^\b=\{\vtx'(w):w\in\mathcal{L}^{0, \b}\}$ and every word $w=w_1\cdots w_n\in\mathcal{L}^{0, \b}$ corresponds to a path 
\begin{align*}
\pth'(w):&[0, 0]\xrightarrow{w_1}[k_1'(w_1), k_2(w_1)]\xrightarrow{w_2}[k_1'(w_1w_2), k_2(w_1w_2)]\\
&\hspace{1cm}\xrightarrow{w_3}\cdots\xrightarrow{w_n}[k_1'(w), k_2(w)].
\end{align*}
Clearly, the map $\pth(w)\mapsto\pth'(w)$ is bijective. By using $\mathcal{G}^\b$ we can consider the self-loop $[0, 0]\xrightarrow{0}[0, 0]$. 
\end{remark}
}

\def\Vab{\mathcal{V}^{\a, \b}}

\subsection{\textcolor{black}{The transformation $w\mapsto\widetilde{w}$ of $\Lab$}}
From now on we assume that \textcolor{black}{$\b>2$ and $\a=\k/\b$, where $\k\in\N_0$ satisfies $\k<\lceil\k/\b+\b\rceil -2$}. One can easily check that 
\begin{align*}
a &=(a_1, a_2, \cdots)=(0, \k, \k, \k\cdots),\\
b_1 &=\lambda=\lceil \k/\b+\b\rceil-1\ge\textcolor{black}{\kappa+2}.
\end{align*}

\begin{figure}[htbp]
\centering
\hspace*{-1.8cm}
\includegraphics[clip, width=140mm]{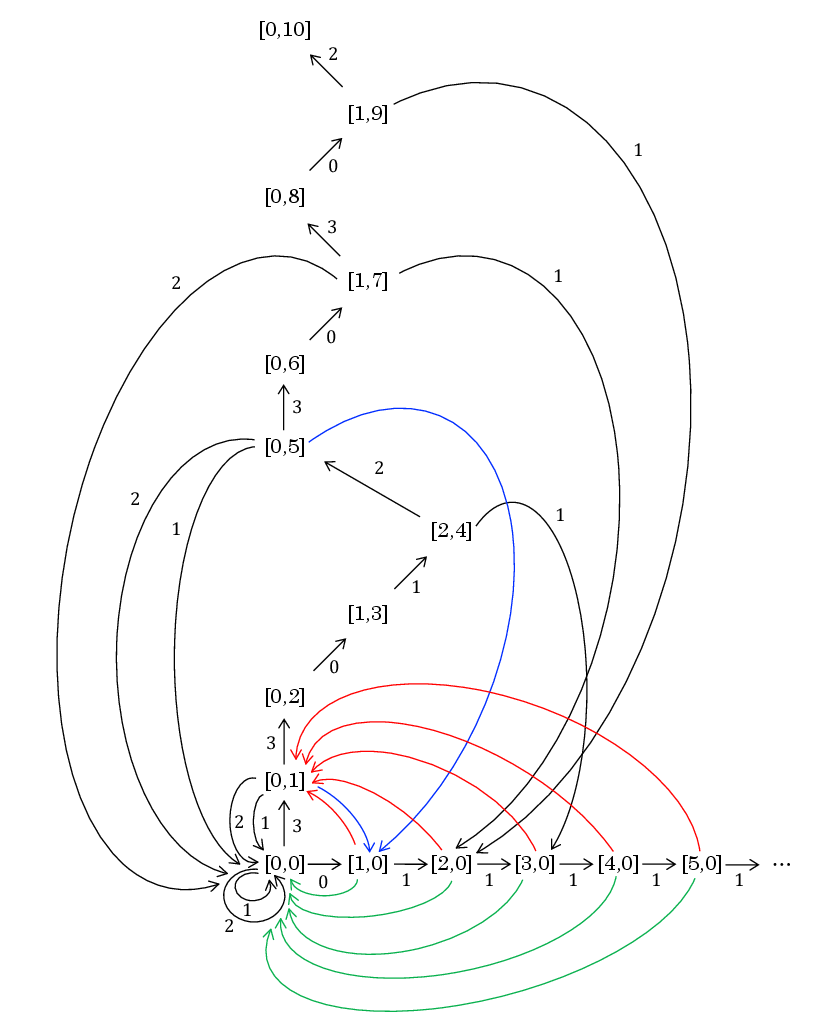}
\caption{Part of the graph $\Gab$ when $\beta=7/2$, $\a=2/7$ and $b=(3, 3, 0, 1, 2, 3, 0, 3, 0, 2,\cdots)$ with all edges terminating at $[0, 1]$ labeled ``$3$'', all edges terminating at $[1, 0]$ labeled ``$0$'' and all edges from $[j, 0]$ to $[0, 0]$ labeled ``$2$''.}
\end{figure}

The following lemmas are easy consequences of the definition of $\Gab$.

\begin{lem}\label{grapha}
The edge $[0, 0]\to[1, 0]$ in $\Gab$ is labeled by $0$ and each edge $[k, 0]\to[k+1, 0]$ ($k\ge1$) in $\Gab$ is labeled by $\k$. For $k\ge1$ and $\k< i\le\lambda-1$ there are an edge from $[k, 0]$ to $[0, 0]$ labeled by $i$ and an edge from $[k, 0]$ to $[0, 1]$ labeled by $\lambda$.
\end{lem}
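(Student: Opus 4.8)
The plan is to read off every claimed edge directly from the edge rules \Ea--\Ed\ together with the explicit values $a=(0,1,1,1,\dots)$ and $b_1=\lambda\ge 3$ established just above. The key inputs are: (i) $[0,0]\in\Vab$ always, and the vertices $[k,0]$ for $k\ge 1$ lie in $\Vab$ because $0^k\in\Lab$ (indeed $a_1\cdots a_k=0\,1\cdots 1$ has $k_1=1$ only, but more simply $[k,0]=\vtx(0^k)$ since the all-zero word is admissible and its only matching suffix with a prefix of $a$ or $b$ is handled by $k_1$); (ii) for these vertices the relevant comparison is between $a_{k+1}$ and $b_1=\lambda$, with $a_1=0$ and $a_{k+1}=1$ for $k\ge 1$.

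First I would treat the edge out of $[0,0]$ along the ``$a$-branch'': here $j=0$, $k=0$, so $a_{j+1}=a_1=0$ and $b_{k+1}=b_1=\lambda\ge 3$, hence $a_{j+1}<b_{k+1}$ and rule \Eb\ applies, giving a unique edge $[0,0]\to[1,0]$ labeled $a_1=0$. Next, for $k\ge 1$ and the vertex $[k,0]$, we have $a_{k+1}=1$ (since $a=(0,1,1,\dots)$) and $b_1=\lambda\ge 3$, so again $a_{k+1}<b_1$; rule \Eb\ then produces the unique edge $[k,0]\to[k+1,0]$ labeled $a_{k+1}=1$. Rule \Ec\ applied to $[k,0]$ with $k\ge 1$ gives the unique edge $[k,0]\to[0,1]$ labeled $b_1=\lambda$. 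Finally, rule \Ed\ applied to $[k,0]$: for each integer $c$ with $a_{k+1}=1<c<\lambda=b_1$, i.e.\ $2\le c\le\lambda-1$, there is a unique edge $[k,0]\to[0,0]$ labeled $c$; since $\lambda\ge 3$ the range $2\le i\le\lambda-1$ is nonempty. This exhausts all the assertions of the lemma.

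The only point that needs a word of justification rather than pure bookkeeping is that the vertices $[k,0]$ with $k\ge1$ and $[0,1]$ actually belong to $\Vab$, so that the edge rules are being invoked at legitimate vertices; but $[k,0]=\vtx(0^k)$ and $[0,1]=\vtx(0^{k}\lambda)$ follow from Lemma~\ref{CLR3.10} because $0^k$ and $0^k\lambda$ are admissible words (they satisfy the lexicographic constraints of Corollary~\ref{CLR3.3}, using $a=(0,1,1,\dots)\preceq 0^k\cdots$ and $\lambda=b_1$). I do not expect any real obstacle here; the content of the lemma is entirely a specialization of the general graph construction to $\a=1/\b$, and the ``hard part,'' such as it is, is merely making sure every edge of the claimed form — and no others of the stated sources — arises from exactly one of \Ea--\Ed.
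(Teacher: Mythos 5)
Your main bookkeeping is exactly what the paper intends: the paper gives no written proof of this lemma (it is stated as an ``easy consequence of the definition of $\Gab$''), and your application of the rules \Eb, \Ec, \Ed at $[0,0]$ and $[k,0]$, using $a=(0,1,1,\dots)$ and $b_1=\lambda\ge3$, is correct and complete as far as the edge labels go.

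However, the auxiliary step where you justify that $[k,0]\in\Vab$ is wrong as written. You claim $0^k\in\Lab$ and $[k,0]=\vtx(0^k)$. Since $a=(0,1,1,\dots)$, Corollary~\ref{CLR3.3} requires every suffix of an admissible word to dominate the corresponding prefix of $a$; already $00$ fails this ($a_1a_2=01\not\preceq 00$), so $0^k\notin\Lab$ for $k\ge2$. Moreover, even for the admissible word $0$ one has $k_1(0)=1$, and in general an all-zero word could never have $k_1=k\ge 2$, so $\vtx(0^k)=[k,0]$ is not the right identification; your parenthetical remark that ``$a_1\cdots a_k=01\cdots1$ has $k_1=1$ only'' is also backwards, since the whole word is a prefix of $a$ and hence $k_1(a_1\cdots a_k)=k$. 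The correct (and immediate) witness is precisely this prefix: $a_1\cdots a_k=01\cdots1\in\Lab$ because $a=i_{\a,\b}(0)\in\Xab$, and it satisfies $k_1=k$ and $k_2=0$ (no suffix of $01\cdots1$ can match a prefix of $b$, since $b_1=\lambda\ge3$), so $\vtx(a_1\cdots a_k)=[k,0]\in\Vab$. Likewise $[0,1]=\vtx(b_1)=\vtx(\lambda)$ (or simply note that the terminal vertex of any edge produced by \Ea--\Ed\ is automatically in $\Vab$), rather than $\vtx(0^k\lambda)$, which again involves an inadmissible word. With this repair the rest of your argument stands and coincides with the paper's implicit one.
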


\begin{lem}\label{graphb}
The edge $[0, 0]\to[0, 1]$ in $\Gab$ is labeled by $\lambda$. If  $[j, k]$ is in $\Vab$ for $j, k\ge1$, then we have $j<k$, the edge $[0, k-j]\to[1, k-j+1]$ is labeled by $0$ and for $1\le i\le j-1$ the edge $[i, k-j+i]\to[i+1, k-j+i+1]$ is labeled by $\k$.
\end{lem}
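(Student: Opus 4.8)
The plan is as follows. The first assertion is immediate from the construction of $\Gab$: the empty word $\e$ satisfies $k_1(\e)=k_2(\e)=0$, so $[0,0]\in\Vab$, and since $a_1=0<\lambda=b_1$, rule \textbf{E3} produces a (unique) edge $[0,0]\to[0,1]$ labeled $b_1=\lambda$.

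For the second assertion, I would fix $[j,k]\in\Vab$ with $j,k\ge1$ and pick $w\in\Lab$ with $\vtx(w)=[j,k]$, so that $w$ ends with $a_1\cdots a_j=0\,1^{j-1}$ and also with $b_1\cdots b_k$. By \eqref{k} we have $j\ne k$; and $j>k$ is impossible, for then the suffix of $w$ of length $k$ would be $a_{j-k+1}\cdots a_j=1^{k}$ (every entry is $1$ because $j-k+1\ge 2$), forcing $b_1\cdots b_k=1^k$ and hence $b_1=1$, contrary to $b_1=\lambda\ge 3$. So $j<k$, hence $k-j\ge 1$. Comparing the two suffixes of $w$ (of lengths $j\le k$) yields the block identity $b_{k-j+1}\cdots b_k=a_1\cdots a_j$, i.e. $b_{k-j+1}=0$ and $b_{k-j+2}=\cdots=b_k=1$; moreover $k_1(b_1\cdots b_k)=j$, since any longer suffix of $b_1\cdots b_k$ matching a prefix of $a$ would be a suffix of $w$ matching a prefix of $a$, contradicting $k_1(w)=j$.

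Next I would track the path $\pth(b_1\cdots b_k)$, whose $m$-th vertex is $[\,k_1(b_1\cdots b_m),\,m\,]$. From $b_{k-j+1}=0=a_1$ one reads off $k_1(b_1\cdots b_{k-j+1})=1$ (it is $\ge1$, and $\ge2$ would force $b_{k-j+1}=a_2=1$), so this vertex is $[1,k-j+1]$. The edge of the path entering it carries label $b_{k-j+1}=0$ and leaves $\vtx(b_1\cdots b_{k-j})=[p,k-j]$; since the target $[1,k-j+1]$ has first coordinate $1$ and nonzero second coordinate, it cannot be an \textbf{E2}, \textbf{E3}, or \textbf{E4} target, so the edge is of type \textbf{E1} and necessarily $p=0$. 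This gives $[0,k-j]\in\Vab$ together with the edge $[0,k-j]\to[1,k-j+1]$ labeled $0$. The remaining edges follow by induction on $i$: if $\vtx(b_1\cdots b_{k-j+i})=[i,k-j+i]$ with $1\le i\le j-1$, then $a_{i+1}=1=b_{k-j+i+1}$ (since $2\le i+1\le j$, the index $k-j+i+1$ lies in the block of $1$'s found above), so \textbf{E1} yields an edge $[i,k-j+i]\to[i+1,k-j+i+1]$ labeled $1$; furthermore this is the only edge leaving $[i,k-j+i]$, because \textbf{E2}--\textbf{E4} all require the strict inequality $a_{i+1}<b_{k-j+i+1}$, so $\pth(b_1\cdots b_k)$ must traverse it and $\vtx(b_1\cdots b_{k-j+i+1})=[i+1,k-j+i+1]$.

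I expect the only genuine obstacle to be the combinatorial bookkeeping in the third step: everything rests on the rigid shape $a=(0,1,1,\dots)$, which is what forces both the block identity $b_{k-j+1}\cdots b_k=0\,1^{j-1}$ and the exact first coordinates $k_1(b_1\cdots b_m)$ met along $\pth(b_1\cdots b_k)$, together with the elementary remark that an \textbf{E1}-edge is the unique edge issuing from its source. Pinning down $\vtx(b_1\cdots b_{k-j})=[0,k-j]$ from the edge-type dichotomy is the crux; the rest is a routine induction.
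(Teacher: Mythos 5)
Your proposal is correct; the paper itself offers no written proof (it dismisses this lemma as an easy consequence of the definition of the graph), and your argument supplies exactly the intended verification: the rigid form $a=(0,1,1,\dots)$ with $b_1=\lambda\ge 3$ forces $j<k$ and the block identity $b_{k-j+1}\cdots b_k=01^{j-1}$, and tracking $\pth(b_1\cdots b_k)$ together with the observation that an \textbf{E1}-edge (equality $a_{p+1}=b_{q+1}$) excludes \textbf{E2}--\textbf{E4} pins down the vertices $[i,k-j+i]$ and the labels $0,1,\dots,1$. No gaps; this matches what the paper takes for granted.
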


For each prefix $u\in\P$ we define a word $\widehat{u}$ as follows:
 For $u=a_1\cdots a_l\in\Pa$, $\widehat{u}=\widehat{a}_1\cdots\widehat{a}_l$
is defined by
\begin{eqnarray*}
\widehat{a}_i
\coloneq 
  \begin{cases}
     a_i & \quad\text{if}\,\,\,i=1,\cdots, l-1, \\
     a_l+1 & \quad\text{if}\,\,\,i=l.
  \end{cases}
  \end{eqnarray*}
For $u=b_1\cdots b_l\in\Pb$ with $b_l<\textcolor{black}{\k+2}$, $\widehat{u}\coloneq\widehat{b}_1\cdots\widehat{b}_l$ is defined by
\begin{eqnarray*}
\widehat{b}_i
\coloneq 
  \begin{cases}
     b_i & \quad\text{if}\,\,\,i\neq l-k_1(u^-)-1, l-k_1(u^-), \\
     b_i-1 & \quad\text{if}\,\,\,i=l-k_1(u^-)-1,\\
     \lambda-1 & \quad\text{if}\,\,\,i=l-k_1(u^-)
  \end{cases}
  \end{eqnarray*}
where $u^-=b_1\cdots b_{l-1}$.
For $u=b_1\cdots b_l\in\Pb$ with $b_l\ge\textcolor{black}{\k+2}, \widehat{u}\coloneq\widehat{b}_1\cdots\widehat{b}_l$ is defined by
\begin{eqnarray*}
\widehat{b}_i
& =
  \begin{cases}
     b_i & \quad\text{if}\,\,\,i=1,\cdots, l-1, \\
     b_l-1 & \quad\text{if}\,\,\,i=l.
  \end{cases}
\end{eqnarray*}
By convention we set $\widehat{\e}=\e$.

\begin{remark}\label{u^coor}
If $u=\e$ then $u=\widehat{u}$.
If $u\in\Pa\backslash\{\e\}$ then $u$ and $\widehat{u}$ differ at the last coordinate.
If $u\in\Pb\backslash\{\e\}$ then $u$ and $\widehat{u}$ differ at most at two coordinates.
\end{remark}

\begin{lem}\label{u^}
Let $u\in\P$. Then 
$\widehat{u}\in\Lab$ and $\vtx(\widehat{u})=[0, 0]$.
\end{lem}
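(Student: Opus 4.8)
The statement to be proved is Lemma~\ref{u^}: for every prefix $u\in\P$, the word $\widehat u$ lies in $\Lab$ and its terminal vertex is $\vtx(\widehat u)=[0,0]$. The plan is to split according to the three cases in the definition of $\widehat u$ (namely $u\in\Pa$, $u\in\Pb$ with last letter $<3$, and $u\in\Pb$ with last letter $\ge 3$), and in each case exhibit the path $\pth(\widehat u)$ explicitly in $\Gab$ using Lemmas~\ref{grapha} and \ref{graphb}, reading off that it ends at $[0,0]$. Admissibility of $\widehat u$ then follows from Lemma~\ref{CLR3.10}, since producing a path in $\Gab$ starting at $[0,0]$ with label $\widehat u$ is exactly what is needed.

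First I would treat $u\in\Pa\setminus\{\e\}$, say $u=a_1\cdots a_l=0\,1\cdots1$. Here $\widehat u=0\,1\cdots1\,2$ (or, if $l=1$, $\widehat u=1$). By Lemma~\ref{grapha} the prefix $0\,1\cdots1$ traces the path $[0,0]\xrightarrow{0}[1,0]\xrightarrow{1}[2,0]\to\cdots\to[l,0]$ (using $\b>3$, so $\lambda\ge 3$ and letters $2,\dots,\lambda-1$ are available from each $[k,0]$); the final letter $a_l+1$ equals $2\le\lambda-1$ (when $l\ge2$) or, when $l=1$, $\widehat u=1$ leads $[0,0]\xrightarrow{0}\cdots$—one has to double-check the $l=1$ case by hand, since then $\widehat{a}_1=a_1+1=1$ and Lemma~\ref{grapha} gives the edge $[0,0]\xrightarrow{?}$; actually $1$ is the label needed and by type \Eb\ there is an edge $[0,0]\to[1,0]$ wait—careful here: one must verify $1\in\{2,\dots,\lambda-1\}$ fails, so $l=1$ needs the edge of type \Ed\ or \Eb\ from $[0,0]$ with label $1$; since $a_1=0<1<\lambda=b_1$ and $1\le\lambda-1$ but also $1\ge a_1+1$, so it is a type \Eb\ edge $[0,0]\to[1,0]$ when $1=a_1+1$. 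In all subcases the terminal vertex is $[0,0]$ because the last edge is of type \Ed\ (for $l\ge2$, landing at $[0,0]$ from $[l-1,0]$ wait, re-examine: $\widehat u$ has length $l$, the first $l-1$ letters $0\,1\cdots1$ lead to $[l-1,0]$, and the last letter $2$ gives a type-\Ed\ edge $[l-1,0]\to[0,0]$ by Lemma~\ref{grapha}). I would write this out carefully to get the indices right.

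Next, the two $\Pb$ cases. For $u=b_1\cdots b_l$ with $b_l\ge 3$: then $\widehat u=b_1\cdots b_{l-1}(b_l-1)$, so $\pth(\widehat u)$ follows $\pth(b_1\cdots b_{l-1})=[0,0]\to\cdots\to[k_1(u^-),k_2(u^-)]$ wait, more precisely $\vtx(b_1\cdots b_{l-1})=[0,l-1]$ (it's a prefix of $b$), and then the last letter $b_l-1$ satisfies $a_{1}\le b_l-1 < b_l=b_{(l-1)+1}$—I need $a_1=0\le b_l-1$, true since $b_l\ge3$, and strictness $b_l-1<b_l$. By the edge-type rules at vertex $[0,l-1]$, the letter $b_l-1$ strictly between $a_1$ and $b_l$ either gives a type-\Eb\ edge to $[1,0]$ (if $b_l-1=a_1+1=1$, impossible since $b_l\ge3$) or a type-\Ed\ edge to $[0,0]$ (if $a_1<b_l-1<b_l$, which holds), so $\pth(\widehat u)$ ends at $[0,0]$. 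For $u=b_1\cdots b_l\in\Pb$ with $b_l<3$: here $\widehat u$ modifies coordinates $l-k_1(u^-)-1$ and $l-k_1(u^-)$, decreasing the first by $1$ and setting the second to $\lambda-1$. The idea is that $u^-=b_1\cdots b_{l-1}$ ends with a suffix $a_1\cdots a_{k_1(u^-)}=0\,1\cdots1$, so $u=b_1\cdots b_m\,0\,1\cdots1\,b_l$ where $m=l-k_1(u^-)-1$; the path $\pth(b_1\cdots b_{m-1})$ reaches some vertex $[0,m-1]$ (prefix of $b$), then $\widehat b_m=b_m-1$, then $\lambda-1$, then $1\cdots1$, then $b_l$. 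I would check that from $[0,m-1]$ the letter $b_m-1<b_m$ with $b_m-1\ge a_1+1=1$ wait—I need to know $b_m\ge 2$ here; this should follow from $u$ being of this special form (the structure of $a$ forces $b_m\ge 2$ at that position, cf.\ the discussion of the structure of $a$ in the introduction). Then the type-\Eb\ edge goes $[0,m-1]\to[1,0]$, next $\lambda-1$ from $[1,0]$ is a type-\Ed\ edge (since $2\le\lambda-1\le\lambda-1$) to $[0,0]$... wait, but that can't be right if there are more letters after. Let me reconsider: after $[1,0]$ the letter should be $\lambda-1$, and from $[1,0]$ Lemma~\ref{grapha} only provides edges labeled $1$ (to $[2,0]$), $i\in\{2,\dots,\lambda-1\}$ (to $[0,0]$), and $\lambda$ (to $[0,1]$); so if $\lambda-1\ge 2$ the letter $\lambda-1$ does lead to $[0,0]$, and then the remaining letters $1\cdots1\,b_l$ must form an admissible word starting from $[0,0]$—which they do, being a prefix-type word. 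Then $\vtx(\widehat u)=\vtx(1\cdots1\,b_l$ read from $[0,0])$; and since $b_l<3$, i.e.\ $b_l\in\{0,1,2\}$, the final vertex is $[0,0]$ by Lemma~\ref{grapha} unless $b_l=1$, in which case more care is needed. I expect this last case to be the main obstacle: pinning down exactly where the suffix $a_1\cdots a_{k_1(u^-)}$ sits inside $u$, justifying $b_m\ge 2$, and tracking the path through the switch $[0,m-1]\to[1,0]\to[0,0]$ and then to the very end, all while keeping the edge-type bookkeeping consistent with $\b>3$. The cleanest route is probably to compute $k_1(\widehat u)$ and $k_2(\widehat u)$ directly from the definitions of $k_1,k_2$ and the explicit form of $\widehat u$, showing both are $0$, rather than tracing the whole path—since $\vtx(\widehat u)=[k_1(\widehat u),k_2(\widehat u)]$ once admissibility is known, and admissibility via Corollary~\ref{CLR3.3} may in fact be the more economical check.
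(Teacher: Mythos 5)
You follow the same route as the paper (case analysis on the definition of $\widehat u$, tracing $\pth(\widehat u)$ through $\Gab$ and invoking Lemma~\ref{CLR3.10}), but the execution has genuine gaps, concentrated exactly where the paper has to work, namely the case $u\in\Pb$ with $b_l<3$, which you yourself leave unfinished. Three concrete problems. (1) You repeatedly misread rule \Eb: the edge of that type out of $[j,k]$ is labeled $a_{j+1}$ itself, not $a_{j+1}+1$; hence from $[0,0]$ the only edge to $[1,0]$ carries the label $0$, and every letter $c$ with $0<c<\lambda$ gives an \Ed self-loop at $[0,0]$. So for $u=a_1$ the word $\widehat u=1$ ends at $[0,0]$ (your reading would put it at $[1,0]$ and contradict the lemma), and the tail $1\cdots1\,b_l$ with $b_l\in\{1,2\}$ read from $[0,0]$ consists only of self-loops, so your ``more care is needed when $b_l=1$'' is a non-issue created by the same misreading. (2) Your claim that a prefix of $b$ has vertex of the form $[0,\cdot]$ is false: $k_1$ of a prefix of $b$ is positive whenever that prefix ends in $0\,1\cdots1$, and this is exactly what produces the paper's Case 3: at $\vtx(b_1\cdots b_{l-k_1(u^-)-2})=[j,\,l-k_1(u^-)-2]$ with $j>0$ and $b_{l-k_1(u^-)-1}=2$, the letter $\widehat b_{l-k_1(u^-)-1}=1=a_{j+1}$ labels an \Eb edge to $[j+1,0]$, and only the next letter $\lambda-1$ brings the path back to $[0,0]$. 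By assuming the vertex is $[0,\cdot]$ you skip this branch entirely. (In the easier subcase $b_l\ge3$ the same wrong vertex claim happens to be harmless, because $a_{j+1}\le 1<b_l-1$ for every possible $j$.) (3) Your assertion that ``the structure of $a$ forces $b_m\ge2$'' at position $m=l-k_1(u^-)-1$ is false: $b_{l-k_1(u^-)-1}=1$ does occur (the paper's Case 4); there $\widehat b_{l-k_1(u^-)-1}=0$ labels an \Eb edge to $[1,0]$, the letter $\lambda-1$ then leads to $[0,0]$, and one additionally needs the fact that in this situation $j=0$ --- which follows from $\s^m b\succeq a$ for all $m$ (so $b$ contains no factor $0\,1\cdots1\,0$), a point any complete proof must supply.

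Your closing suggestion --- compute $k_1(\widehat u)$ and $k_2(\widehat u)$ directly and check admissibility via Corollary~\ref{CLR3.3} instead of tracing the path --- is a viable alternative in principle, but it is only stated, not carried out; as written, the decisive case of the lemma remains unproved, and the intermediate claims listed above would in any case have to be corrected.
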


\begin{proof}
If $u=\e$, this is clear since $\widehat{u}=\e$. 
Let $u=a_1\cdots a_l\in\Pa\backslash\{\e\}$. 
Then $\widehat{u}=\widehat{a}_1\cdots\widehat{a}_l$ where $\widehat{a}_1\cdots\widehat{a}_{l-1}=a_1\cdots a_{l-1}$ and 
$$a_l<\widehat{a}_l=a_l+1\le \textcolor{black}{\k+1}\le \lambda-1.$$
Thus $\pth(\widehat{a}_1\cdots\widehat{a}_{l-1})=\pth(a_1\cdots a_{l-1})$ is in $\Gab$, and by Lemma~\ref{grapha} the edge $[l-1, 0]\to[0, 0]$ labeled by $\widehat{a}_l=a_l+1$ is in $\Eab$. 
By Lemma~\ref{CLR3.10} the statement of the lemma follows. 

Let $u=b_1\cdots b_l\in\Pb\backslash\{\e\}$. 
If $b_l\ge\textcolor{black}{\k+2}$, then $\widehat{b}_{l}=b_l-1$ and $\widehat{b}_{i}=b_i$ for $i=1,\cdots,l-1$. 
By Lemma~\ref{CLR3.10}, $\pth(\widehat{b}_1\cdots\widehat{b}_{l-1})=\pth(b_1\cdots b_{l-1})$ is in $\Gab$. 
We can write $[j, l-1]=\vtx(b_1\cdots b_{l-1})$ for $j=k_1(b_1\cdots b_{l-1})$. 
Since $a_{j+1}(=0\,\text{or}\,\k)<b_l-1<b_l$, it follows from \Ed that the edge $[j, l-1]\to[0, 0]$ labeled by $\widehat{b}_{l}=b_l-1$ is in $\Eab$. 
Therefore we get $\pth(\widehat{u})$ is in $\Gab$ and $\vtx(\widehat{u})=[0, 0]$.

If $b_l<\textcolor{black}{\k+2}$, then $\widehat{b}_{i}=b_i$ for $i\neq l-k_1(u^-)-1, l-k_1(u^-)$ where $u^-=b_1\cdots b_{l-1}$ and $\widehat{b}_{l-k_1(u^-)-1}=b_{l-k_1(u^-)-1}-1$ and $\widehat{b}_{l-k_1(u^-)}=\lambda-1$.   
Then 
$$\pth(\widehat{b}_1\cdots\widehat{b}_{l-k_1(u^-)-2})=\pth(b_1\cdots b_{l-k_1(u^-)-2})$$
is in $\Gab$. 
We can write 
$$[j, l-k_1(u^-)-2]=\vtx(b_1\cdots b_{l-k_1(u^-)-2})$$
for $j=k_1(b_1\cdots b_{l-k_1(u^-)-2})$.

(Case 1) : When $b_{l-k_1(u^-)-1}\ge\textcolor{black}{\k+2}$, we have $a_{j+1}(=0\,\text{or}\,\k)<b_{l-k_1(u^-)-1}-1<b_{l-k_1(u^-)-1}$. Thus there is an edge from $[j, l-k_1(u^-)-2]$ to $[0, 0]$ labeled by $b_{l-k_1(u^-)-1}-1$ and $\pth(\widehat{b}_{1}\cdots\widehat{b}_{l-k_1(u^-)-1})$ is in $\Gab$. 
Since $\widehat{b}_{l-k_1(u^-)}=\lambda-1$, $\widehat{b}_i=\k\, (l-k_1(u^-)+1\le i\le l-1)$ and 
\begin{align*}
\widehat{b}_l=
\left\{\begin{array}{ll}\k \,\,\text{or}\,\, \textcolor{black}{\k+1} & (k_1(u^-)>0),\\
\lambda-1 & (k_1(u^-)=0),\end{array}\right. 
\end{align*}
all edges in $\pth(\widehat{b}_{l-k_1(u^-)}\cdots\widehat{b}_{l})$ are self-loops on $[0, 0]$. 

(Case 2) : The case of $b_{l-k_1(u^-)-1}=\textcolor{black}{\k+1}$ and $j=0$ is similar to Case 1.

(Case 3) : When $b_{l-k_1(u^-)-1}=\textcolor{black}{\k+1}$ and $j>0$, $\widehat{b}_{l-k_1(u^-)-1}=\k$ labels the edge $[j, l-k_1(u^-)-2]\to[j+1, 0]$ and $\widehat{b}_{l-k_1(u^-)}=\lambda-1$ labels the edge $[j+1, 0]\to[0, 0]$. 
By the same argument as in Case 1, 
all edges in $\pth(\widehat{b}_{l-k_1(u^-)+1}\cdots\widehat{b}_{l})$ are self-loops on $[0, 0]$. Thus we have $\pth(\widehat{u})$ is in $\Gab$ and $\vtx(\widehat{u})=[0, 0]$.

(Case 4) : When $b_{l-k_1(u^-)-1}=\textcolor{black}{\k>0}$, we have $j=0$. \textcolor{black}{If $\k=1$}, then $\widehat{b}_{l-k_1(u^-)-1}=0$ labels the edge $[0, l-k_1(u^-)-2]\to[1, 0]$ and $\widehat{b}_{l-k_1(u^-)}=\lambda-1$ labels the edge $[1, 0]\to[0, 0]$.
\textcolor{black}{If $\k\ge2$, then $\widehat{b}_{l-k_1(u^-)-1}=\k-1$ and $0<\widehat{b}_{l-k_1(u^-)-1}<\lambda$. Thus $\widehat{b}_{l-k_1(u^-)-1}$ labels the edge $[0, l-k_1(u^-)-2]\to[0, 0]$ and $\widehat{b}_{l-k_1(u^-)}=\lambda-1$ labels the edge $[0, 0]\to[0, 0]$.}
By the same argument as in Case 1 all edges in $\pth(\widehat{b}_{l-k_1(u^-)+1}\cdots\widehat{b}_{l})$ are self-loops on $[0, 0]$. 
\textcolor{black}{Therefore} $\pth(\widehat{u})$ is in $\Gab$ and $\vtx(\widehat{u})=[0, 0]$.
\end{proof}

We extend the transformation $\Lab\ni w\mapsto\widehat{w}$ to any word $w\in\Lab$ by
\begin{eqnarray*}
\widehat{w}
\coloneq 
  \begin{cases}
     w & \quad\text{if}\,\,\,s(w)=\e, \\
     v\widehat{u} & \quad\text{if}\,\,\,s(w)=u\neq\e.
  \end{cases}
\end{eqnarray*}
We define the transformation $\Lab\ni w\mapsto\widetilde{w}$ by
\begin{eqnarray}
\widetilde{w}
\coloneq\widehat{(\widehat{w})}\coloneq
  \begin{cases}
     \widehat{w} & \quad\text{if}\,\,\,s(\widehat{w})=\e, \\
     v'\widehat{u'} & \quad\text{if}\,\,\,s(\widehat{w})=u'\neq\e
  \end{cases}\label{hat}
\end{eqnarray}
where $\widehat{w}=v's(\widehat{w})$. 

\begin{lem}\label{00}
For $w\in\Lab$, $s(w)=\e$ if and only if $\vtx(w)=[0, 0]$.
\end{lem}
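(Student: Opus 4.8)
The plan is to simply unwind the definitions, since $\vtx(w)$ is defined to be $[k_1(w),k_2(w)]$ and $s(w)$ is defined directly in terms of $k_1(w)$ and $k_2(w)$ in \eqref{decom}. First I would prove the implication $\vtx(w)=[0,0]\Rightarrow s(w)=\e$: if $\vtx(w)=[0,0]$ then by the definition of $\vtx$ we have $k_1(w)=k_2(w)=0$, so we fall into the third case of \eqref{decom} and conclude $s(w)=\e$.

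For the converse, $s(w)=\e\Rightarrow\vtx(w)=[0,0]$, I would argue by ruling out the other two cases of \eqref{decom}. Recall from the paragraph introducing \eqref{decom} that $|s^a(w)|=k_1(w)$ and $|s^b(w)|=k_2(w)$, since $s^a(w)\in\Pa$ and $s^b(w)\in\Pb$ are the largest suffixes of $w$ of those forms. If $k_1(w)>k_2(w)$, then $s(w)=s^a(w)$ with $|s^a(w)|=k_1(w)\ge 1$, hence $s(w)\neq\e$; likewise if $k_1(w)<k_2(w)$, then $s(w)=s^b(w)$ with $|s^b(w)|=k_2(w)\ge 1$, hence $s(w)\neq\e$. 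So $s(w)=\e$ forces the remaining possibility $k_1(w)=k_2(w)=0$ (consistent with \eqref{k}), i.e. $\vtx(w)=[k_1(w),k_2(w)]=[0,0]$.

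There is no substantial obstacle here; the only point to keep in mind is the identity $|s^a(w)|=k_1(w)$, $|s^b(w)|=k_2(w)$, which is exactly what links the empty-suffix condition to the vanishing of $k_1$ and $k_2$. Alternatively, one could phrase the argument graph-theoretically via Lemma~3.9 of \cite{CLR}, noting that $\pth(w)$ terminates at $[k_1(w),k_2(w)]$ and hence at $[0,0]$ precisely when $k_1(w)=k_2(w)=0$, but the direct route through the definitions above is cleaner and self-contained.
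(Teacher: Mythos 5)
Your proof is correct and follows essentially the same route as the paper, which simply observes that $\vtx(w)=[k_1(w),k_2(w)]$ and $|s(w)|=\max\{k_1(w),k_2(w)\}$, so $\vtx(w)=[0,0]$ holds exactly when $|s(w)|=0$, i.e.\ $s(w)=\e$. Your case-by-case unwinding of \eqref{decom} is just a slightly more explicit version of that same identity, so there is nothing to add.
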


\begin{proof}
Since $\vtx(w)=[k_1(w), k_2(w)]$ and $|s(w)|=\max\{k_1(w), k_2(w)\}$, $\vtx(w)=[0, 0]$ if and only if $|s(w)|=0$, and hence if and only if $s(w)=\e$.
\end{proof}

The following lemma can be proved in the same way as Lemma 2.1 in \cite{PS1}.
\begin{lem}[{\cite[Lemma 2.1]{PS1}}]\label{PS2.1}
\mbox{}
\begin{enumerate}
     \item Let $c=c_1\cdots c_k$ and $d=d_1\cdots d_l$ be two prefixes of $a$. If $cd\in\Lab$, then $cd$ is a prefix of $a$.
     \vspace{0.2cm}
     \item Let $c=c_1\cdots c_k$ and $d=d_1\cdots d_l$ be two prefixes of $b$. If $cd\in\Lab$, then $cd$ is a prefix of $b$.
\end{enumerate}
\end{lem}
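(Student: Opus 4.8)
The plan is to obtain $cd$ as a prefix of $a$ (resp.\ $b$) by a squeezing argument: the hypothesis $cd\in\Lab$ supplies one lexicographic inequality through Corollary~\ref{CLR3.3}, and the self‑comparison property of $a$ (resp.\ $b$) recorded just after that corollary supplies the reverse one, so that the two together pin $cd$ down. I will write out (i); part (ii) is the mirror image, with $a,\preceq$ replaced by $b,\succeq$ throughout.

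For (i), there is nothing to prove if $c=\e$ or $d=\e$, so assume $k,l\ge 1$; since $c,d$ are prefixes of $a$ we have $c=a_1\cdots a_k$, $d=a_1\cdots a_l$, and $cd=a_1\cdots a_k a_1\cdots a_l$ is a word of length $k+l$. Because $cd\in\Lab$, Corollary~\ref{CLR3.3} applied with the running index equal to $1$ gives $a_1\cdots a_{k+l}\preceq cd$. For the opposite inequality I would use $a\preceq\s^k a$: at the first coordinate where $a$ and $\s^k a=(a_{k+1},a_{k+2},\dots)$ disagree, $a$ has the smaller entry, so restricting to the first $l$ coordinates yields $a_1\cdots a_l\preceq a_{k+1}\cdots a_{k+l}$ as equal‑length words. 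Prepending the common word $a_1\cdots a_k$ preserves the order, hence $cd=(a_1\cdots a_k)(a_1\cdots a_l)\preceq(a_1\cdots a_k)(a_{k+1}\cdots a_{k+l})=a_1\cdots a_{k+l}$. Combining the two bounds forces $cd=a_1\cdots a_{k+l}$, i.e.\ $cd$ is a prefix of $a$.

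Part (ii) runs the same way: from $cd\in\Lab$ and Corollary~\ref{CLR3.3} we get $cd\preceq b_1\cdots b_{k+l}$, while $\s^k b\preceq b$ restricts to $b_{k+1}\cdots b_{k+l}\preceq b_1\cdots b_l$ on the first $l$ coordinates, so $cd=(b_1\cdots b_k)(b_1\cdots b_l)\succeq(b_1\cdots b_k)(b_{k+1}\cdots b_{k+l})=b_1\cdots b_{k+l}$, and again $cd=b_1\cdots b_{k+l}$. I do not expect a genuine obstacle; the only points that deserve an explicit line are the two elementary facts used above — that a one‑sided lexicographic inequality between infinite sequences descends to the corresponding inequality between their length‑$l$ prefixes, and that this order on equal‑length words is stable under prepending a fixed word — both immediate from the definition of $\prec$.
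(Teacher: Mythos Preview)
Your argument is correct and uses the same two ingredients as the paper's proof (which is the one from \cite{PS1}): the admissibility bound $a_1\cdots a_{k+l}\preceq cd$ from Corollary~\ref{CLR3.3} and the self-comparison $a\preceq\sigma^k a$ to force the reverse inequality. The paper phrases it by contradiction (assume $cd$ is not a prefix, locate the first discrepancy, and contradict $\sigma^k a\succeq a$), while you write it as a direct squeeze; these are the same argument in different clothing.
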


\begin{lem}\label{w^}
Let $w\in\Lab$.
\begin{enumerate}
 \item If $s(w)\in\Pb$ then $\widehat{w}\in\Lab$ and $s(\widehat{w})=\e$.
 \item If $s(w)\in\Pa$ then $\widehat{w}\in\Lab$ and $s(\widehat{w})\in\Pb$.
 \item We have $\widetilde{w}\in\Lab$ and $s(\widetilde{w})=\e$.
\end{enumerate}
\end{lem}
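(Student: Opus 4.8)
The plan is to reduce the statement to the already-established Lemma~\ref{u^} and the structural Lemma~\ref{PS2.1}, using the graphical description of $\Gab$. The key point is that $\widehat{w}$ only modifies the suffix $s(w)$ of $w$, replacing it by $\widehat{s(w)}$, while keeping the prefix $v$ (with $w = v\,s(w)$) untouched. Since, by Lemma~\ref{u^}, the path $\pth(\widehat{u})$ runs from $[0,0]$ back to $[0,0]$ when $u \in \P$, and since $w = vu$ is admissible means $\pth(v)$ ends at the source of $\pth(u)$, the main work is to verify that the concatenated path $\pth(v)\pth(\widehat{u})$ is still a legal path in $\Gab$ — i.e. that the first edge of $\pth(\widehat{u})$ can indeed be appended at the terminal vertex $\vtx(v)$ of $\pth(v)$. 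By Lemma~\ref{CLR3.10} this yields $\widehat{w}\in\Lab$, and the terminal vertex of $\pth(\widehat{w})$ equals $\vtx(\widehat{u})$.

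For (i), suppose $s(w) = u \in \Pb\setminus\{\e\}$, so $k_2(w) > k_1(w)$ and $\vtx(w) = [k_1(w), k_2(w)]$ with second coordinate the length $|u|$. Write $w = v u$. I would first check that $\vtx(v) = \vtx(\widehat{\,\cdot\,}\text{-prefix})$ coincides with the source vertex of the first edge of $\pth(\widehat{u})$ as computed in the proof of Lemma~\ref{u^}; the analysis there already splits into the cases $b_l \ge 3$ and $b_l < 3$ (Cases 1--4), and in each case the source vertex of the first modified edge is exactly $\vtx(b_1\cdots b_{i-1})$ for the appropriate index $i$, which is precisely $\vtx$ of the matching prefix of $w$ since $u$ is a suffix of $w$ equal to a prefix of $b$ of length $k_2(w)$. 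Hence $\pth(w) = \pth(v)\pth(u)$ has the same vertices along $\pth(v)$ as $\pth(\widehat{w}) = \pth(v)\pth(\widehat{u})$, and appending $\pth(\widehat{u})$ is legal. Then $\widehat{w} \in \Lab$ by Lemma~\ref{CLR3.10}, and $\vtx(\widehat{w}) = \vtx(\widehat{u}) = [0,0]$ by Lemma~\ref{u^}, so $s(\widehat{w}) = \e$ by Lemma~\ref{00}.

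For (ii), suppose $s(w) = u = a_1\cdots a_l \in \Pa\setminus\{\e\}$, so $k_1(w) > k_2(w)$. Here $\widehat{u} = a_1\cdots a_{l-1}(a_l+1)$ with $a_l + 1 \le 2 \le \lambda - 1$, so $\pth(\widehat{u})$ follows $\pth(a_1\cdots a_{l-1})$ and then takes the edge $[l-1,0]\to[0,0]$ labeled $a_l+1$ (Lemma~\ref{grapha}). As in (i), the prefix portion $\pth(a_1\cdots a_{l-1})$ of $\pth(\widehat{u})$ shares its vertices with the corresponding portion of $\pth(w)$, so $\pth(v)\pth(\widehat{u})$ is legal and $\widehat{w}\in\Lab$ by Lemma~\ref{CLR3.10}; moreover $\vtx(\widehat{w}) = \vtx(\widehat{u}) = [0,0]$. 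But I must show $s(\widehat{w}) \in \Pb$, \emph{not} that it is $\e$ — the subtlety is that even though the path ends at $[0,0]$, the word $\widehat{w}$ may have a nonempty largest suffix lying in $\Pb$ (coming from the label $a_l+1$ possibly combining with earlier characters). Wait — Lemma~\ref{00} gives $s(\widehat{w}) = \e$ whenever $\vtx(\widehat{w}) = [0,0]$, which seems to contradict the claim $s(\widehat{w}) \in \Pb$; the resolution must be that $\Pb \ni \e$ is allowed, or (more likely) that the intended reading is $\vtx(\widehat{w}) \ne [0,0]$ in this case. I expect the actual situation is that when $s(w)\in\Pa$, applying $\widehat{\phantom{w}}$ only changes the \emph{last} coordinate (Remark~\ref{u^coor}), and the resulting terminal vertex is of the form $[0, k_2(\widehat w)]$ with $k_2(\widehat w) > 0$, so $s(\widehat w) = s^b(\widehat w) \in \Pb\setminus\{\e\}$; here I would need to recheck the edge structure of Lemma~\ref{grapha} (the edge from $[k,0]$ labeled $\lambda$ versus labeled $i\le\lambda-1$) to pin down $\vtx(\widehat w)$ exactly. \textbf{This is the step I expect to be the main obstacle}: correctly identifying $\vtx(\widehat{w})$ when $s(w)\in\Pa$, since the transformation $w\mapsto\widehat w$ interacts delicately with how the modified last symbol sits relative to $b$.

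Finally, (iii) follows by applying (i) and (ii) in sequence: by \eqref{hat}, $\widetilde{w} = \widehat{(\widehat{w})}$. If $s(\widehat w) = \e$ (which by (i) happens when $s(w)\in\Pb$, and also when $s(w)=\e$), then $\widetilde w = \widehat w\in\Lab$ with $s(\widetilde w)=\e$. If $s(w)\in\Pa$, then by (ii) $\widehat w\in\Lab$ with $s(\widehat w)\in\Pb$; applying (i) to $\widehat w$ in place of $w$ gives $\widetilde w = \widehat{(\widehat w)}\in\Lab$ with $s(\widetilde w) = \e$. In every case $\widetilde w\in\Lab$ and $s(\widetilde w)=\e$, which completes the proof.
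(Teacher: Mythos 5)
Your overall plan coincides with the paper's (reduce everything to Lemma~\ref{u^} via the graph $\Gab$, then chain (i) and (ii) to get (iii), which you do exactly as the paper does), but the central reduction is not carried out and, in part (ii), is actually false as stated. The claim that ``the prefix portion of $\pth(\widehat{u})$ shares its vertices with the corresponding portion of $\pth(w)$, so appending $\pth(\widehat{u})$ at $\vtx(v)$ is legal and $\vtx(\widehat{w})=\vtx(\widehat{u})$'' needs two things you do not supply. First, one must know $k_2(v)=0$: if $v$ ended in a nonempty prefix of $b$, the vertices visited while reading $u$ after $v$ would differ from those of $\pth(u)$ read from $[0,0]$. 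This is where Lemma~\ref{PS2.1}(ii) (maximality of $s(w)$) enters; the paper deduces $\vtx(v)=[j,0]$, $j>0$, before doing anything else, and you only allude to this check without performing it. Second, even with $\vtx(v)=[j,0]$, the argument breaks exactly when the modification of $u$ touches its first letter $b_1$ (the paper's Cases i-2 and i-4: $l=1$, or $b_2\cdots b_{l-1}=a_1\cdots a_{l-2}$ with $b_l\le 2$): there the first modified edge must be read from $[j,0]\neq[0,0]$, so Lemma~\ref{u^} says nothing about it, and one must verify directly from Lemma~\ref{grapha} that the edge $[j,0]\to[0,0]$ labeled $\lambda-1$ exists. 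Your conclusion in (i) is correct, but for the nontrivial sub-cases the stated reasoning does not prove it.

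The more serious gap is in (ii), which you yourself flag as ``the main obstacle'' and leave open. Your intermediate assertion $\vtx(\widehat{w})=\vtx(\widehat{u})=[0,0]$ is wrong in general: writing $u^-=a_1\cdots a_{l-1}$ and $k=k_2(vu^-)$, one has $\vtx(vu^-)=[l-1,k]$, and if $k>0$ and $b_{k+1}=a_l+1=\widehat{a}_l$ then the only available edge is the one of type \Ec, landing at $[0,k+1]\neq[0,0]$; only when $k=0$ or $b_{k+1}>\widehat{a}_l$ does one land at $[0,0]$. The ``contradiction'' with Lemma~\ref{00} that you try to resolve is resolved the other way from your preferred guess: $\e\in\Pb$ by definition, so the statement $s(\widehat{w})\in\Pb$ deliberately covers both outcomes ($s(\widehat{w})=\e$ when the terminal vertex is $[0,0]$, and $s(\widehat{w})\in\Pb\backslash\{\e\}$ with $k_2(\widehat{w})=k+1$ when it is $[0,k+1]$); your guess that $\vtx(\widehat{w})\neq[0,0]$ always holds in this case is incorrect. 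Since pinning down these two sub-cases is precisely the content of the paper's Case ii-1/ii-2 analysis and you do not do it, part (ii) — and hence the lemma — is not established by the proposal; part (iii) is fine once (i) and (ii) are in place, and is the same chaining the paper uses.
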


\begin{proof}
Let $w=vs(w)\in\Lab$. If $s(w)=\e$, then it is clear that $s(\widehat{w})=\e$. 
Now assume $s(w)\neq\e$. If $\vtx(v)=[0, 0]$, it follows from Lemma~\ref{u^} that $\pth(\widehat{w})$ is in $\Gab$ and $\vtx(\widehat{u})=\vtx(\widehat{w})=[0, 0]$, which means $s(\widehat{w})=\e$.

(i) We may assume that $w=vu, s(w)=u\in\Pb\backslash\{\e\}$ and $\vtx(v)\neq[0, 0]$. 
We write 
$$s(w)=u=b_1\cdots b_l, l=k_2(w)>0 \quad\text{and}\quad u^-=b_1\cdots b_{l-1}.$$
Since $\vtx(v)\neq[0, 0]$, Lemma~\ref{PS2.1}(ii) implies that $\vtx(v)=[j, 0]$ for some $j>0$.
By Lemma~\ref{grapha}, $b_1$ labels the edge $[j, 0]\to[0, 1]$. We divide the proof into the following four cases. In these cases we will show $\vtx(\widehat{w})=[0,0]$, which implies $s(\widehat{w})=\e$. 

Let $p_1=\Zab(b_1)$. If $p_1>0$, by \eqref{zab} we have 
$$b_2b_3\cdots b_{p_1+1}=\textcolor{black}{0\k\cdots \k}, b_{p_1+2}\ge\textcolor{black}{\k+1}$$
and 
\begin{align*}
[j, 0]\xrightarrow{b_1}[0, 1]\xrightarrow{b_2}[1, 2]\xrightarrow{b_3}\cdots\xrightarrow{b_{p_1+1}}[p_1, p_1+1]\xrightarrow{b_{p_1+2}}[0, p_1+2].
\end{align*}
If $p_1=0$ we have $b_2\ge1$ and $[j, 0]\xrightarrow{b_1}[0, 1]\xrightarrow{b_2}[0, 2]$.


(Case i-1): When $l\ge p_1+3$ one can easily check that $l-k_1(u^-)-1\ge2$, which means that $\widehat{b}_1=b_1$. 
\textcolor{black}{Hence} this case can be shown by Lemma~\ref{u^}.


(Case i-2): When $l=1$ we have $s(w)=u=b_1$ and $\vtx(v)=[j, 0]$ for some $j>0$.
Then there exists only one edge $\vtx(v)=[j, 0]\to[0,0]$ labeled by $\widehat{b}_1=b_1-1=\lambda-1$. Hence $\pth(\widehat{w})=\pth(v\widehat{b}_1)$ is in $\Gab$ and $\vtx(\widehat{w})=\vtx(v\widehat{b}_1)=[0, 0]$.

(Case i-3): The case when $l=p_1+2$ and $b_l\ge\textcolor{black}{\k+2}$ can be proved easily by Lemma~\ref{u^} because of $\widehat{b}_1=b_1$.

(Case i-4): Suppose that $l=p_1+2$ and $b_l=\textcolor{black}{1, \cdots, \k+1}$ or that $2\le  l\le p_1+1$ and $p_1>0$.
In the case, since $b_2\cdots b_{l-1}=a_1\cdots a_{l-2}$, we have $k_1(u^-)=l-2$ and $b_{l-k_1(u^-)-1}=b_1$. 
Therefore we change $b_1$ and $b_2$ into $\lambda-1$ by the definition of $\widehat{u}$. 
We have $\vtx(v)=[j, 0]$ for some $j>0$ and there exists the edge $[j, 0]\to[0, 0]$ labeled by $\lambda-1$ and all edges in $\pth(\widehat{b_2}\cdots\widehat{b}_{l})=\pth(\widehat{b}_2b_3\cdots b_l)$ are self-loops on $[0, 0]$ since $0<\widehat{b}_2=\lambda-1<\lambda$ and $0<b_i<\lambda$ for $i=3,\cdots, l$. 
Therefore $\pth(\widehat{w})=\pth(v\widehat{u})$ is in $\Gab$ and $\vtx(\widehat{w})=[0, 0]$.


(ii) Let $w=vs(w)$, $s(w)\in\Pa\backslash\{\e\}$ and $\vtx(v)\neq[0, 0]$. 
We write 
$$s(w)=u=a_1\cdots a_l, l=k_1(w)>0\quad\text{and}\quad u^-=a_1\cdots a_{l-1}.$$
Since $\widehat{a}_i=a_i$ for $i=1,\cdots, l-1$, it follows that $\pth(vu^-)$ is in $\Gab$.

(Case ii-1): If $k_2(vu^-)=0$, then 
$$\vtx(vu^-)=[k_1(vu^-), k_2(vu^-)]=[l-1, 0].$$ 
Since $a_l<a_l+1=\widehat{a}_l<\lambda=b_1$, $\widehat{a}_l=a_l+1$ labels the edge $[l-1, 0]\to[0, 0]$.
Therefore $\pth(\widehat{w})=\pth(v\widehat{u})$ is in $\Gab$ and $\vtx(\widehat{w})=\vtx(\widehat{u})=[0, 0]$, 
\textcolor{black}{and thus} $\widehat{w}\in\Lab$ and $\vtx(\widehat{w})=[0, 0]$.

(Case ii-2): If $k\coloneq k_2(vu^-)>0$, then $\vtx(vu^-)=[l-1, k]$.
Since $k>0$ and $k_2(w)=0$, we have $b_{k+1}\ge a_l+1=\widehat{a}_l$.
If $b_{k+1}>\widehat{a}_l$, then $\widehat{a}_l$ labels the edge $[l-1, k_2(vu^-)]\to[0, 0]$ by \Ed because $a_l<\widehat{a}_l<b_{k+1}$.
Hence $\pth(\widehat{w})$ is in $\Gab$ and $\vtx(\widehat{w})=[0, 0]$.

If $b_{k+1}=\widehat{a}_l$, then there is no edge $[l-1, k_2(vu^-)]\to[0, 0]$.
By the rule \Ec the edge $[l-1, k]\to[0, k+1]$ is labeled by $b_{k+1}=\widehat{a}_l$. 
Thus $\pth(\widehat{w})=\pth(v\widehat{u})$ is in $\Gab$ and $\widehat{w}\in\Lab$. 
However, in this case 
$$\vtx(\widehat{w})=[0, k+1]\neq[0, 0].$$ 
Let $\widehat{w}=v's(\widehat{w})$. Since $\vtx(\widehat{w})=[0, k+1]$, we have $k_1(\widehat{w})=0$, $k_2(\widehat{w})=k+1>0$ and $s(\widehat{w})\in\Pb\backslash\{\e\}$.

(iii) Let $w\in\Lab$. If $s(w)=\e$, then it is clear that $s(\widetilde{w})=\e$ because $\widetilde{w}=w$. 

If $s(w)\in\Pb\backslash\{\e\}$, then $s(\widehat{w})=\e$ by (i). Therefore $\widetilde{w}=\widehat{w}\in\Lab$ and $s(\widetilde{w})=s(\widehat{w})=\e$.

Let $s(w)\in\Pa\backslash\{\e\}$. By (ii) we have $s(\widehat{w})=\e$ or $s(\widehat{w})\in\Pb\backslash\{\e\}$. If $s(\widehat{w})=\e$ then $\widetilde{w}=\widehat{w}$ and we get $s(\widetilde{w})=\e$. When $s(\widehat{w})\in\Pb\backslash\{\e\}$, this case has already proved in (i).
\textcolor{black}{Hence} $\widetilde{w}=\widehat{(\widehat{w})}\in\Lab$ and $\vtx(\widetilde{w})=[0, 0]$, that is, $s(\widetilde{w})=\e$.
\end{proof}

\def\La{\mathcal{L}^{a}}
\def\Lb{\mathcal{L}^{b}}
\def\Le{\mathcal{L}^{\e}}

Let 
$$\Lab=\Le\cup\La\cup\Lb$$
be the decomposition of $\Lab$ into three disjoint sets
\begin{align*}
&\Le\coloneq\{w\in\Lab:s(w)=\e\},\\
&\La\coloneq\left\{w\in\Lab:s(w)\in\Pa\backslash\{\e\}\right\}\text{and}\\
&\Lb\coloneq\left\{w\in\Lab:s(w)\in\Pb\backslash\{\e\}\right\}.
\end{align*}

\begin{remark}\label{w^coor}
If $w\in\Le$ then $w=\widetilde{w}$.
If $w\in\La$ then $w$ and $\widetilde{w}$ differ at most at three coordinates.
If $w\in\Lb$ then $w$ and $\widetilde{w}$ differ at most at two coordinates.
\end{remark}

\begin{lem}\label{PS2.2}
Fix \textcolor{black}{$\b>2$, $\k\in\N_0$ with $\k<\lceil\k/\b+\b\rceil-2$}. Let $\a=\k/\b$.
Let us define $h:\Lab\to\Lab$ by $h(w)=\widehat{w}$ for $w\in\Lab$.
\begin{enumerate}
     \item Let  $w=vu\in\Lab, s(w)=u$. Then $s(\widetilde{w})=\e$ and $\vtx(\widetilde{w})=[0, 0]$.
     \hspace{0.3cm}
     \item The restriction of the mapping $h$ to $\Le$ is one-to-one and satisfies $h(\Le)=\Le$.
     \hspace{0.3cm}
     \item The restriction of the mapping $h$ to $\La$ is one-to-one and satisfies $h(\La)\subset\Le\cup\Lb$.
     \hspace{0.3cm}
     \item Let $p_1\coloneq z^{\a, \b}(b_1)$, where $b_1$ the first character of $b$. The restriction of the mapping $h$ to $\Lb$ is at most $(p_1+3)$-to-one and satisfies $h(\Lb)\subset\Le$.
     \hspace{0.3cm}
     \item The mapping on $\Lab\ni w\mapsto\widetilde{w}$ is at most $(2p_1+7)$-to-one.
\end{enumerate}
\end{lem}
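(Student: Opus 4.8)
The plan is to prove parts (i)--(v) in order, since each part feeds into the next. Part (i) is essentially a restatement of Lemma~\ref{w^}(iii): for any $w\in\Lab$ with $w=vu$, $s(w)=u$, we have $\widetilde{w}\in\Lab$ and $s(\widetilde{w})=\e$, and then Lemma~\ref{00} gives $\vtx(\widetilde{w})=[0,0]$. So (i) requires only citing the work already done. For (ii), if $w\in\Le$ then $s(w)=\e$, so by definition $\widehat{w}=w$; hence $h|_{\Le}=\mathrm{id}_{\Le}$, which is trivially injective with image $\Le$. For (iii), if $w\in\La$ then Lemma~\ref{w^}(ii) gives $\widehat{w}\in\Lab$ with $s(\widehat{w})=\e$ or $s(\widehat{w})\in\Pb\setminus\{\e\}$, i.e. $\widehat{w}\in\Le\cup\Lb$; injectivity of $h|_{\La}$ follows from the fact that $\widehat{w}=v\widehat{u}$ where $u=s(w)=a_1\cdots a_l$ and $\widehat{u}=a_1\cdots a_{l-1}(a_l+1)$, so $w$ is recovered from $\widehat{w}$ by locating the suffix structure: given $\widehat{w}$ one reads off $v$ as the maximal prefix before the modified position and reconstructs $u$ uniquely (the last coordinate of $u$ is one less than the corresponding coordinate of $\widehat{u}$, and $l$ is determined since $a$ is a fixed known sequence $(0,1,1,\dots)$). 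I will spell this reconstruction out carefully.

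The heart of the lemma is (iv): counting the fibers of $h|_{\Lb}$. Here $w=vu$ with $u=s(w)=b_1\cdots b_l\in\Pb\setminus\{\e\}$, and by Lemma~\ref{w^}(i) $\widehat{w}\in\Le$. The key structural input is Remark~\ref{u^coor}: $u$ and $\widehat{u}$ differ in at most two coordinates, and the positions of these coordinates are dictated by $k_1(u^-)$ (equivalently by how far the suffix $b_1\cdots b_l$ agrees with a prefix of $a$ after its first letter). So given a word $w'\in\Le$, I need to bound the number of pairs $(v,u)$ with $v\widehat{u}=w'$. The decomposition $w'=v s'$ into $v$ and a prefix-of-$\P$ suffix is not unique for $w'$ itself (it is, by \eqref{decom}, but the point is that the modification site of $\widehat{u}$ need not coincide with the suffix boundary of $w'$); rather, the modified coordinates of $\widehat u$ lie within the last $\max\{l, l-k_1(u^-)\} \le l$ letters, and the relation $b_2\cdots b_{k} = a_1\cdots a_{k-1}$ forces $l$ to be constrained relative to $p_1=z^{\a,\b}(b_1)$. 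Concretely, I will argue that $\widehat w$ determines $v$ up to at most $p_1+3$ choices: the four cases (i-1)--(i-4) in the proof of Lemma~\ref{w^}(i) correspond to $l\ge p_1+3$ (one reconstruction, via the pattern of Lemma~\ref{u^}), $l=1$, $l=p_1+2$ with $b_l\ge 3$, and the remaining range $2\le l\le p_1+2$ with the constrained tail — and in that last regime there are at most $p_1+1$ values of $l$ to try, each yielding at most one $(v,u)$. Summing the case counts gives the bound $p_1+3$. The main obstacle is exactly this: making the fiber-counting rigorous requires showing that for each admissible target length $l$ the preimage $(v,u)$ is \emph{uniquely} determined, which means verifying that the map $u\mapsto\widehat u$ is injective on each relevant slice of $\Pb$ and that $v$ is forced once $l$ and $u$ are known (because $v = w'$ with its last $|u|$ letters stripped, after undoing the coordinate changes). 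This is where one must be careful that two different suffix lengths $l\ne l'$ cannot produce the same $\widehat w$ from different $v$'s — here the fixed known structure of $a$ and $b$ and Lemma~\ref{PS2.1} do the work.

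Finally (v) follows by composing (iii) and (iv): the map $w\mapsto\widetilde w = \widehat{(\widehat w)}$ factors through $h$. For $w\in\Le$, $\widetilde w = w$ (one-to-one). For $w\in\La$, $h(w)\in\Le\cup\Lb$; if $h(w)\in\Le$ then $\widetilde w=h(w)$ and by (iii) the fiber has size $\le 1$, while if $h(w)\in\Lb$ then applying $h$ again lands in $\Le$ with fiber $\le p_1+3$ over that intermediate word, and $h|_{\La}$ contributes fiber $\le 1$, so the composite fiber over a point of $\Le$ has size $\le p_1+3$ from the $\La$-part. For $w\in\Lb$, $\widetilde w = \widehat w$ and the fiber has size $\le p_1+3$. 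Since $\Lab = \Le\sqcup\La\sqcup\Lb$ and the images $\widetilde{\Le}, \widetilde{\La}, \widetilde{\Lb}$ all sit inside $\Le$, a given $w'\in\Le$ can receive at most $1$ preimage from $\Le$, at most $p_1+3$ from $\La$ (via the two-step count, noting the $\Lb$-intermediate words mapping to $w'$ number at most $p_1+3$ and each pulls back to at most one element of $\La$), and at most $p_1+3$ from $\Lb$, for a total of at most $2p_1+7$. I will present the bookkeeping as a short case analysis on which of the three pieces of $\Lab$ a preimage comes from, citing (ii)--(iv) for each piece.
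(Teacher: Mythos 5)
Your proposal is correct in outline and takes essentially the same route as the paper: (i)--(iii) rest on the same citations (Lemmas~\ref{00} and \ref{w^}) and the explicit form of $\widehat{\Pa}$, part (iv) uses the identical fiber count (at most one preimage for each suffix length $1\le|u|\le p_1+2$, and at most one preimage among all lengths $|u|\ge p_1+3$, the latter excluded by comparing with the largest suffix of $\widehat{w}$ lying in $\widehat{\Pb}$ and invoking Lemma~\ref{PS2.1} together with the maximality of $s(w)$, which is exactly the paper's mechanism), and part (v) is the same $1+(p_1+3)+(p_1+3)$ bookkeeping over $\Le\sqcup\La\sqcup\Lb$. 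The only cosmetic slip is that your case list in (iv) counts the length $l=p_1+2$ twice (the cases $b_{p_1+2}\ge3$ and $b_{p_1+2}<3$ are mutually exclusive for a fixed $b$), so the admissible lengths indeed number at most $p_1+3$, matching the paper.
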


\begin{proof}
(i) This follows from Lemma~\ref{00} and Lemma~\ref{w^}.

(ii) If $u=\e$, then we have $\widehat{w}=w$. Therefore the restriction of $h$ to $\Le$ is one-to-one and $h(\Le)=\Le$.

(iii) Let $\widehat{\Pa}\coloneq\{\widehat{u}:u\in\Pa\backslash\{\e\}\}$. It is easy to check that
\begin{align}\label{Pa^}
\widehat{\Pa}=\{\widehat{a}_1, \widehat{a}_1\widehat{a}_2, \widehat{a}_1\widehat{a}_2\widehat{a}_3,\cdots\}=\{\textcolor{black}{1, 0(\k+1), 0\k(\k+1), 0\k\k(\k+1), \cdots}\}
\end{align}
Let $w, w'\in\La$ and suppose that $\widehat{w}=\widehat{w'}$. 
Let $w=vu, w'=v'u'$ and $u=s(w), u'=s(w')$.
If $|u|=1$, then the last letter of $\widehat{w}$ is $\widehat{u}=\widehat{a}_1=1$. 
Since $\widehat{w}=\widehat{w'}$, the last letter of $\widehat{w'}$ is $\widehat{u}=1$, and the last letter of $\widehat{u'}$ is also $1$.
Therefore $|u'|=1=|u|$ and $u'=u$ by \eqref{Pa^}. \textcolor{black}{This implies that} $w=w'$.

If $|u|>1$, then the first letter of $\widehat{u}$ is $0$ and the last of $\widehat{u}$ is $\textcolor{black}{\k+1}$ and the others of $\widehat{u}$ are $1$. 
\textcolor{black}{Hence} $\widehat{u}=\widehat{u'}$, which means $u=u'$.
Thus the restriction of $h$ to $\La$ is one-to-one.
By Lemma~\ref{w^} (ii) we also have $h(\La)\subset\Le\cup\Lb$.

(iv) Let $\widehat{\Pb}\coloneq\{\widehat{u}:u\in\Pb\backslash\{\e\}\}$.
The list $\widehat{\Pb}$ is ordeblack by increasing length.
The first $p_1+2$ words of $\widehat{\Pb}$ are
$$\widehat{b}_1, \widehat{b}_1b_2, \widehat{b}_1b_2b_3,\cdots,\widehat{b}_1b_2\cdots b_{p_1+2}$$
where $\widehat{b}_1=b_1-1$ or $b_1$. 
On the other hand, if $|\widehat{u}|>p_1+2$, then $\widehat{b}_1=b_1$.

Let $w'\in\{\widehat{w}:w\in\Lab\}$ and $\widehat{s}(w')\in\widehat{\Pb}$ be the largest suffix of $w'$. 
We set $w'=v'u'$, where $u'=\widehat{s}(w')$. 
Suppose that some word $w=vu\in\Lb$, $u=s(w)$ satisfies $\widehat{w}=w'$. 
Assume $|u|>|u'|$. 
Then $\widehat{u}\in\widehat{\Pb}$ implies that $\widehat{s}(w')$ is not maximal. 
This is a contradiction to the definition of $\widehat{s}(w')$. 
Therefore $|u|\le|u'|$ (i.e., $|\widehat{u}|\le|\widehat{s}(w')|$).

Suppose $p_1+3\le|u|<|u'|$. 
Then we have the first letter of $\widehat{u}$ is $b_1$ because $|u|\ge p_1+3$. 
By hypothesis $|u|<|u'|$, it follows that $v=v'c$ for some prefix $c$ of $u'$.
Hence 
$c$ and $u'$ begin with the same character, which is $b_1$.
Also the character following $c$ is the first character of $\widehat{u}$, which is $b_1$. 
Because $u'=c\widehat{u}=\widehat{s}(w')\in\widehat{\Pb}$,  
$cu$ is a prefix of $b$ by Lemma~\ref{PS2.1}, which contradicts the maximality of $u$. 
Thus we have $|u|=|u'|$ or $1\le|u|\le p_1+2$. 

On the other hand, if $k\ge 1$, then there is at most one $w=vu\in\Lab$ such that $w'=\widehat{w}=v\widehat{u}$, $|u|=k$ and $u\in\Pb\backslash\{\e\}$.
Therefore the mapping $w\mapsto\widehat{w}$ is at most $(p_1+3)$-to-one and it follows that the restriction of $h$ 
to $\Lb$ is at most $(p_1+3)$-to-one. 
Furthermore it is easy to check that $h(\Lb)\subset\Le$ by Lemma~\ref{w^} (i).
 
(v) Since $\Lab=\Le\sqcup\La\sqcup\Lb$, we need to consider the restriction of the mapping $w\mapsto\widetilde{w}\coloneq h^2(w)$ to following three sets: $\Le$, $\La$ and $\Lb$.

(Case 1): In Case 1, we consider $h^2|_{\Le}$.
By (ii), $h|_{\Le}$ is one-to-one. 

Since $h(\Le)=\Le$, we have $h^2|_{\Le}=(h|_{\Le})\circ(h|_{\Le})$. 
Therefore $h^2|_{\Le}$ is one-to-one.


(Case 2): In this case we consider $h^2|_{\La}$.
By (iii), $h|_{\La}$ is one-to-one.
 
By Lemma~\ref{w^} $h(\La)\subset\Le\sqcup\Lb$.
Therefore we have 
\begin{align*}
h^2|_{\La}(w)=
\begin{cases}
  &(h|_{\Le})\circ(h|_{\La})(w) \quad\text{if}\,\, w\in h^{-1}(\Le)\cap\La,\\
  &(h|_{\Lb})\circ(h|_{\La})(w) \quad\text{if} \,\,w\in h^{-1}(\Lb)\cap\La.
\end{cases}
\end{align*}
By (ii) and (iv), we have $h|_{\Le}$ is one-to-one and $h|_{\Lb}$ is at most $(p_1+3)$-to-one.
Thus $h^2|_{\La}$ is at most $(p_1+3)$-to-one.



(Case 3): In Case 3, we consider $h^2|_{\Lb}$.
By (iv) we have $h|_{\Lb}$ 
is $(p_1+3)$-to-one. Since $h(\Lb)\subset\Le$ we have 
$$h^2|_{\Lb}(w)=(h|_{\Le})\circ(h|_{\Lb})(w).$$
By (ii), $h|_{\Le}$ is one-to-one and therefore $h^2|_{\Lb}$ is at most $(p_1+3)$-to-one.

Therefore the mapping $w\mapsto\widetilde{w}$ is at most $(2p_1+7)$-to-one. 
\end{proof}

\section{\textcolor{black}{Proof of Theorem~\ref{main}}}\label{S.proof}
\subsection{Functions of bounded total oscillations}\label{bto}
In this section, for more details we refer the reader to \cite{PS1} and \cite{PS3}.
Let $C(\A^\Z)$ be the set of real-valued continuous functions on the full shift $\A^\Z$.
We define for $\bar{f}\in C(\A^\Z)$ and $i\in\Z$,
$$\delta_i(\bar{f})\coloneq\sup\{|\bar{f}(x)-\bar{f}(y)|:x, y\in \A^\Z, \text{where}\, x_k=y_k \,\text{for all}\, k\neq i\}$$
and
$$\|\bar{f}\|_\delta\coloneq\sum_{i\in\Z}\delta_i(\bar{f}).$$
A function is said to have \textit{bounded total oscillations} if $\|\bar{f}\|_\delta<\infty$. 

Let $X\subset \A^\Z$ be a subshift and $C(X)$ be the set of all continuous functions $f:X\to\R$. Every $f\in C(X)$ has a continuous extension $\bar{f}\in C(\A^\Z)$, i.e. $f(x)=\bar{f}(x)$ for all $x\in X$ (see \cite[Proposition 3.2]{PS3}). 
We write $\bar{f}\approx f$.
Also a function $f\in C(X)$ has \textit{bounded total oscillations} if 
$$\|f\|_\delta\coloneq\inf\{\|\bar{f}\|:\bar{f}\in C(A^\Z), f\approx\bar{f}\}<\infty.$$
Let $\mathcal{B}(X)$ be the set of bounded total oscillations with the norm 
$$\| f\|_\delta+\sup_{x\in X}|f(x)|.$$
Then $\mathcal{B}(X)$ is a Banach space (see \cite[Proposition 3.1]{PS3}).
The following lemmas are essential for the argument.
(See \cite{PS1} for more details).

\begin{lem}[{\cite[Lemma 2.4]{PS1}}]\label{PS2.4}
Let $x, y\in X$ and $\Gamma\coloneq\{j\in\Z:x_j\neq y_j\}$. Then for $\Lambda\subset\Z$, $f\in C(X)$ and $\bar{f}\in C(\A^\Z)$ with $\bar{f}\approx f$,
$$\sum_{i\in\Lambda}|f(\s^i x)-f(\s^i y)|\leq\sum_{i\in\Lambda}\sum_{j\in\Gamma}\delta_{j-i}(\bar{f})\leq\operatorname{Card}(\Gamma)\|\bar{f}\|_\delta.$$
\end{lem}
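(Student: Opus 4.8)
The plan is to reduce everything to the single‑coordinate oscillation quantities $\delta_m(\bar f)$ and then sum. First I would note that since $x,y\in X$ and $X$ is shift‑invariant, $\s^i x,\s^i y\in X$ for every $i\in\Z$, so that $f(\s^i x)=\bar f(\s^i x)$ and $f(\s^i y)=\bar f(\s^i y)$; hence it is enough to bound $|\bar f(\s^i x)-\bar f(\s^i y)|$ on the full shift $\A^\Z$. Next I would identify the coordinates where $\s^i x$ and $\s^i y$ disagree: since $(\s^i x)_k=x_{k+i}$, one has $(\s^i x)_k\neq(\s^i y)_k$ precisely when $k+i\in\Gamma$, i.e. when $k$ lies in $\Gamma-i\coloneq\{j-i:j\in\Gamma\}$.

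The core step is to pass from $\s^i x$ to $\s^i y$ by flipping one coordinate at a time. Enumerate $\Gamma-i=\{m_1,m_2,\dots\}$, ordering the $m_k$ by increasing absolute value, and set $z^{(0)}=\s^i x$, where $z^{(n)}$ is the point that agrees with $\s^i y$ on $\{m_1,\dots,m_n\}$ and with $\s^i x$ off that set. Then $z^{(n-1)}$ and $z^{(n)}$ differ only at the single coordinate $m_n$, so by the very definition of $\delta_{m_n}(\bar f)$ we get $|\bar f(z^{(n)})-\bar f(z^{(n-1)})|\le\delta_{m_n}(\bar f)$, and telescoping gives $|\bar f(z^{(n)})-\bar f(\s^i x)|\le\sum_{k=1}^n\delta_{m_k}(\bar f)$. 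Because of the chosen enumeration, $z^{(n)}\to\s^i y$ in the product topology, so continuity of $\bar f$ on the compact space $\A^\Z$ yields, after letting $n\to\infty$,
$$|\bar f(\s^i x)-\bar f(\s^i y)|\le\sum_{m\in\Gamma-i}\delta_m(\bar f)=\sum_{j\in\Gamma}\delta_{j-i}(\bar f).$$
(When $\Gamma$ is finite this is a finite telescoping and no limit is needed.) Summing over $i\in\Lambda$ gives the first inequality of the lemma.

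For the second inequality I would enlarge $\Lambda$ to all of $\Z$ and interchange the order of summation, which is legitimate since every $\delta_{j-i}(\bar f)\ge0$:
$$\sum_{i\in\Lambda}\sum_{j\in\Gamma}\delta_{j-i}(\bar f)\le\sum_{j\in\Gamma}\sum_{i\in\Z}\delta_{j-i}(\bar f)=\sum_{j\in\Gamma}\sum_{m\in\Z}\delta_m(\bar f)=\operatorname{Card}(\Gamma)\,\|\bar f\|_\delta,$$
using that for each fixed $j$ the map $i\mapsto j-i$ is a bijection of $\Z$. I expect the only delicate point to be the interpolation argument: one must enumerate $\Gamma-i$ so that $z^{(n)}\to\s^i y$ in the product topology and invoke continuity of $\bar f$ on $\A^\Z$ (this replaces the finite induction that would suffice when $\Gamma$ is finite, which is the case actually used later). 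Everything else is bookkeeping with the shift and with nonnegative series.
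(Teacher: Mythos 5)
Your proof is correct. The paper does not prove this lemma at all---it is quoted verbatim from \cite[Lemma 2.4]{PS1}---and your argument (reduce to $\bar f$ on the full shift, flip the disagreement coordinates of $\s^i x$ one at a time in order of increasing absolute value, telescope against $\delta_{m}(\bar f)$, use continuity of $\bar f$ on the compact space $\A^\Z$ to pass to the limit, then sum over $i\in\Lambda$ and reindex by the bijection $i\mapsto j-i$ for the second inequality) is precisely the standard argument behind the cited result, with the only genuinely delicate point (convergence $z^{(n)}\to\s^i y$ in the product topology for infinite $\Gamma$) handled correctly.
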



\begin{lem}[{\cite[Lemma 2.5]{PS1}}]\label{PS2.5}
Let $f\in\mathcal{B}(X)$. 
For $\e>0$, there exists $N_\e\in\N$ such that for $m\ge N_\e$, 

\begin{align}\label{7}
\sup\Bigg\{\sum_{1\le i\le m}|f&(\s^i x)-f(\s^i y)|: 
\begin{array}{ll}
x,y\in X, x_k=y_k\\
\text{for all} \,\, k\in\{1,\cdots, m\}
\end{array} 
\Bigg\}\le m\e
\end{align}
and
\begin{align}\label{8}
\sup\Bigg\{\sum_{j\notin\{1,\cdots, m\}}|f&(\s^j x)-f(\s^j y)|: 
\begin{array}{ll}
x,y\in X, x_k=y_k\\
\text{for all} \,\, k\notin\{1,\cdots, m\}
\end{array} 
\Bigg\}\le m\e.
\end{align}

\end{lem}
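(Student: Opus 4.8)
The plan is to deduce both \eqref{7} and \eqref{8} from Lemma~\ref{PS2.4}, together with the elementary fact that a nonnegative sequence tending to $0$ has Cesàro averages tending to $0$; the hypothesis $f\in\B(X)$ will enter only through the summability it gives for the oscillation coefficients. Concretely, I would first fix a continuous extension $\bar f\in C(\A^\Z)$ with $\bar f\approx f$ and $S\coloneq\|\bar f\|_\delta=\sum_{l\in\Z}\delta_l(\bar f)<\infty$, which exists since $f\in\B(X)$, and for $r\in\N$ set
$$\tau^+(r)\coloneq\sum_{l\ge r}\delta_l(\bar f),\qquad\tau^-(r)\coloneq\sum_{l\le-r}\delta_l(\bar f).$$
Each of $\tau^+,\tau^-$ is nonincreasing, bounded above by $S$, and tends to $0$ as $r\to\infty$ since the series defining $S$ converges; hence $\frac1m\sum_{r=1}^m\tau^+(r)\to0$ and $\frac1m\sum_{r=1}^m\tau^-(r)\to0$, and one may fix $N_\e\in\N$ such that $\sum_{r=1}^m\tau^+(r)\le m\e/2$ and $\sum_{r=1}^m\tau^-(r)\le m\e/2$ for all $m\ge N_\e$.

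For \eqref{7}, take $x,y\in X$ with $x_k=y_k$ for all $k\in\{1,\dots,m\}$, so that $\Gamma\coloneq\{j\in\Z:x_j\neq y_j\}\subset\Z\setminus\{1,\dots,m\}$. Applying Lemma~\ref{PS2.4} with $\Lambda=\{1,\dots,m\}$, and then enlarging $\Gamma$ to all of $\Z\setminus\{1,\dots,m\}$, gives
$$\sum_{i=1}^m|f(\s^i x)-f(\s^i y)|\le\sum_{i=1}^m\Bigl(\sum_{j\le0}\delta_{j-i}(\bar f)+\sum_{j\ge m+1}\delta_{j-i}(\bar f)\Bigr).$$
The substitution $l=j-i$ identifies the two inner sums with $\tau^-(i)$ and $\tau^+(m+1-i)$ respectively; summing over $i\in\{1,\dots,m\}$ and reindexing $r=m+1-i$ in the second piece, the right-hand side equals $\sum_{r=1}^m\tau^-(r)+\sum_{r=1}^m\tau^+(r)$, which is $\le m\e$ whenever $m\ge N_\e$. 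Taking the supremum over admissible $x,y$ yields \eqref{7}.

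For \eqref{8} I would run the symmetric argument with the roles of $\Gamma$ and $\Lambda$ interchanged: now $x,y$ agree off $\{1,\dots,m\}$, so $\Gamma\subset\{1,\dots,m\}$, and Lemma~\ref{PS2.4} with $\Lambda=\Z\setminus\{1,\dots,m\}$ bounds the left side of \eqref{8} by $\sum_{j=1}^m\bigl(\sum_{i\le0}\delta_{j-i}(\bar f)+\sum_{i\ge m+1}\delta_{j-i}(\bar f)\bigr)$; the substitution $l=j-i$ now turns the inner sums into $\tau^+(j)$ and $\tau^-(m+1-j)$, and after the same reindexing the bound is again $\sum_{r=1}^m\tau^+(r)+\sum_{r=1}^m\tau^-(r)\le m\e$. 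There is no genuinely hard step: once Lemma~\ref{PS2.4} is available, everything reduces to bookkeeping with the shift $l=j-i$ and the Cesàro observation. The one point requiring attention is that $N_\e$ depends on the rate at which $\tau^+(r),\tau^-(r)\to0$ — precisely where summability of $(\delta_l(\bar f))$, i.e.\ $f\in\B(X)$, is used — while the estimates are automatically uniform in $x,y$, since after $\Gamma$ is enlarged to a fixed set the bound from Lemma~\ref{PS2.4} no longer involves $x$ or $y$.
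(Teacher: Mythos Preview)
Your proof is correct. The paper does not give its own proof of this lemma --- it is simply quoted from \cite{PS1} --- so there is nothing to compare against; your argument via Lemma~\ref{PS2.4}, the tail sums $\tau^\pm(r)$, and the Ces\`aro averaging is exactly the standard route and all the bookkeeping (the substitutions $l=j-i$, the reindexing $r=m+1-i$, and the swap of $\Gamma$ and $\Lambda$ between \eqref{7} and \eqref{8}) checks out.
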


\subsection{\textcolor{black}{Pressure}}
For $k<l$ we set $[k, l]=\{k, k+1,\cdots, l-1, l\}$. 
Define the projection $J_{[k, l]}:\Sab\to\Lab$ by
$$\Sab\ni x\mapsto J_{[k, l]}(x)\coloneq x_{[k, l]}=x_k x_{k+1}\cdots x_l.$$

For $n\in\N$ we choose a finite set $E^n\subset\Sab$ satisfying that 
\begin{enumerate}
\item[(i)] $J_{[-n, n]}(E^n)=\Lab_{2n+1};$
\item[(ii)] for $x, x'\in E^n$, if $J_{[-n, n]}(x)=J_{[-n, n]}(x')$, then $x=x'$.
\end{enumerate}
For $\varphi\in C(\Sab)$ we set
$$P_{E^n}(\varphi)\coloneq\frac{1}{2n+1}\ln\Xi^n(\varphi)$$
where
$$\Xi^n(\varphi)\coloneq\sum_{x\in E^n}\exp\bigg(\sum_{j\in[-n, n]}\varphi(\s^j x)\bigg).$$
We define the pressure $p(\varphi)$ as
\begin{equation}
p(\varphi)\coloneq\lim_{n\to\infty}P_{E^n}(\varphi).\label{11}
\end{equation}
\color{black}
The map $\Lab\ni w\mapsto g(w)\in\{0, 1\}$ is defined by 
\begin{align*}
g(w)\coloneq
   \begin{cases}
   0 & \quad\text{if}\,\,k_1(w)=0, k_2(w)>0, 
   b_{k_2(w)+1}\textcolor{black}{\in\{0,1,\cdots,\k\}},\\
   \k & \quad\text{otherwise}.
   \end{cases}
\end{align*} 
\textcolor{black}{
We remark that if $vu\in\Lab$ and $s(v)=\e$, then $g(vu)=g(u)$.
}
\begin{lem}\label{g(w)}
Let $w=x_kx_{k+1}\cdots x_{k+m-1}\in\Lab_m$. If we define $y=(\cdots, y_{-1}, y_0, y_1,\cdots)$ by
\begin{align*}
y_j=
   \begin{cases}
   x_j\quad & j=k,\cdots, k+m-1,\\
   g(w)\quad & j=k+m,\\
   \k\quad & \text{otherwise},
   \end{cases}
\end{align*}
then we have $y\in\Sab$.
\end{lem}

\begin{proof}
Let $w=x_k x_{k+1}\cdots x_{k+m-1}\in\Lab_m$. 
Then $w$ can be extended to the left by $\k$, that is, $\textcolor{black}{\k\k\cdots\k} w\in\Lab$.

Also $w$ can be extended to the right by $\textcolor{black}{\k\k\k\cdots}$ or $\textcolor{black}{0\k\k\k\cdots}$.
If $s(w)=\e$, that is, $\vtx(w)=[0, 0]$, this is clear. 
If $w=vu, u=s(w)\in\Pa\backslash\{\e\}$, then $u=a_1\cdots a_p=0\textcolor{black}{\k\cdots\k}$ for some $p\ge 1$. Since $a_i=\k$ for any $i>1$, we get $w\textcolor{black}{\k\k\cdots}\in\Xab$. 

If $w=vu, u=s(w)\in\Pb\backslash\{\e\}$, then $u=b_1\cdots b_p$ for some $p\ge 1$. 
When $k_1(w)=0$ and $b_{p+1}=\textcolor{black}{1, \cdots,\k}$, we may change $b_{p+1}$ into $g(w)=0$ and $\vtx(w0)=[1, 0]$ (see \Eb). 
Hence $w\textcolor{black}{0\k\k\cdots}\in\Xab$. 
When $k_1(w)=0$ and $b_{p+1}\ge \textcolor{black}{\k+1}$, we may change $b_{p+1}$ into $g(w)=\k$ and $\vtx(w1)=[0, 0]$ (see \Ed). Therefore $w\textcolor{black}{\k\k\cdots}\in\Xab$.
 
 When there exist 
 \begin{align*}
 r\ge
 \begin{cases}
 0\quad\text{if}\quad k_1(w)>0,\\
 1\quad\text{if}\quad k_1(w)=0\,\,\text{and}\,\,b_{p+1}=0
  \end{cases}
 \end{align*} 
 such that $b_{p+1}\cdots b_{p+r}=a_{k_1(w)+1}\cdots a_{k_1(w)+r}$ and $b_{p+r+1}\neq a_{k_1(w)+r+1}=1$, we may change $b_{p+r+1}$ into $\k$ and $\vtx(wb_{p+1}\cdots b_{p+r}\k)=[k_1(w)+r+1, 0]$ (see \Eb). \textcolor{black}{Hence}
\begin{align*}
 wg(w)\k\k\cdots&=
 \begin{cases}
 w\k\k\cdots\quad\text{if}\quad k_1(w)>0,\\
 w0\k\k\cdots\quad\text{if}\quad k_1(w)=0\,\,\text{and}\,\,b_{p+1}=0
  \end{cases}\\
  &\in\Xab.
 \end{align*} 
This proves that $y\in\Sab$.
\end{proof}
\color{black}
It is known that $p(\varphi)$ is independent of the choice of  $E^n$ (see \cite[Theorem9.6(i)]{Wa}). From now on we choose
\begin{align}\label{En}
E^n=\left\{x\in\A^\Z\left|
\begin{array}{l}
x_{[-n, n]}\in\Lab,\\
x_{n+1}=g(x_{[-n, n]}),\\
x_j=\k\quad\text{if}\,\,\,j\not\in[-n, n+1]
\end{array}\right.\right\}.
\end{align}
By Lemma \ref{g(w)} we have $E^n\subset\Sab$.
\color{black}

\color{black}

\color{black}

\color{black}

\begin{lem}\label{PS2.9a}
Let $k,l\in\Z$, $m=l-k+1$ and $w\in\Lab_m$. Define $w^\sharp\in\Sab$ by
\begin{eqnarray}
w_j^\sharp
\coloneq
  \begin{cases}
     w_{j-k+1} & \quad\text{if}\,\,\,k\le j\le l \\
     g(w) & \quad\text{if}\,\,\,j=l+1 \\
     \k & \quad\text{otherwise}.
  \end{cases}\label{16}
\end{eqnarray}
Then we have
\begin{enumerate}
   \item The configurations $w^\sharp$ and $\widetilde{w}^\sharp$ differ at most at four coordinates.
   \item The map $\Lab_m\ni w\mapsto\widetilde{w}^\sharp$ is at most $(\M)$-to-one.
\end{enumerate}
\end{lem}

\begin{proof}
(i) The configurations $w$ and $\widetilde{w}$ differ at most three coordinates by Remark~\ref{w^coor}. \textcolor{black}{Consequently,} the configurations $w^\sharp$ and $\widetilde{w}^\sharp$ differ at most at four coordinates including the coordinate of $g(w)$.

(ii) It is clear that the map $\Lab_m\ni w\mapsto w^\sharp$ is one-to-one. Therefore the map $\Lab_m\ni w\mapsto\widetilde{w}\mapsto\widetilde{w}^\sharp$ is at most $(\M)$-to-one by Lemma~\ref{PS2.2}.
\end{proof}

By using Lemma~\ref{PS2.9a} we can prove the following lemma which is similar to \cite[Lemma 2.9]{PS1}.

\begin{lem}(cf. \cite[Lemma 2.9]{PS1})\label{PS2.9}
Let $k,l\in\Z$, $m=l-k+1$ and $w\in\Lab_m$. Let $w^\sharp\in\Sab$ be defined as in \eqref{16}.
Then for $\varphi$ with $\|\varphi\|_\delta<\infty$ the pressure $p(\varphi)$ is equal to
$$\lim_{m\to\infty}\frac{1}{m}\ln\quad\smashoperator{\sum_{w\in\Lab_m:s(w)=\e}}\quad\exp\sum_{j=k}^l\varphi(\s^j w^\sharp).$$
\end{lem}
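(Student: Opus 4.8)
The plan is to show that the quantity
$$Q_m \coloneq \frac{1}{m}\ln\ \smashoperator{\sum_{w\in\Lab_m:s(w)=\e}}\ \exp\sum_{j=k}^l\varphi(\s^j w^\sharp)$$
has the same limit as $P_{E^n}(\varphi)$ when $n\to\infty$ with $m=2n+1$, $k=-n$, $l=n$. The point is that $Q_m$ is (essentially) a restricted version of $P_{E^n}(\varphi)$ in which one only sums over words $w\in\Lab_m$ with $s(w)=\e$, and with a fixed tail $x^+$ attached on the right instead of the $g(w)$-padding. So the proof splits into two comparisons: first, replacing the full sum $\Xi^n(\varphi)=\sum_{v\in\Lab_m}\Xi^n_{[-n,n]}(v)$ by the sub-sum over $v$ with $s(v)=\e$, using Lemmas~\ref{PS2.7} and \ref{PS2.8} to control the error; second, comparing the contribution of each such $v$ in $E^n$ (where the coordinates outside $[-n,n+1]$ are $1$ and $x_{n+1}=g(v)$, but there is still the left part $x_{-n}^-$ ranging over all admissible left tails) with the single term $\exp\sum_{j=k}^l\varphi(\s^j v^\sharp)$.

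First I would handle the right-hand comparison. For $v\in\Lab_m$ with $s(v)=\e$, by Lemma~\ref{PS2.7} we have $\Xi^n_{[-n,n]}(v)\le(\M)e^{7\|\varphi\|_\delta}\Xi^{\ast,n}_{[-n,n]}(\widetilde v)=(\M)e^{7\|\varphi\|_\delta}\Xi^{\ast,n}_{[-n,n]}(v)$ since $\widetilde v=v$ when $s(v)=\e$ (Remark~\ref{w^coor}), and trivially $\Xi^{\ast,n}_{[-n,n]}(v)\le\Xi^n_{[-n,n]}(v)$; so up to a uniform multiplicative constant we may work with $\Xi^{\ast,n}_{[-n,n]}(v)$, i.e. restrict to $x$ with $s(x^-_{-n})=\e$. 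For such $x$, the full configuration is $x=x^-_{-n}\,v\,x^+_n$ with $\vtx(x^-_{-n})=[0,0]$ and $\vtx(v)=[0,0]$; since any word ending at $[0,0]$ can be followed by $1^\infty$ (Lemma~\ref{g(w)}), the particular element $v^\sharp$ (left tail $1^\infty$, then $v$, then $g(v)=1\cdot\dots$) lies in $E^{\ast,n}_{[-n,n]}(v)$, and every other element of $E^{\ast,n}_{[-n,n]}(v)$ differs from $v^\sharp$ only in the left tail $x^-_{-n}$ and in coordinate $n+1$. Because $x^-_{-n}$ is a left-infinite admissible sequence with $\vtx=[0,0]$, its entries far to the left are forced to be $1$ and the number of distinct "relevant" left tails is not bounded — BUT the sum over them is exactly $\Xi^{\ast,n}_{[-n,n]}(v)$, and one does \emph{not} want to bound it by a single term. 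Instead I reorganize: pulling out the ambient factor $\exp\sum_{j\in[-n,n]}\varphi(\s^j v^\sharp)$ and using Lemma~\ref{PS2.4} to bound the oscillation of $\varphi$ along the orbit caused by changing the left tail, one gets
$$\exp\Big(\sum_{j\in[-n,n]}\varphi(\s^j v^\sharp)-o(n)\Big)\ \le\ \frac{\Xi^{\ast,n}_{[-n,n]}(v)}{\#\{\text{left tails}\}^{\pm}}\ \le\ \exp\Big(\sum_{j\in[-n,n]}\varphi(\s^j v^\sharp)+o(n)\Big),$$
where the combinatorial factor counting left tails is at most $\lambda^{n}$-ish but is \emph{independent of} $v$ and, crucially, cancels in the $\frac1m\ln$ limit when one does the same book-keeping for both $Q_m$ and $P_{E^n}(\varphi)$. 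The honest way to say this: the number of $x\in E^{\ast,n}_{[-n,n]}(v)$ is the same for every $v$ with $s(v)=\e$ (it equals the number of left-infinite admissible tails $x^-_{-n}$ with $\vtx=[0,0]$, truncated to $[-n,\dots]$, which only depends on $n$), call it $C_n$; then $\Xi^{\ast,n}_{[-n,n]}(v)=\sum_{x}\exp\sum_j\varphi(\s^j x)$ and each summand differs from $\exp\sum_j\varphi(\s^j v^\sharp)$ by a factor between $e^{-\rho_n}$ and $e^{\rho_n}$ where $\rho_n=o(n)$ by Lemma~\ref{PS2.5}\eqref{8} (changing only coordinates $\le -n$ and coordinate $n+1$). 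Hence $C_n e^{-\rho_n}\le \Xi^{\ast,n}_{[-n,n]}(v)/\exp\sum_j\varphi(\s^j v^\sharp)\le C_n e^{\rho_n}$.

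Then I assemble: using the partition $\Xi^n(\varphi)=\sum_{v\in\Lab_m}\Xi^n_{[-n,n]}(v)$, Lemma~\ref{PS2.7} gives for each $v$ that $\Xi^n_{[-n,n]}(v)\le (\M)e^{7\|\varphi\|_\delta}\Xi^{\ast,n}_{[-n,n]}(\widetilde v)$, and Lemma~\ref{PS2.8} gives $\Xi^{\ast,n}_{[-n,n]}(\widetilde v)\le |\A|^{\Zab(v)+3}e^{(\Zab(v)+6)\|\varphi\|_\delta}\Xi^{\ast,n}_{[-n,n]}(v)$ — wait, that inequality goes the wrong way, so I use it to go from $\widetilde v$-sums back to $v$-sums only when needed, and in the other direction use $\Xi^{\ast,n}_{[-n,n]}(v)\le\Xi^n_{[-n,n]}(v)$. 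Grouping the terms $v$ according to their image $\widetilde v$ (which has $s(\widetilde v)=\e$), Lemma~\ref{PS2.2}(v) says at most $(\M)$ values of $v$ map to a given $\widetilde v$, so
$$\Xi^{\ast,n}_{[-n,n]}(w)\ \le\ \sum_{v:\,s(v)=\e}\Xi^{\ast,n}_{[-n,n]}(v)\ \le\ \Xi^n(\varphi)\ \le\ (\M)^2 e^{7\|\varphi\|_\delta}\!\!\!\sum_{v:\,s(v)=\e}\!\!\!\Xi^{\ast,n}_{[-n,n]}(v),$$
for any fixed $w$ with $s(w)=\e$; combined with the previous paragraph's estimate $\Xi^{\ast,n}_{[-n,n]}(v)=C_n e^{\pm\rho_n}\exp\sum_j\varphi(\s^j v^\sharp)$ we get
$$C_n e^{-\rho_n}\!\!\!\sum_{v:\,s(v)=\e}\!\!\!\exp\sum_{j}\varphi(\s^j v^\sharp)\ \le\ \Xi^n(\varphi)\ \le\ (\M)^2 e^{7\|\varphi\|_\delta} C_n e^{\rho_n}\!\!\!\sum_{v:\,s(v)=\e}\!\!\!\exp\sum_{j}\varphi(\s^j v^\sharp).$$
Taking $\frac{1}{2n+1}\ln$, the factor $(\M)^2 e^{7\|\varphi\|_\delta}$ contributes $0$ in the limit, the $e^{\pm\rho_n}$ contribute $0$ since $\rho_n=o(n)$, and the key point is that $\frac{1}{2n+1}\ln C_n\to 0$: this holds because $C_n$ is at most the number of words of length $\le n$ ending at a fixed vertex, hence grows at most exponentially but in fact sub-exponentially (or, if one prefers, one can avoid this by noting $C_n$ is the \emph{same} on both sides and could be absorbed — but since $P_{E^n}(\varphi)$ does not carry a $C_n$, one genuinely needs $\frac1n\ln C_n\to0$, which follows from the fact that any admissible left-infinite tail with $\vtx=[0,0]$ is eventually all $1$'s — more precisely, $x^-_{-n}$ restricted to a window of size $n$ can still be essentially arbitrary, so $C_n$ is exponential; this forces me instead to \emph{incorporate the left tail into $w^\sharp$ differently}).

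The honest main obstacle, and the place the argument really has to be done carefully, is precisely this left-tail issue: in $Q_m$ the term $w^\sharp$ has the \emph{fixed} left tail $1^\infty$, whereas in $\Xi^n(\varphi)$ the left tail ranges over all admissible continuations — so $Q_m$ is genuinely a restriction, not merely a reorganization, and one must show this restriction does not change the exponential growth rate. The resolution, following \cite[Lemma 2.9]{PS1}: given any $x\in E^{\ast,n}_{[-n,n]}(v)$ with left tail $x^-_{-n}$ (which has $\vtx=[0,0]$), one can also view $x^-_{-n}v$ as \emph{itself} a long admissible word $w'$ of length $\approx 2n + (\text{length of relevant part of }x^-_{-n})$ with $s(w')=\e$, and ${w'}^\sharp$ differs from $x$ only in coordinates $\le$ (some left boundary) and at coordinate $n+1$; so the map $x\mapsto w'$ realizes each term of $\Xi^{\ast,n}(\varphi)$ as (close to) a term of a $Q_{m'}$-type sum for a slightly larger $m'$, and conversely each term of $Q_m$ arises this way. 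Making this a clean two-sided estimate between $\frac{1}{2n+1}\ln\Xi^n(\varphi)$ and $Q_{2n+1}$ up to $o(1)$ errors — using Lemma~\ref{PS2.5} to absorb the orbit-oscillation from the finitely-many changed coordinates near the boundaries, Lemma~\ref{PS2.2} for the bounded multiplicity of $w\mapsto\widetilde w$, and Lemmas~\ref{PS2.7}--\ref{PS2.8} together with $\lim_n\frac{\Zab(v)}{n}$ being controlled (note here we do \emph{not} need the $\bar z$ hypothesis of Theorem~\ref{main}; rather we sum over \emph{all} $v\in\Lab_m$, and the $\Zab$-factors from Lemma~\ref{PS2.8} only enter when converting $\widetilde v$-sums back, which we arrange to avoid by grouping in the favourable direction) — is the technical heart, and it is exactly the computation carried out in \cite[Lemma 2.9]{PS1}, which transfers here verbatim once Lemmas~\ref{PS2.7a}, \ref{PS2.8a}, \ref{PS2.9a} are in hand.
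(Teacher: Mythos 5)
The central obstacle you identify — that in $\Xi^n(\varphi)$ ``the left tail ranges over all admissible continuations'' while $w^\sharp$ has the fixed tail $1^\infty$ — does not exist, and this misreading derails the whole proposal. The paper fixes a \emph{specific} choice of $E^n$: every $x\in E^n$ has $x_j=1$ for all $j\notin[-n,n+1]$ and $x_{n+1}=g(x_{[-n,n]})$, and $p(\varphi)$ is independent of this choice by \cite[Theorem 9.6(i)]{Wa}. Hence for the full window $[k,l]=[-n,n]$ there is exactly \emph{one} configuration of $E^n$ per word $w\in\Lab_{2n+1}$, namely a shift of $w^\sharp$, so
$\Xi^n(\varphi)=\sum_{w\in\Lab_{2n+1}}\exp\sum_{j=-n}^{n}\varphi(\s^j w^\sharp)$
holds as an identity, with no counting factor $C_n$, no restriction to $E^{\ast,n}$, and no use of Lemmas~\ref{PS2.7} or \ref{PS2.8} (whose $\Zab$-dependent constants you correctly sense you must avoid, but which simply never enter). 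You conflated the situation of Lemmas~\ref{PS3.1}--\ref{PS3.2}, where $[k,l]\subsetneq[-n,n]$ and the coordinates of $[-n,n]\setminus[k,l]$ genuinely vary, with the present lemma where the window is all of $[-n,n]$.

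Because of this, the only real content of the lemma — the comparison between the sum over all $w\in\Lab_m$ and the sum over $w$ with $s(w)=\e$ — is never actually carried out in your write-up; you defer ``the technical heart'' to \cite[Lemma 2.9]{PS1} instead of proving it. The paper's argument is short: by Lemma~\ref{PS2.9a}, $w^\sharp$ and $\widetilde{w}^\sharp$ differ at most at four coordinates, so by Lemma~\ref{PS2.4} the Birkhoff sums over $[k,l]$ differ by at most $4\|\varphi\|_\delta$, and the map $w\mapsto\widetilde{w}$ is at most $(\M)$-to-one by Lemma~\ref{PS2.2}; since $s(\widetilde{w})=\e$, this sandwiches $\sum_{w\in\Lab_m}\exp\sum_{j=k}^l\varphi(\s^jw^\sharp)$ between $e^{-4\|\varphi\|_\delta}$ and $e^{4\|\varphi\|_\delta}(\M)$ times the restricted sum, and these constants vanish after taking $\frac1m\ln$ and $m\to\infty$ (plus the routine remark that the limit along $m=2n+1$ controls all $m$). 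Your proposal, as written, neither exploits the exact identification with $E^n$ nor completes this sandwich, so it has a genuine gap.
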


In other words the following statement holds: 
For any $\e>0$ and $\varphi\in\B(\Sab)$,  
there exist $N_{\e, \varphi}$ such that for $m\ge N_{\e, \varphi}$,
\begin{align}\label{23}
e^{-m\e}\le\frac{e^{mp(\varphi)}}{\sum_{\substack{w\in\Lab_m:\\s(w)=\e}}\exp\sum_{j=k}^l\varphi(\s^jw^\sharp)}\le e^{m\e}.
\end{align}

\color{black}
\begin{proof}[\bf{Sketch of the proof}]
In Lemma~\ref{PS2.9a}, we modify the definition of $w^\sharp$ from \cite[Lemma 2.9]{PS1} to obtain $w^\sharp\in E^n$ defined by \eqref{En}. 
Taking into account the replacement of $\widehat{w}^\sharp$ by $\widetilde{w}^\sharp$, we obtain the following lower and upper bounds for the ratio:
\begin{align*}
e^{-4\|\varphi\|_\delta}\le\frac{\sum_{w\in\Lab_m}\exp\sum_{j=k}^l\varphi(\s^jw^\sharp)}{\sum_{w\in\Lab_m:s(w)=\e}\exp\sum_{j=k}^l\varphi(\s^jw^\sharp)}\le e^{4\|\varphi\|_\delta}(\M)
\end{align*}
by using Lemma~\ref{PS2.9a}. The desiblack result follows from taking $\frac{1}{m}\ln$ and the limit $m\to\infty$.
\end{proof}

\color{black}

\subsection{Upper and lower bounds.}\label{3.1}
Let $\varphi\in\B(\Sab)$. 
In \textcolor{black}{this section}, we establish upper and lower bounds for $\nu([y_0\cdots y_{m-1}])$ for any equilibrium measure $\nu$ of $\varphi$. Theorem~\ref{main} is proved in \S\ref{3.2}.

\begin{defi}
An invariant probability measure $\nu$ is a \textit{tangent functional to the pressure $p$ at $\varphi$} if
$$p(\varphi+f)\ge p(\varphi)+\int f\,\,d\nu\quad\text{for all continuous}\, f.$$
We denote as $\partial p(\varphi)$ the set of tangent functionals to the pressure at $\varphi$.
\end{defi}
Assume that \textcolor{black}{$\b>2$ and $\k\in\N_0$ with $\kappa<\lceil\k/\b+\b\rceil-2$, $\a=\k/\b$}. 
We first consider the case of a unique tangent functional $\nu$ to the pressure at $\varphi$ and then extend this result to any $\varphi\in\B(\Sab)$.
(see \cite{PS1} and the references given there).
Let $\bar{u}\in\Lab_m$ be fixed and set
\begin{align*}
I_{\bar{u}}(y)\coloneq
   \begin{cases}
   1 & \,\,\text{if}\quad y_0\cdots y_{m-1}=\bar{u},\\
   0 & \,\,\text{otherwise}.
   \end{cases}
\end{align*}
Then it follows that
\begin{align*}
\nu(I_{\bar{u}})
& =\frac{d}{dt}\lim_{n\to\infty}P_{E^n}(\varphi+tI_{\bar{u}})\bigg|_{t=0}=\lim_{n\to\infty}\frac{d}{dt}P_{E^n}(\varphi+tI_{\bar{u}})\bigg|_{t=0}\\
& =\lim_{n\to\infty}\frac{1}{2n+1}\sum_{j=-n}^n\frac{\sum\nolimits_{x\in E^n}I_{\bar{u}}(\s^jx)\exp\sum\nolimits_{i=-n}^n\varphi(\s^ix)}{\sum\nolimits_{x\in E^n}\exp\sum\nolimits_{i=-n}^n\varphi(\s^ix)}\\
& =\lim_{n\to\infty}\frac{1}{2n+1}\sum_{[k, l]\subset[-n, n]}\frac{\Xi_{[k, l]}^n(\bar{u})}{\sum\nolimits_{v\in\Lab_m}\Xi_{[k, l]}^n(v)}
\stepcounter{equation}\tag{\theequation}\label{17}
\end{align*}
(see \cite{PS1} and the references given there).
\color{black}
Let $[k, l]\subset[-n, n], x\in E^n$. Set
$$x_k^-\coloneq x_{(-\infty, k-1]}\quad\text{and}\quad x_l^+\coloneq x_{[l+1, \infty)}.$$
Because of our choice of $E^n$, the definition \eqref{decom} and \eqref{hat} can be extended to $x_k^-$ since $x_j=\k$ for $j<-n$. 
Let $s(x_k^-)\in\P$ be a largest suffix of $x_k^-$, 
we set
$$\widehat{x_k^-}\coloneq y\widehat{s(x_k^-)}\quad\text{where}\quad x_k^-=ys(x_k^-)$$
and
$$\widetilde{x_k^-}\coloneq\widehat{(\widehat{x_k^-})}\coloneq y'\widehat{s(\widehat{x_k^-})}
\quad\text{where}\quad\widehat{x_k^-}=y's(\widehat{x_k^-}).$$
Let $v\in\Lab_{l-k+1}$. We define
$$E_{[k, l]}^n(v)\coloneq\{x\in E^n:x_{[k, l]}=v\},$$
$$E_{[k, l]}^{\ast, n}(v)\coloneq\{x\in E_{[k, l]}^n(v):s(x_k^-)=\e\}$$
and 
$$\Xi_{[k, l]}^n(v)\coloneq\sum_{x\in E_{[k, l]}^n(v)}\exp\bigg(\sum_{j\in[-n, n]}\varphi(\s^jx)\bigg),$$
$$\Xi_{[k, l]}^{\ast, n}(v)\coloneq\sum_{x\in E_{[k, l]}^{\ast, n}(v)}\exp\bigg(\sum_{j\in[-n, n]}\varphi(\s^jx)\bigg).$$
We have
$$\Xi^n(\varphi)=\sum_{v\in\Lab_{l-k+1}}\Xi_{[k, l]}^n(v).$$

\color{black}
We now consider the upper and lower bounds for $\Xi_{[k, l]}^{n}(v)$. 
In the case of $\b$-shifts, to obtain bounds for $\Xi_{[k, l]}^{n}(v)$ in terms of $E_{[k, l]}^{\ast, n}(\widehat{v})$, Pfister and Sullivan introduced two bounded-to-one maps $\widehat{f}:E_{[k, l]}^n(v)\to E_{[k, l]}^{\ast, n}(\widehat{v})$ and $f:E_{[k, l]}^{\ast, n}(\widehat{v})\to E_{[k, l]}^{\ast, n}(v)$  (see \cite[Lemma 2.7, 2.8]{PS1}). 
Since the function $\widetilde{w}$ as in \eqref{hat} takes the role of $\widehat{w}$ in \cite{PS1}, we define new corresponding maps $\widetilde{f}$ and $f$ by replacing $\widehat{v}$ with $\widetilde{v}$ in the definitions of $\widehat{f}$ and $f$. 
However, the images of such maps do not necessarily belong to $E^n$. 
Thus we should also change the $(n+1)$-th coordinate of $x$ by using the map $g$ (Lemma~\ref{g(w)}). 
In Lemmas~\ref{PS2.7a} and \ref{PS2.8a} we give precise definitions of $\widetilde{f}$ and $f$ and describe their properties.
\color{black}

\color{black}
\begin{lem}\label{PS2.7a}
Let $[k, l]\subset[-n, n]$ and $v\in\Lab_{l-k+1}$.
Define a map $\widetilde{f}$ as follows: 
\begin{align*}
\widetilde{f}:E_{[k, l]}^n(v)\to E_{[k, l]}^{\ast, n}(\widetilde{v}),\quad x=x_k^-vx_l^+\mapsto \widetilde{f}(x)\coloneq \widetilde{x_k^-}\widetilde{v}{x'}_l^+
\end{align*}
where ${x'}_l^+=({x'}_{l+1}, {x'}_{l+2}, \cdots)$ defined by 
\begin{align*}
{x'}_j\coloneq
\begin{cases}
     x_j & \quad j\neq n+1,\\
     g(x_{l+1}\cdots x_n) & \quad j=n+1.
\end{cases}
\end{align*}
Then the following statements hold:
\begin{enumerate}
    \item The map $\widetilde{f}$ is at most $(\M)$-to-one.
    \item The sequences $x$ and $\widetilde{f}(x)$ differ at most at seven coordinates.
\end{enumerate}
\end{lem}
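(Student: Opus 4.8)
The plan is to verify directly that $\widetilde f$ lands in $E_{[k,l]}^{\ast,n}(\widetilde v)$, then bound its fibers and the number of changed coordinates. First I would check well-definedness: by Lemma~\ref{PS2.2}(i) both $\widetilde{x_k^-}$ and $\widetilde v$ correspond to paths in $\Gab$ ending at $[0,0]$, so $s(\widetilde{x_k^-})=\e$; concatenating $\widetilde{x_k^-}\widetilde v$ keeps us at a vertex, and since $x\in E^n$ we have $\vtx$ of the left-infinite part is $[0,0]$ before reading $x_l^+$. The tail $x_l^+$ is admissible as a continuation of $v$, hence also of $\widetilde v$, because $\widetilde v$ ends at $[0,0]$ which can be followed by any word in $F([0,0])=\Xab$. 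The correction ${x'}_{n+1}=g(x_{l+1}\cdots x_n)$ is forced by the requirement defining $E^n$ (the symbol at position $n+1$ must equal $g$ of the preceding block $x_{[-n,n]}$, and replacing $x_{[k,l]}=v$ by $\widetilde v$ changes that block); Lemma~\ref{g(w)} guarantees the resulting bi-infinite sequence lies in $\Sab$, and $s(\widetilde{x_k^-})=\e$ gives membership in the starred set. I would also note $x_{[k,l]}=v$ is mapped to $\widetilde v$ by construction, so the target block label is $\widetilde v$ as claimed.

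For (i), the fiber of $\widetilde f$ over a point is determined by the fiber of the map $x_k^-\mapsto\widetilde{x_k^-}$ together with the fiber of $v\mapsto\widetilde v$; but $v$ is fixed in the statement (we work with a fixed $v$), so $\widetilde v$ is fixed and contributes no multiplicity, and the tail coordinates outside $\{n+1\}$ are unchanged while ${x'}_{n+1}$ is a deterministic function of the unchanged tail. Hence the multiplicity equals that of $w\mapsto\widetilde w$ applied to $x_k^-$, which by Lemma~\ref{PS2.2}(v) is at most $2p_1+7=\M$. One small point to be careful about: Lemma~\ref{PS2.2}(v) is stated for words in $\Lab$, so I would first record that $x_k^-$ may be treated as such a word because $x_j=1$ for $j<-n$, exactly as the paper remarks when extending \eqref{decom} and \eqref{hat} to $x_k^-$.

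For (ii), I combine the coordinate counts: by Remark~\ref{w^coor} the sequences $x_k^-$ and $\widetilde{x_k^-}$ differ in at most three coordinates (the $w\in\La$ case), and again by Remark~\ref{w^coor} (or the analogous statement for blocks) $v$ and $\widetilde v$ differ in at most three coordinates, and finally $x_l^+$ and ${x'}_l^+$ differ in at most one coordinate, namely position $n+1$. Adding gives at most $3+3+1=7$. The main obstacle I anticipate is the well-definedness argument: one must check carefully that after simultaneously modifying the left tail $x_k^-\mapsto\widetilde{x_k^-}$ \emph{and} the block $v\mapsto\widetilde v$, the concatenation is still a legal word of $\Lab$ at every position (not merely that each piece individually is), and that the $g$-correction at position $n+1$ is consistent with the definition of $E^n$ for the new block $\widetilde v x_{l+1}\cdots x_n$; this uses that $\widetilde v$ and $\widetilde{x_k^-}$ both terminate at $[0,0]$, so the graph reading never gets stuck, together with Lemma~\ref{g(w)} applied to the modified right block. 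Everything else is bookkeeping via the already-established lemmas.
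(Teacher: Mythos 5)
Your proposal is correct and follows essentially the same route as the paper: well-definedness via $\vtx(\widetilde{x_k^-})=\vtx(\widetilde{v})=[0,0]$ together with Lemma~\ref{g(w)} for the modified right tail, the fiber bound from Lemma~\ref{PS2.2} applied to $x_k^-\mapsto\widetilde{x_k^-}$ with $v,\widetilde{v}$ fixed, and the count $3+3+1=7$ from Remark~\ref{w^coor}. The only point worth stating explicitly in (i) is that the overwritten coordinate causes no extra multiplicity because, for any preimage $x\in E^n$, the constraint $x_{n+1}=g(x_{[-n,n]})$ determines $x_{n+1}$ once $x_k^-$ (and the fixed $v$ and unchanged tail) are chosen — exactly the paper's remark that $x_{n+1}$ is uniquely determined by the choice of $x_k^-$.
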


\begin{proof}
Since 
$\vtx(x_k^-)=\vtx(\widetilde{v})=[0, 0]$
 (see Lemmas~\ref{00} and \ref{w^}), $\widetilde{f}(x)$ is well defined and $\widetilde{f}(x)\in E_{[k, l]}^{\ast, n}(\widetilde{v})$.

(i) By Lemma \ref{PS2.2} the map $x_k^-\mapsto\widetilde{x_k^-}$ is at most $(\M)$-to-one. Therefore the map $\widetilde{f}$ is at most $(\M)$-to-one because $v$  and $\widetilde{v}$ are fixed and $x_{n+1}$ is uniquely determined by the choice of $x_k^-$.

(ii) By Remark~\ref{w^coor}, $x_k^-$ and $v$ differ from $\widetilde{x_k^-}$ and $\widetilde{v}$ at most at three coordinates respectively.
Thus the sequences $x$ and $\widetilde{f}(x)$ differ at most at seven coordinates.
\end{proof}


\begin{lem}(cf. \cite[Lemma 2.7]{PS1})\label{PS2.7}
Let $[k, l]\subset[-n, n], v\in\Lab_{l-k+1}$ and $\varphi\in\B(\Sab)$. 
Then
$$\Xi_{[k, l]}^{\ast, n}(v)\le\Xi_{[k, l]}^n(v)\le(\M)e^{7\|\varphi\|_\delta}\Xi_{[k, l]}^{\ast,n}(\widetilde{v}).$$
\end{lem}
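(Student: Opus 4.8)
The plan is to establish the chain of inequalities in three moves: the left inequality is immediate from $E_{[k,l]}^{\ast,n}(v)\subset E_{[k,l]}^n(v)$ together with positivity of the exponential summands, so the real work is the right-hand inequality. First I would use the map $\widetilde f: E_{[k,l]}^n(v)\to E_{[k,l]}^{\ast,n}(\widetilde v)$ from Lemma~\ref{PS2.7a} as the comparison device. The idea is to push each $x\in E_{[k,l]}^n(v)$ forward to $\widetilde f(x)\in E_{[k,l]}^{\ast,n}(\widetilde v)$, compare the two Birkhoff-type sums $\sum_{j\in[-n,n]}\varphi(\s^j x)$ and $\sum_{j\in[-n,n]}\varphi(\s^j\widetilde f(x))$ via the bounded total oscillation bound, and then account for the failure of injectivity using the at-most-$(\M)$-to-one property.

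The second move is the pointwise comparison of the two sums. By Lemma~\ref{PS2.7a}(ii), $x$ and $\widetilde f(x)$ agree outside a set $\Gamma$ of at most seven coordinates, so applying Lemma~\ref{PS2.4} with $\Lambda=[-n,n]$, $f=\varphi$ and any extension $\bar\varphi\approx\varphi$ gives
$$\left|\sum_{j\in[-n,n]}\varphi(\s^j x)-\sum_{j\in[-n,n]}\varphi(\s^j\widetilde f(x))\right|\le\sum_{j\in[-n,n]}|\varphi(\s^j x)-\varphi(\s^j\widetilde f(x))|\le\operatorname{Card}(\Gamma)\|\bar\varphi\|_\delta\le 7\|\varphi\|_\delta,$$
after taking the infimum over extensions $\bar\varphi$. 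Hence $\exp(\sum_{j\in[-n,n]}\varphi(\s^j x))\le e^{7\|\varphi\|_\delta}\exp(\sum_{j\in[-n,n]}\varphi(\s^j\widetilde f(x)))$ for every such $x$.

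The third move is the summation/counting step. Summing the last displayed inequality over all $x\in E_{[k,l]}^n(v)$ and regrouping the right-hand side according to the value $\widetilde f(x)=:y$, each fiber $\widetilde f^{-1}(y)$ has cardinality at most $\M$ by Lemma~\ref{PS2.7a}(i), so
$$\Xi_{[k,l]}^n(v)=\sum_{x\in E_{[k,l]}^n(v)}\exp\Big(\sum_{j\in[-n,n]}\varphi(\s^j x)\Big)\le e^{7\|\varphi\|_\delta}\sum_{y\in\widetilde f(E_{[k,l]}^n(v))}\operatorname{Card}(\widetilde f^{-1}(y))\exp\Big(\sum_{j\in[-n,n]}\varphi(\s^j y)\Big)\le(\M)\,e^{7\|\varphi\|_\delta}\Xi_{[k,l]}^{\ast,n}(\widetilde v),$$
where the last step uses $\widetilde f(E_{[k,l]}^n(v))\subset E_{[k,l]}^{\ast,n}(\widetilde v)$ and the definition of $\Xi_{[k,l]}^{\ast,n}(\widetilde v)$. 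I expect the main obstacle to be purely bookkeeping rather than conceptual: one must make sure that the fiber-counting is correctly applied (every term on the right is nonnegative, so dropping $y\notin\widetilde f(E_{[k,l]}^n(v))$ only increases the bound), and that the "seven coordinates" estimate is legitimately imported — this is exactly why Lemma~\ref{PS2.7a} was stated with the modified tail ${x'}_l^+$, ensuring $\widetilde f(x)$ genuinely lies in $E_{[k,l]}^{\ast,n}(\widetilde v)$ and the coordinate change at $n+1$ is already included in the count of seven. With those two points handled, the three displays above combine to give the claimed inequality.
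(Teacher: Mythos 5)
Your proposal is correct and follows essentially the same route as the paper: the left inequality from the inclusion $E_{[k,l]}^{\ast,n}(v)\subset E_{[k,l]}^n(v)$, and the right inequality by pushing forward with the map $\widetilde f$ of Lemma~\ref{PS2.7a}, bounding the change of the Birkhoff sum by $7\|\varphi\|_\delta$ via Lemma~\ref{PS2.4} (seven differing coordinates, including the one at $n+1$), and then counting fibers with the at-most-$(\M)$-to-one property. Nothing is missing; the fiber-regrouping and the infimum over extensions $\bar\varphi$ are handled exactly as needed.
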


\color{black}
\begin{proof}[\bf{Sketch of the proof}]
The upper bound of the formula in \cite[Lemma 2.7]{PS1} is controlled by two factors: the maximum number of preimages under the map $\widehat{f}$, which is $z^\b(c_1)+2$, and the difference in the number of characters between $x$ and $\widehat{f}(x)$, which is at most $2$.
In order to replace $\widehat{f}$ with $\widetilde{f}$, we should substitute the term $z^\b(c_1)+2$ by $\M$ 
(Lemma~\ref{PS2.7a}(i)).
Moreover, we change the exponent of $e$ from $2\|\varphi\|_\delta$ to $7\|\varphi\|_\delta$ because the sequences $x$ and $\widetilde{f}(x)$ differ at most at seven coordinates (Lemma~\ref{PS2.7a}(ii)).
\end{proof}

\color{black}

\color{black}

\color{black}
\begin{lem}\label{PS2.8a}
Let $[k, l]\subset[-n, n]$ and $v\in\Lab_{l-k+1}$. By the definition of $\Zab(v)$ there exist a $\overline{v}=\overline{v}_1\cdots\overline{v}_{|\overline{v}|}\in\Lab$ with $|\overline{v}|=\Zab(v)+2$ and $\vtx(v\overline{v})=[0, 0]$.
For such $\overline{v}$ we define a map $f:E_{[k, l]}^{\ast, n}(\widetilde{v})\to E_{[k, l]}^{\ast, n}(v)$, 
$$f(x_k^-\widetilde{v}x_l^+)\coloneq x_k^-v{x'}_l^+,$$
where ${x'}_l^+$ is defined as
\begin{align*}
\Zab(v)&+2\le n-l\\
& \Longrightarrow {x'}_i\coloneq
\begin{cases}
     \overline{v}_{i-l} & \quad i=l+1,\cdots, l+\Zab(v)+2,\\
     x_i & \quad i>l+\Zab(v)+2 \,\,\text{and}\,\, i\neq n+1,\\
     g(v\bar{v}x_{j}\cdots x_n) & \quad i=n+1\,\,\text{where}\,\,j=l+\Zab(v)+3,
    \end{cases}\\
\Zab(v)&+2>n-l\\
& \Longrightarrow {x'}_i\coloneq
\begin{cases}
     \overline{v}_{i-l} & \quad i=l+1,\cdots, n,\\
     g(v\bar{v}_1\cdots\bar{v}_{n-l}) & \quad i=n+1,\\
     \k & \quad i>n+1.\\
    \end{cases}
\end{align*}
Then we have
\begin{enumerate}
    \item The sequences $x_k^-\widetilde{v}x_l^+$ and $x_k^-\widetilde{v}{x'}_l^+$ differ at most at $\Zab(v)+3$ coordinates.
    \item The sequences $x_k^-\widetilde{v}{x'}_l^+$ and $x_k^-v{x'}_l^+$ differ at most at three coordinates.
    \item The map $f$ is at most $|\A|^{\Zab(v)+3}$-to-one.
\end{enumerate}
\end{lem}

\begin{proof}
(i) Since the $i$-th coordinate of $x_k^-\widetilde{v}x_l^+$ is equal to that of $x_k^-\widetilde{v}{x'}_l^+$ for any $i$ except for $i=l+1,\cdots, l+\Zab(v)+2, n+1$, hence $x_k^-\widetilde{v}x_l^+$ and $x_k^-\widetilde{v}{x'}_l^+$ differ at most at $\Zab(v)+3$ coordinates.

(ii) This is clear from Remark~\ref{w^coor}.

(iii) Since $v$ and $\widetilde{v}$ are fixed we need only consider the configurations of $x_l^+$ and ${x'}_l^+$. So by (i) the map $f$ is at most $|\A|^{\Zab(v)+3}$-to-one.
\end{proof}

Lemma~\ref{PS2.8a} shows some lower estimate similar to \cite[Lemma 2.8]{PS1}.

\begin{lem}(cf. \cite[Lemma 2.8]{PS1})\label{PS2.8}
Let $[k, l]\subset[-n, n], v\in\Lab_{l-k+1}$ and $\varphi\in\B(\Sab)$. 
Then
$$\Xi_{[k, l]}^{\ast, n}(v)\ge |\A|^{-(\Zab(v)+3)}e^{-(\Zab(v)+6)\|\varphi\|_\delta}\Xi_{[k, l]}^{\ast,n}(\widetilde{v}).$$
\end{lem}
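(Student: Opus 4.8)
The plan is to imitate the argument for the upper bound (Lemma~\ref{PS2.7}), but run it in the opposite direction, using the map $f$ from Lemma~\ref{PS2.8a} rather than $\widetilde{f}$ from Lemma~\ref{PS2.7a}. First I would observe that $f:E_{[k,l]}^{\ast,n}(\widetilde v)\to E_{[k,l]}^{\ast,n}(v)$ is well defined: the word $v\bar v$ has terminal vertex $[0,0]$ by the choice of $\bar v$, and $x_k^-$ already has terminal vertex $[0,0]$ since $x\in E_{[k,l]}^{\ast,n}(\widetilde v)$, so the concatenation $x_k^- v \bar v \cdots$ is a legal path in $\Gab$ and the completion by $1$'s (with the correct character $g(\cdot)$ at position $n+1$) lands in $\Sab$ by Lemma~\ref{g(w)}; moreover $s((x_k^- v\bar v\cdots)_k^-)=\e$ still holds, so the image lies in $E_{[k,l]}^{\ast,n}(v)$ as claimed.

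Next I would track how many coordinates move under $f$. Writing $f(x_k^-\widetilde v x_l^+)=x_k^- v {x'}_l^+$, factor the change through the intermediate sequence $x_k^-\widetilde v {x'}_l^+$: by Lemma~\ref{PS2.8a}(i), $x_k^-\widetilde v x_l^+$ and $x_k^-\widetilde v {x'}_l^+$ differ at most at $\Zab(v)+3$ coordinates; by Lemma~\ref{PS2.8a}(ii) (i.e.\ Remark~\ref{w^coor} applied to $v$ versus $\widetilde v$), $x_k^-\widetilde v{x'}_l^+$ and $x_k^- v {x'}_l^+$ differ at most at three more. Hence the total number of differing coordinates is at most $\Zab(v)+6$. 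Then Lemma~\ref{PS2.4}, with $\Lambda=[-n,n]$ and $\Gamma$ the set of these coordinates, gives
\[
\left|\sum_{j\in[-n,n]}\bigl(\varphi(\s^j x)-\varphi(\s^j f(x))\bigr)\right|\le(\Zab(v)+6)\|\varphi\|_\delta,
\]
so that $\exp(\sum_{j}\varphi(\s^j f(x)))\ge e^{-(\Zab(v)+6)\|\varphi\|_\delta}\exp(\sum_{j}\varphi(\s^j x))$ for every $x\in E_{[k,l]}^{\ast,n}(\widetilde v)$.

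Finally I would sum over $x\in E_{[k,l]}^{\ast,n}(\widetilde v)$ and use the multiplicity bound. Since $f$ is at most $|\A|^{\Zab(v)+3}$-to-one (Lemma~\ref{PS2.8a}(iii)), each element of $E_{[k,l]}^{\ast,n}(v)$ is counted at most $|\A|^{\Zab(v)+3}$ times in $\sum_{x}\exp(\sum_j\varphi(\s^j f(x)))$, whence
\[
\Xi_{[k,l]}^{\ast,n}(\widetilde v)\le e^{(\Zab(v)+6)\|\varphi\|_\delta}\sum_{x\in E_{[k,l]}^{\ast,n}(\widetilde v)}\exp\!\Bigl(\sum_{j\in[-n,n]}\varphi(\s^j f(x))\Bigr)\le |\A|^{\Zab(v)+3}e^{(\Zab(v)+6)\|\varphi\|_\delta}\,\Xi_{[k,l]}^{\ast,n}(v),
\]
which rearranges to the asserted inequality. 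The one point needing a little care—and the main obstacle—is the boundary case $\Zab(v)+2>n-l$, where $\bar v$ does not fit inside $[-n,n]$: there one truncates $\bar v$ to its first $n-l$ letters, sets position $n+1$ to the appropriate $g$-value and the rest to $1$, and must recheck using Lemma~\ref{g(w)} that the result is still in $\Sab$ and in $E_{[k,l]}^{\ast,n}(v)$ while the coordinate and multiplicity counts are only improved; the definitions in Lemma~\ref{PS2.8a} are already set up to handle this, so it is routine bookkeeping rather than a genuine difficulty.
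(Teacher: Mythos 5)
Your proposal is correct and follows essentially the same route as the paper: the intended proof also uses the map $f$ of Lemma~\ref{PS2.8a}, bounds the change of $\sum_{j\in[-n,n]}\varphi(\s^j\cdot)$ by $(\Zab(v)+3)\|\varphi\|_\delta+3\|\varphi\|_\delta$ via the intermediate configuration $x_k^-\widetilde{v}{x'}_l^+$ (which is the same $(\Zab(v)+6)\|\varphi\|_\delta$ bound you obtain from Lemma~\ref{PS2.4}), and then sums over the fibers of the at most $|\A|^{\Zab(v)+3}$-to-one map $f$ to get $\Xi_{[k,l]}^{\ast,n}(\widetilde{v})\le|\A|^{\Zab(v)+3}e^{(\Zab(v)+6)\|\varphi\|_\delta}\Xi_{[k,l]}^{\ast,n}(v)$. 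Your extra remarks on well-definedness and on the boundary case $\Zab(v)+2>n-l$ are consistent with how the paper sets up $f$ in Lemma~\ref{PS2.8a}, so there is no gap.
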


\color{black}
\begin{proof}[\bf{Sketch of the proof}]
In the proof of \cite[Lemma 2.8]{PS1}, the lower bound is determined by the maximum number of preimages under the map $f$, which is $|\A|^{(z^\b(v)+1)}$, and the number of coordinates where $x$ and $f(x)$ differ, which is at most $z^\b(v)+2$.
To modify the definition of $f$ as in Lemma~\ref{PS2.8a}, we should change $|\A|^{(z^\b(v)+1)}$ to $|\A|^{(\Zab(v)+3)}$ (Lemma~\ref{PS2.8a}(iii)) and replace $(z^\b(v)+2)\|\varphi\|_\delta$ with $(\Zab(v)+6)\|\varphi\|_\delta$ by taking into account the difference in the number of characters between $x$ and $f(x)$ (Lemma~\ref{PS2.8a}(i), (ii)).
\end{proof}

\color{black}

\color{black}
\begin{lem}\label{PS3.1}
Let $\bar{u}\in\Lab_m$. 
For any $\e>0$ and $\varphi\in\B(\Sab)$ 
there exists $N_{\e, \varphi}$ so that if 
$J_{[1, m]}(y)=\bar{u}$ and $m\ge N_{\e, \varphi}$, then
$$\nu(I_{\bar{u}}) \le K_{\varphi, \e}^{+}(m, \b, \k)\exp\biggl(\sum_{j=1}^{m}\varphi(\s^j y)-mp(\varphi)\biggr),$$
where
$$K_{\varphi, \e}^{+}(m, \b, \k)=(\M)e^{11\|\varphi\|_\delta}e^{5m\e}.$$
$N_{\e, \varphi}$ chosen so that all of the inequalities \eqref{7}, \eqref{8} and \eqref{23} are satisfied for $m\ge N_{\e, \varphi}$.
\end{lem}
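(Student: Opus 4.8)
The plan is to work from the identity \eqref{17}, which writes $\nu(I_{\bar u})$ as a Ces\`aro limit of ratios $\Xi_{[k,l]}^n(\bar u)\big/\sum_{v\in\Lab_m}\Xi_{[k,l]}^n(v)$. It suffices to bound each such ratio with $[k,l]\subset[-n,n]$, $k=j$ and $l=k+m-1$, by $K_{\varphi,\e}^{+}(m,\b)\exp\big(\sum_{j=1}^{m}\varphi(\s^j y)-mp(\varphi)\big)$, since the $O(m)$ boundary terms (those $j$ for which $[j,j+m-1]\not\subset[-n,n]$) are each $\le 1$ and contribute $(m-1)/(2n+1)\to 0$ to the average as $n\to\infty$.

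Fix such an interval $[k,l]$. First I would bound the denominator below by discarding all but the neutral words and passing to the starred partition functions, $\sum_{v}\Xi_{[k,l]}^n(v)\ge\sum_{v:s(v)=\e}\Xi_{[k,l]}^{\ast,n}(v)$, and I would bound the numerator above by Lemma~\ref{PS2.7}, $\Xi_{[k,l]}^n(\bar u)\le(\M)e^{7\|\varphi\|_\delta}\Xi_{[k,l]}^{\ast,n}(\widetilde{\bar{u}})$, noting that $s(\widetilde{\bar{u}})=\e$ by Lemma~\ref{w^}(iii) and Lemma~\ref{00}. Since both $\widetilde{\bar{u}}$ and every admissible $v$ in the remaining sum are now neutral, the map $x_k^-\,v\,x_l^+\mapsto x_k^-\,\widetilde{\bar{u}}\,x_l^+$ is, up to the single coordinate $x_{n+1}$ prescribed by the rule $x_{n+1}=g(\cdot)$ exactly as in Lemma~\ref{PS2.7a}, a bijection between $E_{[k,l]}^{\ast,n}(v)$ and $E_{[k,l]}^{\ast,n}(\widetilde{\bar{u}})$: once a word sits at vertex $\vtx(\cdot)=[0,0]$, its admissible left tails and right continuations no longer depend on the word itself. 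This renders the two starred partition functions comparable term by term.

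It then remains to estimate, under this pairing, the difference of the exponents $\sum_{j\in[-n,n]}\varphi(\s^j\cdot)$. Splitting $[-n,n]$ into $[k,l]$ and its complement and inserting the canonical completions $v^\sharp$, $\widetilde{\bar{u}}^\sharp$ and $\bar{u}^\sharp$ from \eqref{16}, I would bound the contribution away from $[k,l]$ and the contribution of each replacement of a tail by its $\sharp$-completion by $m\e$ via Lemma~\ref{PS2.5} (using \eqref{7} and \eqref{8}), the finitely many coordinate changes in $\bar{u}^\sharp\to\widetilde{\bar{u}}^\sharp$ by $4\|\varphi\|_\delta$ via Lemma~\ref{PS2.4} and Lemma~\ref{PS2.9a}, and — for $y$ with $J_{[1,m]}(y)=\bar u$ — the passage from $\sum_{j\in[k,l]}\varphi(\s^j\bar{u}^\sharp)$ to $\sum_{j=1}^m\varphi(\s^j y)$ by a further $m\e$. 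Applying the elementary inequality $\inf_i(a_i/b_i)\le(\sum_i a_i)/(\sum_i b_i)\le\sup_i(a_i/b_i)$ to the term-by-term comparison, and then invoking \eqref{23} to replace $\sum_{v:s(v)=\e}\exp\sum_{j\in[k,l]}\varphi(\s^j v^\sharp)$ by $e^{mp(\varphi)}$ up to $e^{\pm m\e}$, I obtain $\sum_v\Xi_{[k,l]}^n(v)\big/\Xi_{[k,l]}^n(\bar u)\ge(K_{\varphi,\e}^{+}(m,\b))^{-1}\exp\big(mp(\varphi)-\sum_{j=1}^m\varphi(\s^j y)\big)$, the exponent $11=7+4$ collecting the loss in Lemma~\ref{PS2.7} and the change $\bar{u}^\sharp\to\widetilde{\bar{u}}^\sharp$, and $5m\e$ collecting the three oscillation errors and the pressure error from \eqref{23}. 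Averaging over $[k,l]$ and letting $n\to\infty$ removes the boundary terms and yields the stated bound.

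I expect the main difficulty to be the error bookkeeping: organising the telescoped differences of $\varphi$-sums so that each piece is covered by \eqref{7}, \eqref{8}, or a change in boundedly many coordinates, and verifying the term-by-term bijection carefully — it rests on the equivalence $s(w)=\e\iff\vtx(w)=[0,0]$ (Lemma~\ref{00}) and on the precise way the auxiliary coordinate $x_{n+1}=g(x_{[-n,n]})$ in the definition of $E^n$ responds to a change of the block $x_{[k,l]}$. With these in place the argument runs parallel to the corresponding upper bound in \cite{PS1}; the subsequent extension from a unique tangent functional to a general equilibrium measure $\nu\in\partial p(\varphi)$ is then standard, as in \cite{PS1}.
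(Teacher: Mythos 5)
Your proposal follows essentially the same route as the paper's proof: the reduction via \eqref{17} with boundary terms contributing $(m-1)/(2n+1)$, the upper bound on the numerator from Lemma~\ref{PS2.7} together with the lower bound $\sum_{v:s(v)=\e}\Xi^{\ast,n}_{[k,l]}(v)$ on the denominator, the identification \eqref{19} of left tails and right continuations for neutral blocks, the inf/sup inequality \eqref{24}, the telescoped oscillation estimates through $v^\sharp$, $\bar{u}^\sharp$, $\widetilde{\bar{u}}^\sharp$ via Lemmas~\ref{PS2.4}, \ref{PS2.5} and \ref{PS2.9a}, and finally \eqref{23}, yielding exactly the constants $e^{11\|\varphi\|_\delta}$ ($=7+4$) and $e^{5m\e}$. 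This matches the paper's argument (your summary phrase ``three oscillation errors'' slightly undercounts the four $m\e$-terms plus the pressure error, but your stated total $5m\e$ and the overall bookkeeping are correct).
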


\begin{proof}
Let $\e>0$, $\varphi\in\B(\Sab)$, and let $N_{\e, \varphi}$ be defined as above. We consider the reciprocal of the term in the sum \eqref{17} with $j=k$ and $[k, l]\subset[-n, n]$. By Lemma~\ref{PS2.7},
\begin{align*}
\frac{\Xi^n(\varphi)}{\Xi_{[k, l]}^n(\bar{u})}
& =\frac{\sum\nolimits_{v\in\Lab_m}\Xi_{[k, l]}^n(v)}{\Xi_{[k, l]}^n(\bar{u})}
\ge\frac{\sum\nolimits_{v:s(v)=\e}\Xi_{[k, l]}^{\ast, n}(v)}{\Xi_{[k, l]}^n(\bar{u})}\\
& \ge\frac{1}{(\M)e^{7\|\varphi\|_\delta}}\sum_{v:s(v)=\e}\frac{\Xi_{[k, l]}^{\ast, n}(v)}{\Xi_{[k, l]}^{\ast, n}(\widetilde{\bar{u}})}.
\end{align*}

Since $s(v)=\e$ and $s(\widetilde{\bar{u}})=\e$, we have
\begin{align}\label{19}
\mathcal{W}
& =\{(x_k^-, x_l^+)|x_k^-vx_l^+\in E_{[k, l]}^{\ast, n}(v)\}
=\{(x_k^-, x_l^+)|x_k^-\widetilde{\bar{u}}x_l^+\in E_{[k, l]}^{\ast, n}(\widetilde{\bar{u}})\}.
\end{align}
If we set $\Xi_{[k, l]}^{\ast, n}(v)=\sum_{w\in\mathcal{W}}a_w$ and $\Xi_{[k, l]}^{\ast, n}(\widetilde{\bar{u}})=\sum_{w\in\mathcal{W}}b_w$, then
\begin{align}\label{24}
\inf_{w\in\mathcal{W}}\frac{a_w}{b_w}\le\frac{\sum_{w\in\mathcal{W}}a_w}{\sum_{w\in\mathcal{W}}b_w}=\frac{\sum_{w\in\mathcal{W}}(a_w/b_w)b_w}{\sum_{w\in\mathcal{W}}b_w}\le\sup_{w\in\mathcal{W}}\frac{a_w}{b_w}.
\end{align}
Hence we have
\begin{align}\label{inf}
\frac{\Xi_{[k, l]}^{\ast, n}(v)}{\Xi_{[k, l]}^{\ast, n}(\widetilde{\bar{u}})}
\ge\inf_{(x_k^-, x_l^+)\in\mathcal{W}}\exp\left(\sum_{j=-n}^n\varphi(\s^j(x_k^-vx_l^+))-\varphi(\s^j(x_k^-\widetilde{\bar{u}}x_l^+))\right).
\end{align}
Let $v^\sharp$, ${\widetilde{\bar{u}}}^\sharp$ be defined as in \eqref{16} and $y\in\Sab$ with $J_{[1, m]}(y)=\bar{u}$. Then $J_{[k, l]}(\s^{-k+1}y)=\bar{u}=J_{[k, l]}({\bar{u}}^\sharp)$ and  
\begin{align*}
\sum_{j=-n}^n&(\varphi(\s^j(x_k^+vx_l^+))-\varphi(\s^j(x_k^+\widetilde{\bar{u}}x_l^+)))\\
& =\sum_{j\in[-n, k)\cup(l, n]}(\varphi(\s^j(x_k^+vx_l^+))-\varphi(\s^j(x_k^+\widetilde{\bar{u}}x_l^+)))\\
& \quad+\sum_{j\in[k, l]}(\varphi(\s^j(x_k^+vx_l^+))-\varphi(\s^jv^\sharp))+\sum_{j\in[k, l]}\varphi(\s^jv^\sharp)-\sum_{j\in[1, m]}\varphi(\s^jy)\\
& \quad+\sum_{j\in[k, l]}(\varphi(\s^j\s^{-k+1}y)-\varphi(\s^j\bar{u}^\sharp))+\sum_{j\in[k, l]}(\varphi(\s^j\bar{u}^\sharp)-\varphi(\s^j{\widetilde{\bar{u}}}^\sharp))\\
& \quad\quad+\sum_{j\in[k, l]}(\varphi(\s^j{\widetilde{\bar{u}}}^\sharp)-\varphi(\s^j(x_k^+\widetilde{\bar{u}}x_l^+))).
\end{align*}
By Lemma~\ref{PS2.4} and Lemma~\ref{PS2.5}, if $m\ge N_{\e, \varphi}$, then
\begin{align}
& \left|\sum_{j=-n}^n(\varphi(\s^j(x_k^+vx_l^+))-\varphi(\s^j(x_k^+\widetilde{\bar{u}}x_l^+)))-\left(\sum_{j\in[k, l]}\varphi(\s^jv^\sharp)-\sum_{j\in[1, m]}\varphi(\s^jy)\right)\right|\nonumber\\
& \le4m\e+4\|\varphi\|_\delta.\label{ine}
\end{align}

So we get for $[k, l]\subset[-n, n]$,
\begin{align*}
\sum_{v:s(v)=\e}\frac{\Xi_{[k, l]}^{\ast, n}(v)}{\Xi_{[k, l]}^{\ast, n}(\widetilde{\bar{u}})}
& \ge e^{-4m\e-4\|\varphi\|_\delta}\exp\left(-\sum_{j=1}^m\varphi(\s^jy)\right)\sum_{v:s(v)=\e}\exp\left(\sum_{j=k}^l\varphi(\s^jv^\sharp)\right)\\
& \ge e^{-5m\e-4\|\varphi\|_\delta}\exp\left(-\sum_{j=1}^m\varphi(\s^jy)+mp(\varphi)\right)
\end{align*}
by \eqref{inf}, \eqref{ine} and \eqref{23}.
Therefore
\begin{align*}
\frac{\sum_v\Xi_{[k, l]}^n(v)}{\Xi_{[k, l]}^n(\bar{u})}
\ge\frac{e^{-5m\e}}{(\M)e^{11\|\varphi\|_\delta}}\exp\left(-\sum_{j=1}^m\varphi(\s^jy)+mp(\varphi)\right)
\end{align*}
and the result follows from \eqref{17} by taking the limit $n\to\infty$.
\end{proof}

\begin{lem}\label{PS3.2}
Let $\bar{u}\in\Lab_m$. For any $\e>0$ and $\varphi\in\B(\Sab)$ 
there exists $N_{\e, \varphi}$ so that if 
$J_{[1, m]}(y)=\bar{u}$ and $m\ge N_{\e, \varphi}$, then
$$\nu(I_{\bar{u}}) \ge K_{\varphi, \e}^{-}(m, \b, \k, \bar{u})\exp\biggl(\sum_{j=1}^{m}\varphi(\s^j y)-mp(\varphi)\biggr),$$
where
$$K_{\varphi, \e}^{-}(m, \b, \k, \bar{u})=\frac{|\A|^{(-\Zab(\bar{u})+3)}e^{-(\Zab(\bar{u})+5)\|\varphi\|_\delta}}{(\M)^2e^{5m\e+8\|\varphi\|_\delta}}$$
$N_{\e, \varphi}$ chosen so that all of the inequalities \eqref{7}, \eqref{8} and \eqref{23} are satisfied for $m\ge N_{\e, \varphi}$.
\end{lem}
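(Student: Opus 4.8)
The plan is to estimate the ratio $\Xi^n(\varphi)/\Xi_{[k,l]}^n(\bar u)$ from above; this is the mirror image of the lower bound for the same ratio established in the proof of Lemma~\ref{PS3.1}. As there, I would fix $\e>0$ and $\varphi\in\B(\Sab)$, choose $N_{\e,\varphi}$ so that \eqref{7}, \eqref{8} and \eqref{23} all hold for $m\ge N_{\e,\varphi}$, and---invoking \eqref{17}---treat only the terms with $[k,l]\subset[-n,n]$ and $l=k+m-1$, writing $\Xi^n(\varphi)/\Xi_{[k,l]}^n(\bar u)=\bigl(\sum_{v\in\Lab_m}\Xi_{[k,l]}^n(v)\bigr)/\Xi_{[k,l]}^n(\bar u)$.

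The first step is two reductions. For the numerator, Lemma~\ref{PS2.7} gives $\Xi_{[k,l]}^n(v)\le(\M)e^{7\|\varphi\|_\delta}\Xi_{[k,l]}^{\ast,n}(\widetilde v)$ for every $v\in\Lab_m$; since $s(\widetilde v)=\e$ always and $v\mapsto\widetilde v$ is at most $(\M)$-to-one by Lemma~\ref{PS2.2}(v), summing over $v$ yields
\[
\sum_{v\in\Lab_m}\Xi_{[k,l]}^n(v)\le(\M)^2e^{7\|\varphi\|_\delta}\sum_{v:s(v)=\e}\Xi_{[k,l]}^{\ast,n}(v).
\]
For the denominator, the new lower comparison Lemma~\ref{PS2.8} gives
\[
\Xi_{[k,l]}^n(\bar u)\ge\Xi_{[k,l]}^{\ast,n}(\bar u)\ge|\A|^{-(\Zab(\bar u)+3)}e^{-(\Zab(\bar u)+6)\|\varphi\|_\delta}\Xi_{[k,l]}^{\ast,n}(\widetilde{\bar u});
\]
this is the step at which the quantity $\Zab(\bar u)$, governing the dichotomy of Theorem~\ref{main}, enters the constant. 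Together these bounds reduce the problem to estimating $\sum_{v:s(v)=\e}\Xi_{[k,l]}^{\ast,n}(v)\big/\Xi_{[k,l]}^{\ast,n}(\widetilde{\bar u})$ from above.

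For that ratio I would rerun the comparison from the proof of Lemma~\ref{PS3.1} with the inequalities reversed. Since $s(v)=\e=s(\widetilde{\bar u})$, the set of admissible pairs $(x_k^-,x_l^+)$ occurring in \eqref{19} is common to $v$ and to $\widetilde{\bar u}$, so by the elementary inequality \eqref{24} the ratio is bounded above by $\sup_{(x_k^-,x_l^+)}\exp\bigl(\sum_{j=-n}^n\varphi(\s^j(x_k^-vx_l^+))-\varphi(\s^j(x_k^-\widetilde{\bar u}x_l^+))\bigr)$. Telescoping this exponent through $v^\sharp$, $\bar u^\sharp$, $\s^{-k+1}y$ and $\widetilde{\bar u}^\sharp$ exactly as in Lemma~\ref{PS3.1}, using Lemmas~\ref{PS2.4} and \ref{PS2.5} together with Lemma~\ref{PS2.9a}(i) (that $w^\sharp$ and $\widetilde w^\sharp$ differ at at most four coordinates), gives for $m\ge N_{\e,\varphi}$ a bound of the shape $e^{4m\e+c\|\varphi\|_\delta}\exp\bigl(\sum_{j=k}^l\varphi(\s^jv^\sharp)-\sum_{j=1}^m\varphi(\s^jy)\bigr)$, cf.\ \eqref{ine}; summing over $\{v:s(v)=\e\}$ and then applying \eqref{23} to replace $\sum_{v:s(v)=\e}\exp\sum_{j=k}^l\varphi(\s^jv^\sharp)$ by $e^{m\e}e^{mp(\varphi)}$ produces
\[
\frac{\Xi_{[k,l]}^n(\bar u)}{\Xi^n(\varphi)}\ge K_{\varphi,\e}^{-}(m,\b,\bar u)\exp\Bigl(\sum_{j=1}^m\varphi(\s^jy)-mp(\varphi)\Bigr),
\]
with $K_{\varphi,\e}^{-}(m,\b,\bar u)$ of the stated form once the factors $(\M)$, $|\A|^{\Zab(\bar u)+3}$, $e^{c\|\varphi\|_\delta}$ and $e^{cm\e}$ are collected. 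Substituting into \eqref{17}, discarding the finitely many boundary terms (all nonnegative), and letting $n\to\infty$ would give the claimed lower bound for $\nu(I_{\bar u})$.

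The step I expect to be the real obstacle is not conceptual but the bookkeeping: verifying that the product of all the multiplicative errors accrued from Lemmas~\ref{PS2.4}, \ref{PS2.5}, \ref{PS2.7}, \ref{PS2.8}, \ref{PS2.2} and \ref{PS2.9a} collapses exactly into $K_{\varphi,\e}^{-}(m,\b,\bar u)$. Conceptually the proof is a line-by-line adaptation of that of Lemma~\ref{PS3.1}; the single genuinely new ingredient is the lower estimate Lemma~\ref{PS2.8}, which is exactly what forces the constant---hence the whole argument---to depend on $\Zab(\bar u)$.
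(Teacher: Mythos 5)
Your proposal follows essentially the same route as the paper's proof: bound $\Xi^n(\varphi)/\Xi^n_{[k,l]}(\bar u)$ from above by using Lemma~\ref{PS2.7} together with the $(\M)$-to-one property of $v\mapsto\widetilde v$ for the numerator, Lemma~\ref{PS2.8} for the denominator (which is where $\Zab(\bar u)$ enters), and then the sup-side of \eqref{24} with \eqref{19}, \eqref{ine} and \eqref{23} to control $\sum_{v:s(v)=\e}\Xi^{\ast,n}_{[k,l]}(v)/\Xi^{\ast,n}_{[k,l]}(\widetilde{\bar u})$, before passing to the limit in \eqref{17}. The only delicate point, as you anticipate, is the bookkeeping of the constants in $K^{-}_{\varphi,\e}$, and there the paper's own exponents are not perfectly consistent either, so your argument is sound.
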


\begin{proof}
Let $\e>0$ and $\varphi$ be given and $N_{\e, \varphi}$ defined as above.
We consider \eqref{17} with $j=k$ and $[k, l]\subset[-n, n]$.
By Lemma~\ref{PS2.2}, \ref{PS2.7} and \ref{PS2.8},
\begin{align*}
\frac{\Xi^n(\varphi)}{\Xi_{[k, l]}^n(\bar{u})}
& =\frac{\sum\nolimits_{v\in\Lab_m}\Xi_{[k, l]}^n(v)}{\Xi_{[k, l]}^n(\bar{u})}
\le\frac{\sum\nolimits_{v\in\Lab_m}\Xi_{[k, l]}^n(v)}{\Xi_{[k, l]}^{\ast, n}(\bar{u})}\\
& \le\frac{(\M)e^{7\|\varphi\|_\delta}}{|\A|^{(-\Zab(\bar{u})+3)}e^{-(\Zab(\bar{u})+5)\|\varphi\|_\delta}}\sum_{v\in\Lab_m}\frac{\Xi_{[k, l]}^{\ast, n}(\widetilde{v})}{\Xi_{[k, l]}^{\ast, n}(\widetilde{\bar{u}})}\\
& \le\frac{(\M)^2e^{7\|\varphi\|_\delta}}{|\A|^{(-\Zab(\bar{u})+3)}e^{-(\Zab(\bar{u})+5)\|\varphi\|_\delta}}\sum_{v:s(v)=\e}\frac{\Xi_{[k, l]}^{\ast, n}(v)}{\Xi_{[k, l]}^{\ast, n}(\widetilde{\bar{u}})}
\end{align*}
As in the proof of Lemma~\ref{PS3.1}, if $s(v)=\e$, then we can estimate the ratio $\Xi_{[k, l]}^{\ast, n}(v)/\Xi_{[k, l]}^{\ast, n}(\widetilde{\bar{u}})$ from above by replacing the infimum in \eqref{inf} by the supremum since \eqref{19} and \eqref{24} hold. Let $y\in\Sab$ be $J_{[1, m]}(y)=\bar{u}$. If $m\ge N_{\e, \varphi}$,  then
\begin{align*}
\sum_{v:s(v)=\e}\frac{\Xi_{[k, l]}^{\ast, n}(v)}{\Xi_{[k, l]}^{\ast, n}(\widetilde{\bar{u}})}
& \le e^{5m\e+4\|\varphi\|_\delta}\exp\left(-\sum_{j\in[1, m]}\varphi(\s^jy)+mp(\varphi)\right)
\end{align*}
by \eqref{ine} and \eqref{23}. Hence
\begin{align*}
\frac{\sum_v\Xi_{[k, l]}^n(v)}{\Xi_{[k, l]}^n(\bar{u})}
& \le\frac{(\M)^2e^{5m\e+8\|\varphi\|_\delta}}{|\A|^{(-\Zab(\bar{u})+3)}e^{-(\Zab(\bar{u})+5)\|\varphi\|_\delta}}\exp\left(\sum_{j=1}^m-\varphi(\s^jy)+mp(\varphi)\right).
\end{align*}
The result follows from \eqref{17} by taking the limit $n\to\infty$.
\end{proof}

By the same argument as in the proof of Pfister and Sullivan,  Lemmas~\ref{PS3.1} and \ref{PS3.2} can be extended to $\varphi$ for which the uniqueness of equilibrium measures does not hold. So the proofs are omitted.

\def\Zabn{\bar{z}^{\a, \b}(n)}

\subsection{Proof of Theorem~\ref{main}}\label{3.2}
The proof is similar to the proof of Pfister and Sullivan for \cite[Theorem 2.12]{PS1}. We can prove the theorem similarly using Lemma~\ref{PS3.1} and \ref{PS3.2}. If $\lim_{n\to\infty}\Zabn/n=0$, then $m^{-1}\ln K_{\varphi, \e}^-(m, \beta, \bar{u})>-6\e$ for sufficiently large $m$. Since $m^{-1}\ln K_{\varphi, \e}^+(m, \beta)<6\e$ for large $m$ the inequality \eqref{weakGibbs} holds. 

On the other hand, if $\limsup_{n\to\infty}\Zabn/n>0$, then there exist a sequence $\{m_k\}_k$ and $w^k\in\Lab_{m_k}$ such that $\lim_k\Zab(w^k)/m_k=a>0$. If we set 
\begin{align*}
\underline{w}^k
\coloneq&\,w^k a_{k_1(w^k)+1}\cdots a_{k_1(w^k)+\Zab(w^k)}\\
=&
\begin{cases}
w^k\textcolor{black}{0\k\cdots\k}\quad\text{if}\,\, k_1(w^k)=0,\\
w^k\textcolor{black}{\k\k\cdots\k}\quad\text{if}\,\, k_1(w^k)>0,
\end{cases}
\end{align*}
then $\nu(I_{w^k})=\nu(I_{\underline{w}^k})$ and $|\underline{w}^k|=m_k+\Zab(w^k)$.
If $\Zab(w^k)$ is large enough and $y\in[w^k]$, then
\begin{align*}
&\left|\sum_{j=1}^{\Zab(w^k)}\varphi(\s^{m_k+j}y)-\Zab(w^k)\varphi(\k)\right|\\
&\le\left|\sum_{j=2}^{\Zab(w^k)}\left(\varphi(\s^{m_k+j}y)-\varphi(\k)\right)\right|+\left|\varphi(\s^{m_k+1}y)-\varphi(\k)\right|\\
&\le\e\Zab(w^k)+2\|\varphi\|\le2\e\Zab(w^k),
\end{align*}
where $\k$ is the all-$\k$ configuration.
Let $x\in\Sab$ be $J_{[1, m_k]}(x)=w^k$. 
As in the proof of \cite[Theorem 2.12]{PS1} we can deduce from $\nu(I_{w^k})=\nu(I_{\underline{w}^k})$ that 
\begin{align*}
\lim_{k\to\infty}&\frac{1}{m_k}\left(\ln\nu(I_{w^k})-\sum_{j=1}^{m_k}\psi(\s^jx)\right)\\
&\le\lim_{k\to\infty}\frac{\Zab(w^k)}{m_k}\underbrace{(7\e+\varphi(\k)-p(\varphi))}_{<0\,\,\text{if}\,\,\e\,\,\text{is small enough}}<0.
\end{align*}

\color{black}
\subsection*{Acknowledgement}
This work was supported by JST SPRING, Grant Number JPMJSP2127.

\color{black}

\end{document}